\newtheorem{definition}{Definition}
\newtheorem{lemma}[definition]{Lemma}
\newtheorem{proposition}[definition]{Proposition}
\newtheorem{example}[definition]{Example}
\newtheorem{corollary}[definition]{Corollary}
\newtheorem{theorem}[definition]{Theorem}
\newtheorem{remark}[definition]{Remark}
\newcommand{\N}{\mathbb{N}}
\newcommand{\Z}{\mathbb{Z}}
\newcommand{\til}{\widetilde}
\newcommand{\Id}{{\mathrm{Id}}}
\newcommand{\Card}{{\mathrm{Card\,}}}
\newcommand{\Lst}{\textsc{Lst}}
\newcommand{\Fst}{\textsc{Fst}}
\newcommand{\LCS}{\textsc{lcs}}
\newcommand{\LCP}{\textsc{lcp}}
\newcommand{\alphabet}{\mathcal A}
\newcommand{\B}{\mathcal B}
\newcommand{\bu}{{\bf u}}
\newcommand{\bw}{{\bf w}}
\renewcommand{\L}{\mathcal L}
\newcommand{\emptyword}{\varepsilon}
\renewcommand{\P}{\mathcal P}
\begin{document}

\title{Palindromic sequences generated from marked morphisms
    \thanks{European Journal of Combinatorics (2016) 200--214.
    \tt{http://dx.doi.org/10.1016/j.ejc.2015.05.006}}
}

\author{{\sc S\'ebastien Labb\'e
\quad and \quad
Edita Pelantov\'a}\\  \\
\small LIAFA, Universit\'e Paris Diderot - Paris 7,\\ [-0.6ex]
\small Case 7014, 75205 Paris Cedex 13, France\\ [-0.6ex]
\small \tt labbe@liafa.univ-paris-diderot.fr\\ [.9ex]
\small Czech Technical University in Prague\\[-0.6ex]
\small Trojanova 13, 120 00 Praha 2, Czech Republic\\[-0.6ex]
\small \tt edita.pelantova@fjfi.cvut.cz}

\date{}

\maketitle

\begin{abstract}
Fixed points $\bu=\varphi(\bu)$ of marked and primitive morphisms
$\varphi$ over arbitrary alphabet are considered.
We show that if $\bu$ is palindromic, i.e., its language contains infinitely
many palindromes, then some power of $\varphi$ has a
conjugate in class~$\P$.
This class was introduced by Hof, Knill and Simon (1995) in order to study palindromic
morphic words.
Our definitions of marked and well-marked morphisms are more general than the
ones previously used by Frid (1999) or Tan (2007).
As any morphism with an aperiodic fixed point over a binary alphabet is
marked, our result generalizes the result of Tan.
Labb\'e (2014) demonstrated that already over a ternary alphabet the property of
morphisms to be marked is important for the validity of our theorem.
The main tool used in our proof is the description of bispecial factors in
fixed points of morphisms provided by Klouda (2012).

Keywords: palindrome, complexity, fixed point, class P, marked morphism.

2000 MSC: 68R15, 37B10
\end{abstract}

\section{Introduction}

In 1995,   Hof, Knill and Simon studied  the  spectral properties
of Schroedinger operators associated with one-dimensional structure
having  finite local complexity. Such structure can be coded by an
infinite word over a finite alphabet. Hof, Knill and Simon showed
that  if the  word contains infinitely  many distinct palindromes, then the
operator has  a purely singular continuous spectrum. A prominent
example of such an infinite word is the Thue-Morse word over the
binary alphabet $\{0,1\}$. This word can be obtained by the rewriting
rules:  ``$0$ replaced by  $01$" and  ``$1$ replaced by
$10$". If we start with the letter $0$ and repeat these rules,  we
get
$$ 0\mapsto 01 \mapsto 0110 \mapsto 01101001 \mapsto  01101001
10010110\mapsto  \cdots$$ The finite string we obtained in the
$i^{th}$ step is a prefix of the string we obtained in the
${(i+1)}^{st}$ step. The Thue-Morse word   is defined as the unique
infinite word  ${\bf{u}_{TM}}$  having the property that  $i^{th}$
string in our construction is a  prefix of  ${\bf{u}_{TM}}$ for each
positive integer $i$.

In Combinatorics on words, the previous construction is formalized
by using the notions {\em morphism} of  a free monoid and {\em
fixed point} of a morphism (the formal definition is provided
below). In such terminology the Thue-Morse word is a fixed point of
the morphism $\varphi_{TM}$ defined by the images of both letters in the alphabet,
namely by  $\varphi_{TM}(0)= 01$ and $\varphi_{TM}(1)= 10$. It is a
well known fact that the language of ${\bf{u}_{TM}}$, i.e., the set
of all finite strings occurring in ${\bf{u}_{TM}}$, contains
infinitely many palindromes.

In \cite{MR1372804},  a  class of  morphisms  is
 introduced: a morphism $ \varphi$ over an  alphabet $\mathcal{A}$  belongs to
 class $\P$  if  $\varphi$  has  the form
$\varphi(a)= pq_a$, where $p$ is a palindrome  and $q_a$ are
palindromes for all letters $a\in \mathcal{A}$. Hof,
Knill and Simon observed  that any fixed point of a morphism
from class $\P$  is {\em  palindromic}, i.e., infinitely many
distinct palindromes occur in the fixed point.

 In Remark 3 of
\cite{MR1372804}, the question ``Are there (minimal) sequences
containing arbitrarily long palindromes that arise from morphism
none of which  belongs to class $\P$?" is formulated.
  To guarantee that  the generated sequence is minimal (in terminology of the symbolic dynamic),  already  Hof, Knill and Simon considered primitive  morphisms only. Let us recall that a morphism $\varphi$ over an alphabet $\mathcal{A}$ is called primitive, if there exists a power $k$ such that each letter $b \in \mathcal{A}$  occurs in the  word $\varphi^k(a)$ for  any letter $a \in \mathcal{A}$. A morphism can have more fixed points. For example, $\varphi_{TM}$ has two fixed points. It  is easy to see that languages of all fixed points of a primitive morphism  coincide.

In 2003, Allouche et al. demonstrated that any  periodic palindromic
sequence is a fixed point of a morphism in class $\P$ \cite[Theorem
13]{MR1964623}.  The Thue-Morse word is palindromic but not
periodic. The morphism $\varphi_{TM}$
which generates the Thue-Morse word does not belong to  class
$\P$. Since  the Thue-Morse word is a fixed point of $\varphi_{TM}$,
it is a fixed point of the second iteration $\varphi^2_{TM}$  as
well. But the second iteration described by $\varphi_{TM}^2( 0) =
0110$ and $\varphi_{TM}^2(1)= 1001$ is in class $\P$. In
particular, the empty word serves as the palindrome $p$ and both
$q_0= 0110$, $q_1 =1001$ are palindromes as well.

The second famous example of an infinite palindromic word is  the
Fibonacci word ${\bf{u_{F}}}$. It is the fixed point of the
morphism  $\varphi_F(0)=01$ and $\varphi_F(1)=0$. Applying
$\varphi_F$ at the starting letter $0$, we get
$$0\mapsto 01\mapsto 010 \mapsto 01001 \mapsto 01001010 \mapsto  01001010 01001
\mapsto01001010 01001 01001010\cdots $$ Neither the Fibonacci morphism
$\varphi_F$
 nor its square $\varphi^2_F$ given by $\varphi^2_F(0)=010$ and
 $\varphi^2_F(1)=01$  belongs to class $\P$.
Images of both letters $0$ and $1$ under $\varphi^2_F$ start with
the same letter $0$. If we cyclically shift these images by one
letter
 to the left, i.e.,  we move the starting $0$ from the beginning
to the end,  we get the morphism given by $\psi_F(0)=100$ and
$\psi_F(1)=10$. The  morphism $\psi_F$ is already in class
$\P$.   If a  morphism $\psi$ can be obtained from a
 morphism $\varphi$ by a
 finite number of cyclical shifts, the  morphisms are
 {\em conjugate.}

  Let us stress that the property ``to be palindromic" is the property of the language of an infinite
word and not the property of the infinite word itself.   It is known (see also Lemma  \ref{lem:conjugationproperties}) that two
 primitive conjugate  morphisms have the same language.  Both  morphisms  in our  examples, namely  $\varphi_{TM}$ and  $\varphi_F$,  are obviously primitive.

 In 2007, Tan proved   that if a fixed
 point of a primitive  morphism $\varphi$  over a binary alphabet is
 palindromic, then  there exists a morphism $\psi$ in  class $\P$ such that
 languages of both  morphisms coincide \cite[Theorem 4.1]{MR2363365}.
In fact, Tan proved a stronger
 result, namely, that the  morphism  $\varphi$ or $\varphi^2$ is conjugate
 to a  morphism from class $\P$.  

In view of the previous results, in 2008, Blondin Mass\'e and Labb\'e
\cite{labbe_proprietes_2008,blondin_defaut_2008} suggested the following HKS conjecture:
 {(Version 1)\  \it  Let $\bu$ be the fixed point of a primitive morphism.
 Then,  $\bu$ is palindromic if and only if there exists a morphism
 $\varphi\neq\Id$ such that $\varphi(\bu)=\bu$  and $\varphi$
 has a conjugate in class~$\P$.}

But this statement turned out  to be false already over a ternary alphabet: in 2013, Labb\'e found an injective primitive
 morphism whose fixed points contradict Version 1 of the HKS conjecture
\cite{labbe_counterexample_2014}. In \cite{harju_remark_2013},  Harju,
Vesti and Zamboni pointed out that 
after erasing the first two letters from
 Labb\'e's
counterexample one gets   a fixed point of a  morphism from class $\P$.
  Clearly,  language of this word   coincides  with the language of the original word.
Therefore it seems that the  formulation of the theorem  which Tan stated for
a binary alphabet is more suitable  for the formulation of the HKS conjecture:
 {(Version 2) \  \it   Let $\bu$ be a fixed point of a primitive
 morphism  $\varphi$.  If $\bu$ is palindromic  then  there exists a
 morphism $\psi$ in class $\P$ such that the languages of both  morphisms coincide.  }

The possibility of different interpretations of the phrase  ``arise by
morphisms of class $\P$" in the original question of Hof, Knill and Simon, led 
the authors of \cite{harju_remark_2013} to relax the statement   ``to be
language of a fixed point of a  morphism from class $\P$"
which both previous variants impose on the language of a palindromic word
$\bu$.   Harju, Vesti and Zamboni  took into their consideration not only  a
fixed point of a primitive morphism,  but also  its   image under a further
morphism.  Such a word is  called \emph{primitive morphic}.   The last  modification of HKS conjecture sounds:      {(Version 3) \  \it  If $\bu$  is  a palindromic
primitive morphic word  then there exist morphisms $\varphi$ and  $\psi$ with conjugates in class   $\P$  and an infinite word ${\bf v}$ such that
${\bf v} = \varphi({\bf v})$ and languages of $\bu$ and $\psi({\bf v})$ coincide.}\\
Inspired
by  Labb\'e's counterexample, Harju, Vesti and Zamboni  concentrated on infinite words  with
finite defect (for definition of defect consult \cite{MR2071459}).  
  A corollary of  their  results says 
 that for any word $\bu$ with finite defect
 Version 3  is true. 
But  there exists no example of an infinite word (with finite or infinite defect) for which Version 3 holds and the stronger Version 2  fails.\\

In this paper, we  show that Version  1 of the HKS conjecture is still valid
for a large class of morphisms.  We generalize the result of Tan  for fixed
point of marked morphisms on an alphabet of arbitrary size.  A morphism $\varphi$ over an alphabet  $\mathcal{A}$ is said to be marked if it is conjugate to  morphisms  $\psi$ and $\xi$ such that the first letter of  $\psi(a)$ differs from the first letter of $\psi(b)$ and the last letter of $\xi(a)$ differs from the last letter of $\xi(b)$  for all $a,b \in \mathcal{A}$, $a\neq b$. 
Our definition of
marked morphism is more general than the ones previously used by Frid
\cite{MR1650675,MR1734902} or Tan \cite{MR2363365}.  The main result of this paper is the following.

\begin{theorem}\label{thm:main}
Let  $\bu$ be a fixed point of  a marked and primitive  morphism
$\varphi:\alphabet^*\to\alphabet^*$.  If  $\bu$ is palindromic, then  some
power of  $\varphi$ has  a conjugate in class~$\P$.
\end{theorem}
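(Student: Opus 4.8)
The plan is to organise the proof around the reversal operation. For a word $w$ write $\til w$ for its mirror image, and for the morphism $\varphi$ introduce the \emph{reversal morphism} $\widehat\varphi$ defined by $\widehat\varphi(a)=\til{\varphi(a)}$; an easy induction gives $\widehat{\varphi^{\,n}}(a)=\til{\varphi^{\,n}(a)}$, so $\widehat\varphi$ is again primitive and marked. The guiding observation is that membership in class~$\P$ is, up to conjugacy, a symmetry under reversal: if $\psi(a)=p\,q_a$ with $p$ and all $q_a$ palindromes, then $\widehat\psi(a)=\til q_a\,\til p=q_a\,p=p^{-1}\psi(a)\,p$, so $\psi$ is conjugate to $\widehat\psi$ through the palindrome $p$. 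Since reversal turns a conjugate of $\varphi$ into a conjugate of $\widehat\varphi$ (Lemma~\ref{lem:conjugationproperties}), any morphism with a conjugate in class~$\P$ is conjugate to its own reversal. The core of the proof will be to produce the opposite implication for a suitable power: I will show that some $\varphi^{\,n}$ is conjugate to $\widehat{\varphi^{\,n}}$ by a palindrome, and then split that palindrome at its centre — passing if necessary to $\varphi^{\,2n}$ to avoid degenerate empty factors — to exhibit the class-$\P$ conjugate.

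First I would extract the combinatorial consequences of palindromicity. As $\varphi$ is primitive, $\bu$ is uniformly recurrent, so every factor occurs inside arbitrarily long palindromes of $\L(\bu)$; reading the mirror position inside such a palindrome shows that $\L(\bu)$ is closed under reversal. Hence the set of bispecial factors of $\bu$ is invariant under $w\mapsto\til w$, and $\widehat\varphi$ generates the same language $\L(\bu)$. Thus all combinatorial data attached to $\bu$ — special factors, bispecial factors, their bilateral extensions — are mirror symmetric, and it only remains to transport this symmetry to the morphism itself, up to conjugacy and a power.

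The heart of the argument uses Klouda's description of the bispecial factors of a fixed point, for which the marking hypothesis is exactly what is needed: marking provides the synchronisation (unique recognisability of the blocks $\varphi(a)$ inside long factors) that makes the description applicable. Klouda's result gives finitely many initial bispecial factors together with an essentially $\varphi$-induced rule producing every sufficiently long bispecial factor $B$ from a shorter one $B'$ in a controlled form $B=s_L\,\varphi(B')\,s_R$, where the connectors $s_L,s_R$ are bounded and determined by the extension type of $B'$. Applying the reversal involution to this rule, and using $\til{\varphi(B')}=\widehat\varphi(\til{B'})$, the factor $\til B=\til{s_R}\,\widehat\varphi(\til{B'})\,\til{s_L}$ must coincide with the Klouda decomposition of the bispecial factor $\til B$. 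Because marking makes these decompositions unique, I can match them and thereby identify the connectors and initial data of $\varphi$ with those of $\widehat\varphi$. Tracking this matching through one full period of the generation rule — choosing $n$ so that the reversal permutation of the finitely many initial bispecials is trivialised and all images exceed the synchronisation delay — produces a single palindrome $W$ with $\varphi^{\,n}(a)\,W=W\,\widehat{\varphi^{\,n}}(a)$ for every $a$, i.e. $\varphi^{\,n}\sim\widehat{\varphi^{\,n}}$. Combined with the first paragraph, this yields the class-$\P$ conjugate of $\varphi^{\,n}$ (or $\varphi^{\,2n}$) and proves the theorem.

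The main obstacle is precisely this last matching step: converting the mirror symmetry of the infinite, hierarchically generated family of bispecial factors into one algebraic conjugacy valid simultaneously for all letters. The delicate points are that the connectors $s_L,s_R$ vary with the bilateral extension type of each bispecial factor, that reversal may permute rather than fix the finitely many initial bispecials, and that reading off a single conjugating word forces the marking to be exploited on both the left and the right at once, which is only guaranteed after passing to an appropriate power. Choosing one exponent $n$ that simultaneously stabilises the initial data, lengthens all images past the synchronisation delay, and closes the reversal matching into the identity $\varphi^{\,n}(a)\,W=W\,\widehat{\varphi^{\,n}}(a)$ is where the real work lies.
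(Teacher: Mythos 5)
Your strategy is, in outline, the paper's own: reduce to showing that some power of $\varphi$ is conjugate to its reversal, using Klouda's description of bispecial factors together with the marking hypothesis, then convert that conjugacy into a class-$\P$ conjugate by cutting the palindromic conjugating word at its centre. But two genuine gaps remain. First, the eventually periodic case is never addressed, and there your argument produces nothing: a primitive marked morphism can have a periodic palindromic fixed point --- e.g.\ $\xi : a \mapsto aba$, $b \mapsto bab$ with fixed point $(ab)^\omega$ --- and then $\L(\bu)$ contains \emph{no} nonempty special factor at all, so there are no long bispecial factors to feed into your matching and no palindrome $W$ is ever produced by it. The paper treats this case separately (Corollaries~\ref{formOfMorphism} and~\ref{onlyBinaryAlphabet}: the period contains each letter exactly once, palindromicity then forces a binary alphabet, and $\varphi$ itself lies in class~$\P$); your proof needs the same supplement, since nothing in the hypotheses excludes periodic fixed points.

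Second, the step you yourself flag as ``where the real work lies'' is exactly the step that has to be proved, and the difficulties you anticipate there (connectors varying with the extension type, reversal permuting the finitely many initial bispecials, an exponent chosen to exceed a synchronisation delay) are not the real ones. The observation you are missing is what makes the matching clean: apply Klouda's theorem to the rightmost conjugate $\varphi_R$ of a marked morphism; then injectivity of $\Lst(\varphi_R)$ forces $\mathcal{B}_L=\{\emptyword\}$, and injectivity of $\Fst(\varphi_L)$ forces $\mathcal{B}_R=\{w\}$, where $w$ is the conjugate word of $\varphi_L\triangleright\varphi_R$. Hence every sufficiently long bispecial factor is exactly $\varphi_R(u')\,w$ for a shorter bispecial factor $u'$ (Proposition~\ref{generatingBispecial}) --- no connectors to track, no extension types, no permutation of initial data to stabilise. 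The matching then reduces to a letter-by-letter argument: if $\til{\varphi_R(u)w}=\varphi_R(v)w$, then $w$ is a palindrome, $v=\til{u}$, and $\varphi_L(a)=\til{\varphi_R}(a)$ for every letter $a$ occurring in $u$ (Lemma~\ref{lem:xpalindrome}). It is \emph{this} argument, not the bispecial bookkeeping, that dictates the power of $\varphi$: it uses the well-marked property $\Fst(\varphi_L)=\Lst(\varphi_R)$, obtained by raising $\varphi$ to a power that turns both boundary permutations into the identity (Lemma~\ref{lem:powerwellmarked}). Once $\varphi_L=\til{\varphi_R}$ is known on all letters (take $u$ containing every letter, which primitivity allows), the passage to a class-$\P$ conjugate is Lemma~\ref{lem:oneblargerthanw}; note also that your ``split $W$ at its centre'' must handle letters whose images are shorter than $W$ (this requires Lemma~\ref{lem:conjugatevu} together with Lemma~\ref{lem_equiv_i_v}), and that no passage to $\varphi^{2n}$ is needed at that stage.
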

Let us stress that any  primitive morphism with an aperiodic fixed point over
a binary alphabet  is marked in our generalized sense.   Therefore,  Theorem
\ref{thm:main} generalizes Tan's result and 
it  is  another step towards a complete
characterization of the cases for which  Version  1 of the HKS conjecture
holds. 

The article is organized as follows.
In Section 2,  we recall  notions and results on morphisms, conjugacy,
cyclic and marked morphisms.  Characterization of
morphisms from class~$\P$ by the  leftmost and rightmost conjugates is provided in Section \ref{equivalentP}.
 Section \ref{bispecials} is devoted to   structure of bispecial factors of aperiodic palindromic words.
Structure of bispecial factors of marked morphisms is described in Section
\ref{wellMarked}. The crucial ingredient for our consideration in this
section is the description of bispecial factors of circular D0L morphisms given by
Klouda in \cite{MR2928192}.
Theorem~\ref{thm:main} is proved in Section \ref{proofTheorem}. The paper
ends with some comments and open questions.

\section{Preliminaries}

\subsection{Combinatorics on words}

We borrow from Lothaire \cite{MR1905123} the basic terminology about words.
In what follows, $\alphabet$ is a finite {\em alphabet} whose elements are
called  {\em letters}. A {\em word} $w$ over an alphabet $\mathcal{A}$ is a finite sequence 
$w=w_0w_1\cdots w_{n-1}$ where $n\in\N$ and  $w_i \in \mathcal{A}$ for each $i =0,1,\ldots,n-1$. 
The length of $w$ is $|w|=n$.  
By convention the \emph{empty word} is denoted by $\varepsilon$ and
its length is $0$. 
The set of all finite words over $\alphabet$ is denoted by $\alphabet^*$
and the set of nonempty finite words is
$\alphabet^+=\alphabet^*\setminus\{\varepsilon\}$.
Endowed with the concatenation, $\alphabet^*$ is the \emph{free monoid generated by
$\alphabet$}.
The set of right infinite words is denoted by $\alphabet^{\N}$ and the set of
bi-infinite words is $\alphabet^\Z$.
Given a word $w \in \alphabet^*\cup \alphabet^{\N}$,  a \emph{factor} $f$ of $w$ is a word
$f \in \alphabet^*$ satisfying $w = xfy$ for some $x \in \alphabet^*$ and $y
\in \alphabet^*\cup \alphabet^{\N}$.
If $x=\varepsilon$ (resp. $y=\varepsilon$) then $f$ is called a
{\em prefix} (resp. a {\em suffix}) of $w$.
If $w=vu$ then  $v^{-1}w$ and $wu^{-1}$ mean  $u$  and $v$,  respectively.
The set of all factors of $w$, called the \emph{language} of $w$, is denoted
by $\L(w)$.

A factor $w$ of $\bu$ is called \emph{right special}   if it has more than one
\emph{right extension} $x\in\alphabet$ such that $wx \in \L(\bu)$.
A factor $w$ of $\bu$ is called \emph{left special} if it has more than one
\emph{left extension} $x\in\alphabet$ such that $xw \in \L(\bu)$.
A factor which is both left and right special is called \emph{bispecial}.

An infinite word $\bu$ is \emph{recurrent} if any factor occurring in
$\bu$ has an infinite number of occurrences.
An infinite word $\bu$ is \emph{uniformly recurrent} if for any factor $w$ occurring
in $\bu$, there is some length $n$ such that $w$ appears in every factor of
$\bu$
of length $n$.

The {\em reversal} of $w = w_0w_1 \cdots w_{n-1} \in\alphabet^*$ is the word
$\til{\,w\,}=w_{n-1} w_{n-2}\cdots w_0$.  A {\em palindrome} is a word $w$
such that $ w=\til{\,w\,}$.
We say that the language  $\L(\bu)$ of an infinite word  $\bu$ is
\emph{closed under reversal} if  $w \in \L(\bu)$ implies $\til{\,w\,} \in
\L(\bu)$ as well.




\subsection{Periods, conjugacy and palindromes}

This section gathers known results involving periods, conjugate words and
palindromes that are used in this article. We recall  the needed notions. 

If $w = u^k$ for some $k \in \mathbb{N}$ and $u \in \alphabet^*$, we say that $w$ is the $k$-th power of $u$. By $u^*$ we understand the  set of all powers of $u$.  
A {\em period} of a word $v$ is an integer $p$ such that $v$ is a prefix of a power $u^k$ and   $p=|u| < |v|$.  
An infinite word  $\bf w$ is called \emph{periodic} if $\bw  = u^\omega = uuu\cdots$ for some nonempty word $u$.  Length of $u$ is called a \emph{period} of $\bw$.  An infinite word $\bw$ is \emph{eventually periodic} if $\bw = v u^\omega$ for some 
finite words $v$  and $u\neq \varepsilon$;  their length $|v|$ and $|u|$ are called a  \emph{preperiod}
 and a \emph{period}, respectively.    An infinite word is \emph{aperiodic} if it is not eventually
periodic.

 First we recall the result of Fine
and Wilf for words having two periods.
\begin{lemma}[Fine and Wilf]\label{lem:finewilf2}
Let $u$ and $w$ be finite  words over an alphabet $\alphabet$. Suppose $u^h$
and $w^k$, for some integer $h$ and $k$, have a common prefix of length
$|u|+|w|-\gcd(|u|,|w|)$. Then there exists
$z\in \alphabet^{\ast}$ of length $\gcd(|u|,|w|)$
such that $u,w\in z^{\ast}$.
\end{lemma}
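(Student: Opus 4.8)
The plan is to translate the hypothesis into a statement about periods and then run the Euclidean algorithm. I would set $p=|u|$, $q=|w|$ and $d=\gcd(p,q)$, and let $x$ denote the common prefix of $u^h$ and $w^k$, so that $|x|=p+q-d$. Since $q\ge d$ we have $|x|\ge p$, and because $x$ is a prefix of $u^h$ it admits $p$ as a period; symmetrically $|x|\ge q$ and $x$ admits $q$ as a period. Thus the lemma reduces to the purely combinatorial claim: \emph{a word of length at least $p+q-\gcd(p,q)$ that admits both $p$ and $q$ as periods also admits $d=\gcd(p,q)$ as a period}. Once this is established, writing $z$ for the prefix of $x$ of length $d$, the word $x$ is a prefix of $z^\omega$; and since $|x|\ge p$, $|x|\ge q$ with $d\mid p$ and $d\mid q$, the prefixes $u$ and $w$ of $x$ are $z^{p/d}$ and $z^{q/d}$ respectively, giving $u,w\in z^{\ast}$ as required.

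To prove the combinatorial claim I would induct on $p+q$. If $p=q$ then $d=p=q$ and the period $q$ is already the desired one, which settles the base case. For the inductive step I assume without loss of generality $p>q$, and use $\gcd(p,q)=\gcd(p-q,q)=d$. Combining the two defining relations $x_i=x_{i+p}$ and $x_i=x_{i+q}$ on the positions where both are valid yields, after a shift, equalities of the form $x_j=x_{j+(p-q)}$, so that $x$ acquires $p-q$ as a period alongside $q$. The point is that the pair of periods $(q,p-q)$ has the same $\gcd$, namely $d$, and that the length $p+q-d$ comfortably exceeds the corresponding Fine--Wilf threshold $q+(p-q)-d=p-d$; hence the induction hypothesis applies and delivers period $d$, which then propagates back to all of $x$.

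The step I expect to be the main obstacle is the careful bookkeeping of index ranges when passing from periods $p,q$ to period $p-q$. The definition of ``period $p$'' only constrains positions $i$ with $0\le i<|x|-p$, and for positions in the narrow band $q-d\le j<q$ neither of the two naive derivations of $x_j=x_{j+(p-q)}$ applies directly; it is precisely here that the sharp bound $p+q-\gcd(p,q)$ is needed and cannot be relaxed, since an off-by-one in the length estimate would leave a position unreachable and break the induction. Everything else amounts to routine manipulation of the period equalities $x_i=x_{i+p}$ and $x_i=x_{i+q}$ together with the elementary facts that a word of length $\ge d$ with period $d$ is a prefix of $z^\omega$ and that $d$ divides both $p$ and $q$.
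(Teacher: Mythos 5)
The paper itself states this lemma without proof, as a recalled classical result, so your argument has to stand entirely on its own. Your reduction to the two-period statement and the final extraction of $z$ are correct, and induction along the Euclidean algorithm is indeed the standard route. But the inductive step, as written, contains a genuine gap --- one you flag yourself and then do not close. You apply the induction hypothesis to $x$ itself with the pair of periods $(q,p-q)$, which requires knowing that $x$ has period $p-q$ on its full length. As you observe, for positions $j$ in the band $|x|-p\le j\le q-1$ (contained in $[q-d,q)$) neither one-step derivation of $x_j=x_{j+(p-q)}$ is available: the forward one needs $j\le |x|-1-p$ and the backward one needs $j\ge q$. Declaring that ``the sharp bound is needed precisely here'' is not an argument: the equality at those positions does hold, but any derivation of it is essentially the content of the theorem being proved, so invoking it before the induction hypothesis can be applied is circular. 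In short, the object to which you apply the induction hypothesis has not been shown to satisfy its hypotheses.

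The standard repair is exactly what your slack computation points to: apply the induction hypothesis not to $x$ but to its prefix $x'$ of length $|x|-q$. For every $j\le |x|-p-1$ both periods give $x_j=x_{j+p}=x_{j+(p-q)}$ with all indices staying inside $x'$, so $x'$ genuinely has periods $q$ and $p-q$ at every admissible position; moreover $|x'|\ge (p+q-d)-q=p-d=(p-q)+q-\gcd(p-q,q)$, which meets the threshold exactly. The induction hypothesis then gives $x'$ period $d$. Finally propagate: since $x$ has period $q$, every letter satisfies $x_i=x_{i\bmod q}$, and $i\bmod q$ is a position of $x'$ because $|x'|\ge p-d\ge q$; since $d\mid q$ and $x'$ has period $d$, letters of $x'$ at positions congruent modulo $d$ coincide, whence $x_i=x_{i+d}$ for all $i\le |x|-1-d$. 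This two-step structure --- induct on the shortened word, then pull the period back to $x$ using period $q$ and $d\mid q$ --- is where your phrase ``propagates back to all of $x$'' belongs; as written it has nothing to propagate, because the induction hypothesis was invoked on $x$ itself.
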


From Lothaire \cite{MR675953,MR1475463} we borrow  also another  useful result
about conjugate words, formally defined as follows.

\begin{definition}\label{def:conjugate}
Let $y\in\alphabet^*$  and $k \in \mathbb{N}$.
The \emph{$k$-th right conjugate} of $y$ is the word $x$ such that $x w
= w y$  for some word $w$ of length  $|w|=k$.
\end{definition}

\begin{lemma}{\rm \cite{MR675953,MR1475463}}\label{equationL}
If $z=xw=wy$ for some $x,y,w,z\in\alphabet^*$, then there exist unique words
$u,v\in\alphabet^*$ and a unique integer $i\geq 0$ such that
\begin{equation}\label{eq:lothaire}
x=uv, w = (uv)^i u, y=vu.
\end{equation}
where $|u|$, $0\leq|u|<|x|$, is the remainder and $i$ is the quotient of the
division of $|w|$ by $|x|$.
\end{lemma}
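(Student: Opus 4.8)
The plan is to prove existence and uniqueness simultaneously by induction on $|w|$, exploiting that the equation $z = xw = wy$ forces $w$ to be both a prefix and a suffix of $z$. Comparing lengths in $xw = wy$ gives $|x| = |y| =: p$, and I will assume $p \ge 1$ (the case $x = \varepsilon$ degenerates to $w = z = wy$, i.e. $y = \varepsilon$, so the Euclidean division is undefined and the statement is vacuous). Reading $z = wy$ shows that the length-$|w|$ prefix of $z$ equals $w$, while $z = xw$ shows that the length-$p$ prefix of $z$ equals $x$; comparing these two prefixes yields the dichotomy that either $w$ is a prefix of $x$ (precisely when $|w| < p$), or $x$ is a prefix of $w$ (precisely when $|w| \ge p$). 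This dichotomy is what drives the induction.

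In the base case $|w| < p$ the quotient is $i = 0$ and the remainder is $|w|$; since $w$ is a prefix of $x$, set $u := w$ and $v := w^{-1}x$, so that $x = uv$ with $v \neq \varepsilon$, and put $y := vu$. Then $w = (uv)^0 u = u$ and one checks directly that $xw = (wv)w = w(vw) = w(vu) = wy$, giving a decomposition of the required shape with $0 \le |u| < p$. In the inductive step $|w| \ge p$ the dichotomy lets me write $w = x w'$ with $|w'| = |w| - p < |w|$; substituting into $xw = wy$ and cancelling one leading $x$ yields $x w' = w' y$, which is the same equation with a strictly shorter middle word. Applying the induction hypothesis produces unique $u,v$ and $i'$ with $x = uv$, $w' = (uv)^{i'} u$ and $y = vu$, where $i' = \lfloor |w'|/p \rfloor$ and $|u| = |w'| \bmod p$. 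Since $|w'| = |w| - p$, the remainder is unchanged and $i' = i - 1$, whence $w = x w' = (uv)(uv)^{i-1} u = (uv)^{i} u$. This establishes existence, with $i = \lfloor |w|/p \rfloor$ and $|u| = |w| \bmod p$.

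For uniqueness, observe that any decomposition of the asserted form must satisfy $|w| = i\,|x| + |u|$ with $0 \le |u| < |x|$ (the strict inequality because $x = uv$ with $v \neq \varepsilon$). By uniqueness of Euclidean division, $i$ and $|u|$ are then forced to be exactly the quotient and the remainder of $|w|$ by $|x|$; consequently $u$ must be the prefix of $x$ of length $|u|$ and $v$ the complementary suffix, so all three of $u$, $v$ and $i$ are uniquely determined.

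The argument is essentially bookkeeping, so I expect no genuine obstacle. The two points requiring care are the clean prefix/suffix dichotomy derived from the two readings of $z$, and the index shift $i \mapsto i-1$ under the reduction $w \mapsto w' = x^{-1}w$. The only degeneracies to track are the remainder-zero case, where $u = \varepsilon$, $v = x$, $w = x^{i}$ and $y = x$ (reached when the induction bottoms out at $w' = \varepsilon$), and the excluded case $x = \varepsilon$; neither causes any difficulty.
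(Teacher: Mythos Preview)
The paper does not supply its own proof of this lemma; it is quoted as a standard result from Lothaire's books \cite{MR675953,MR1475463}. Your induction on $|w|$ is correct and is essentially the classical argument one finds there: peel off a leading copy of $x$ from $w$ while $|w|\ge|x|$, and stop once $|w|<|x|$, at which point $u=w$ and $v=w^{-1}x$.

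One small wording issue in the base case: you write ``put $y:=vu$'', but $y$ is given data, not something you may define. What your chain of equalities actually shows is $xw=(wv)w=w(vw)=w(vu)$; combining this with the hypothesis $xw=wy$ and cancelling $w$ on the left yields $y=vu$. The mathematics is fine, only the phrasing should be tightened.
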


Now, we present an easy lemma about words that are product of two palindromes.
\begin{lemma}\label{lem:conjugatevu}
Let $u$, $v$ be two palindromes. If $k$ is an
integer such that
\[
k = |u| + \left\lceil\frac{|v|}{2}\right\rceil + \ell\cdot|vu|
\quad\quad\text{or}\quad\quad
k = \left\lceil\frac{|u|}{2}\right\rceil + \ell\cdot|vu|
\]
for some integer $\ell\geq0$,
then the $k$-th conjugate of $vu$ is of the form $\alpha\cdot p$ where
$\alpha\in\alphabet\cup\{\emptyword\}$ and $p$ is a palindrome.
Moreover,
it is a palindrome if and only if
$\alpha=\emptyword$
if and only if
the ceil function is applied on an integer, that is if $|v|$ is even in the first case
and if $|u|$ is even in the second case.
\end{lemma}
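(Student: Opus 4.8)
The plan is to realize every conjugate of $vu$ as a window in the bi-infinite periodic word and to read off palindromicity from its reflection symmetries. Set $a=|v|$, $b=|u|$ and $n=|vu|=a+b$, and define $W\in\alphabet^{\Z}$ by $W_i=(vu)_{i\bmod n}$, so that $v$ occupies the positions $0,\dots,a-1$ and $u$ the positions $a,\dots,n-1$ of each period. First I would record, directly from Definition \ref{def:conjugate} (taking $w$ to be the suffix of $vu$ of length $k$), that the $k$-th conjugate of $vu$ equals the length-$n$ factor $x=W_sW_{s+1}\cdots W_{s+n-1}$ with $s\equiv -k \modn$; since the $n$-th conjugate is $vu$ itself, the conjugate depends only on $k\bmod n$ and the summand $\ell\,|vu|$ in the hypothesis may be dropped.

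The key step is to exploit that $v$ and $u$ are palindromes. I claim $W$ is invariant under the reflection about the centre of any $v$-block and about the centre of any $u$-block; concretely $W_{(a-1)-i}=W_i$ and $W_{(2a+b-1)-i}=W_i$ for all $i\in\Z$. Each identity is checked on one period $[0,n)$ using the palindrome property of the relevant block and is then propagated to all of $\Z$ by $n$-periodicity. Composing the two reflections gives the translation by $n$, so the full set of symmetry centres is the arithmetic progression $c_m=\tfrac{a-1}{2}+m\tfrac{n}{2}$, $m\in\Z$ (integers or half-integers, spaced $n/2$ apart). A length-$r$ factor $W_s\cdots W_t$ with $t=s+r-1$ is therefore a palindrome if and only if its centre $\tfrac{s+t}{2}$ is one of the $c_m$, i.e. if and only if $s+t\equiv a-1\modn$. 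Specializing, the whole conjugate $x=W_s\cdots W_{s+n-1}$ is a palindrome iff $2s\equiv -b\modn$, while its suffix $p=W_{s+1}\cdots W_{s+n-1}$ of length $n-1$ is a palindrome iff $2s\equiv -b-1\modn$.

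It then remains to substitute the two values of $s$. In the first case $s\equiv -b-\lceil a/2\rceil\modn$, so $2s\equiv -2b-2\lceil a/2\rceil\modn$: if $a$ is even then $2\lceil a/2\rceil=a$ gives $2s\equiv -b\modn$ and $x$ is a palindrome (so $\alpha=\emptyword$), whereas if $a$ is odd then $2\lceil a/2\rceil=a+1$ gives $2s\equiv -b-1\modn$, so the suffix $p$ is a palindrome and $x=\alpha p$ with $\alpha=W_s$ a genuine letter; moreover $x$ is not a palindrome because $-b\not\equiv -b-1\modn$ for $n\geq2$. The second case is identical with the roles of $a$ and $b$ exchanged, using $s\equiv -\lceil b/2\rceil\modn$, and yields a palindrome exactly when $b$ is even. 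This proves the three equivalences in the statement simultaneously.

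The main obstacle is the second step: verifying that the reflections about the block centres are symmetries of the entire word $W$ rather than of a single block, and organizing the two families of centres into the single congruence $s+t\equiv a-1\modn$. This forces careful bookkeeping of whether the centres $c_m$ are integers or half-integers. The only other points needing a word are the degenerate cases $v=\emptyword$ or $u=\emptyword$ and $n=1$, where $vu$ is trivially a palindrome and the parity bookkeeping must be read off directly.
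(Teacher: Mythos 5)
Your route is genuinely different from the paper's: the paper simply writes $u=s\alpha\til{s}$, $v=t\beta\til{t}$, exhibits the two conjugates $c_1=\beta\til{t}s\alpha\til{s}t$ and $c_2=\alpha\til{s}t\beta\til{t}s$ together with the conjugacy relations $c_1\cdot\beta\til{t}s\alpha\til{s}=\beta\til{t}s\alpha\til{s}\cdot vu$ and $c_2\cdot\alpha\til{s}=\alpha\til{s}\cdot vu$, and reads off the form $\alpha\cdot p$ by inspection, whereas you encode conjugates as length-$n$ windows of the periodic bi-infinite word $W$ and detect palindromes through reflection symmetries. The positive half of your argument is correct and correctly proved: the identification of the $k$-th conjugate with the window starting at $s\equiv -k\pmod{n}$, the fact that the reflections about the $v$- and $u$-block centres are global symmetries of $W$ (so that every point of the progression $c_m=\tfrac{a-1}{2}+m\tfrac{n}{2}$ is a symmetry centre), and the congruence computations all hold, and together they yield the form $\alpha\cdot p$ and palindromicity when the relevant length is even.

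There is, however, a genuine gap in the negative direction. You assert that the symmetry centres of $W$ are \emph{exactly} the progression $\{c_m\}$, and hence that a factor is a palindrome \emph{if and only if} its centre lies in it; but your symmetry argument only establishes the containment $\{c_m\}\subseteq\{\text{centres}\}$, and equality is false whenever $vu$ has a period smaller than $n$. For $u=v=aba$ one has $W=(aba)^{\Z}$, whose centres are spaced $3/2$ apart rather than $n/2=3$: the window $W_0\cdots W_5=abaaba$ is a palindrome although its centre $5/2$ is not in your progression $\{1+3m\}$. The unproved ``only if'' half is precisely what you invoke to conclude that $x$ is not a palindrome in the odd case, and there it can genuinely fail: for $v=a$, $u=aa$ the designated conjugate of $vu=aaa$ is $aaa$, a palindrome, although $|v|$ is odd. (This degenerate case is in fact an exception to the lemma as stated, and the paper's proof shares the weakness, since it merely asserts that ``$c_1$ is a palindrome exactly when $|v|$ is even''.) The gap is repairable in one line: if $x=\alpha p$ is a palindrome with $\alpha$ a letter and $p$ a palindrome, then $\alpha p=\til{x}=p\alpha$, so $\alpha$ and $p$ are powers of a common word, forcing $x=\alpha^{n}$; thus outside the case where $vu$ is a power of a single letter your conclusion holds. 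Alternatively, inside your framework: a palindromic length-$n$ window forces, by $n$-periodicity, a \emph{global} reflection symmetry about its centre, and composing it with the reflection about $\tfrac{a-1}{2}$ yields a translation of $W$ by an integer $\equiv -1\pmod{n}$, so $W$ would have to be constant.
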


\begin{proof}
Write $u=s\alpha\til{s}$ and $v=t\beta\til{t}$ for some words $s,t\in\alphabet^*$
and letters (or empty word) $\alpha,\beta\in\alphabet\cup\{\emptyword\}$.
The conjugates
$c_1 = \beta\til{t}s\alpha\til{s}t$ and
$c_2 = \alpha\til{s}t\beta\til{t}s$
of $vu$ are of the desired form.
We have
\[
    c_1 \cdot \beta\til{t}s\alpha\til{s} = \beta\til{t}s\alpha\til{s} \cdot vu
    \quad\quad\text{and}\quad\quad
    c_2 \cdot \alpha\til{s} = \alpha\til{s} \cdot vu.
\]
Then $c_1$ is the $k_1$-th conjugate of $vu$ for
$k_1=|\beta\til{t}s\alpha\til{s}|= |u| + \lceil\frac{|v|}{2}\rceil$ and
$c_2$ is the $k_2$-th conjugate of $vu$ for
$k_2=|\alpha\til{s}|= \lceil\frac{|u|}{2}\rceil$.
Note that the $k+\ell|vu|$-th conjugate of $vu$ is equal
to the $k$-th conjugate of $vu$.
Finally, $c_1$ is a palindrome exactly when $|v|$ is even
and $c_2$ is a palindrome exactly when $|u|$ is even.
\end{proof}

We finish by stating results which appeared previously in
\cite[Lemma~1 and 2]{blondin_palindromic_2008}
and \cite[Proposition 6]{MR2830256}.
It was also published in a less general form in \cite[Lemma 5]{MR2071459}
and as a more general form (using involutive antimorphism) as Lemma 2.10 and
2.11 of \cite{labbe_proprietes_2008}. 

\begin{lemma}
{\rm\cite{MR2830256,MR2071459,labbe_proprietes_2008,blondin_palindromic_2008}}
\label{lem_equiv_i_iv}
Assume that $z=xw=wy$. Let  $u,v$ and
$i\geq 0$ be such that \eqref{eq:lothaire} holds. The following conditions are equivalent:
\begin{enumerate}[\rm (i)]
\item $x=\til{y}$\,;
\item $u$ and $v$ are palindromes, i.e., $\til{u}=u$ and $\til{v}=v$;
\item $z$ is a palindrome, i.e., $\til{z}=z$;
\item $xwy$  is a palindrome, i.e., $\til{xwy}=xwy$.
\end{enumerate}
Moreover, if one of the equivalent conditions above holds then
\begin{description}
\item[\rm \;\;(v)] $w$ is a palindrome, i.e., $\til{w}=w$.
\end{description}
\end{lemma}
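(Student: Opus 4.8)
The plan is to lean on two elementary facts about reversal, applied repeatedly: that reversal is an antimorphism, $\til{ab}=\til{b}\,\til{a}$, and that two words of equal length which both occur as prefixes (or both as suffixes) of a common word must coincide. Throughout I work inside the decomposition furnished by Lemma~\ref{equationL}, so that $x=uv$, $w=(uv)^iu$ and $y=vu$; in particular $|x|=|uv|=|vu|=|y|$ (this already follows from $z=xw=wy$), and I will use the standard identity $(uv)^nu=u(vu)^n$.

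First I would settle the equivalence (i) $\Leftrightarrow$ (ii). Since $\til{y}=\til{vu}=\til{u}\,\til{v}$, condition (i) reads $uv=\til{u}\,\til{v}$. If $u$ and $v$ are palindromes this holds at once, giving (ii) $\Rightarrow$ (i). Conversely, from $uv=\til{u}\,\til{v}$ I compare the prefixes of length $|u|=|\til{u}|$ to obtain $u=\til{u}$, and the suffixes of length $|v|=|\til{v}|$ to obtain $v=\til{v}$, which is (ii).

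Next come the two implications out of (ii). Assuming $u,v$ are palindromes, I compute $z=xw=(uv)^{i+1}u=u(vu)^{i+1}$, so that $\til{z}=\til{u}(\til{v}\til{u})^{i+1}=u(vu)^{i+1}=z$, giving (iii). The word $xwy=(uv)^{i+2}u$ has exactly the same shape, so the identical computation yields (iv); hence (ii) $\Rightarrow$ (iii) and (ii) $\Rightarrow$ (iv). To close the cycle I return to (i) from each of (iii) and (iv) by the length-comparison trick. If $z$ is a palindrome, then from $z=wy$ I get $z=\til{z}=\til{y}\,\til{w}$; comparing this with $z=xw$ on the prefix of length $|x|=|\til{y}|$ forces $x=\til{y}$, so (iii) $\Rightarrow$ (i). Likewise, if $xwy$ is a palindrome, then $xwy=\til{xwy}=\til{y}\,\til{w}\,\til{x}$, and the prefix of length $|x|=|\til{y}|$ again gives $x=\til{y}$, so (iv) $\Rightarrow$ (i). Combined with (i) $\Leftrightarrow$ (ii), this proves the equivalence of all four conditions. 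Finally, under (ii) the identity $(uv)^iu=u(vu)^i$ together with $\til{u}=u$ and $\til{v}=v$ gives $\til{w}=\til{u}(\til{v}\til{u})^i=u(vu)^i=w$, establishing (v).

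Every computation here is short, and the logical skeleton (i) $\Leftrightarrow$ (ii), (ii) $\Rightarrow$ (iii) $\Rightarrow$ (i), (ii) $\Rightarrow$ (iv) $\Rightarrow$ (i) visibly covers all four statements with no circular gap. I do not expect a genuine obstacle; the only point that needs a little care is keeping the prefix/suffix-comparison arguments rigorous, since they rely precisely on the length identities $|x|=|y|$ and $|u|=|\til{u}|$, $|v|=|\til{v}|$, rather than on any cancellation property. Everything else is a direct unfolding of the decomposition \eqref{eq:lothaire}.
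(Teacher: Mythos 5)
Your proof is correct. Note, however, that the paper itself gives no proof of this lemma: it is stated as a known result and attributed to four earlier references (Blondin Mass\'e et al., Bucci--de Luca--De Luca, Brlek et al., and Labb\'e's memoir), so there is no internal argument to compare yours against. Your argument is a valid, self-contained replacement: the equivalence (i) $\Leftrightarrow$ (ii) by prefix/suffix comparison of $uv=\til{u}\,\til{v}$ is sound (it uses only that two prefixes of equal length of the same word coincide, and $|u|=|\til{u}|$); the computations $(ii)\Rightarrow(iii)$, $(ii)\Rightarrow(iv)$ and $(ii)\Rightarrow(v)$ via $z=(uv)^{i+1}u$, $xwy=(uv)^{i+2}u$, $w=(uv)^iu$ are correct applications of the antimorphism property of reversal; and the returns $(iii)\Rightarrow(i)$, $(iv)\Rightarrow(i)$ again use only the length identity $|x|=|y|$, which indeed follows from $z=xw=wy$. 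The implication graph (i) $\Leftrightarrow$ (ii), (ii) $\Rightarrow$ (iii) $\Rightarrow$ (i), (ii) $\Rightarrow$ (iv) $\Rightarrow$ (i) closes all the equivalences, and every step remains valid in the degenerate cases $i=0$ or $u=\emptyword$. One cosmetic remark: you never need the hypothesis $0\leq|u|<|x|$ or the uniqueness part of Lemma~\ref{equationL}; any factorization satisfying \eqref{eq:lothaire} suffices, which makes your proof marginally more general than the statement requires.
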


\begin{lemma}
    {\rm\cite{labbe_proprietes_2008,blondin_palindromic_2008}}
    \label{lem_equiv_i_v}
Assume that $z=xw=wy$ with $|w|\geq|x|$. Then, conditions {\em (i)-(v)} in
Lemma \ref{lem_equiv_i_iv} are equivalent.
\end{lemma}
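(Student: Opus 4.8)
The plan is to prove only the single new implication that is actually needed. By Lemma~\ref{lem_equiv_i_iv} the conditions (i)--(iv) are already equivalent, and each of them implies (v); so under the extra hypothesis $|w|\geq|x|$ it suffices to establish the reverse implication (v)$\Rightarrow$(iii), namely that if $w$ is a palindrome then $z$ is a palindrome. Once this is shown, all five conditions become equivalent.

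I would first read off the period structure of $z$. From $z=xw=wy$ we get $|x|=|y|$, and $w$ occurs in $z$ both as a prefix (since $z=wy$) and as a suffix (since $z=xw$), the suffix occurrence beginning at position $|x|$. Comparing these two copies of $w$ letter by letter yields $z_j=z_{j+|x|}$ for $0\leq j<|w|$, which is exactly the statement that $z$ admits the period $p:=|x|$; here the inequality $p\leq|w|$ is precisely the hypothesis $|w|\geq|x|$. Next I would pass to the reversal: using $\til w=w$, the relation $z=xw$ gives $\til z=w\til x$ and $z=wy$ gives $\til z=\til y\,w$, so $\til z$ carries the very same border $w$ with the same gap $|x|$ between its prefix and suffix occurrences, whence $\til z$ also has period $p=|x|$. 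The key point is that $z$ and $\til z$ both begin with $w$, so they agree on their first $|w|\geq p$ letters.

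I would then finish with the elementary fact that a word of length $n$ with period $p\leq n$ is completely determined by its first $p$ letters. Since $z$ and $\til z$ have equal length, the same period $p$, and coincide on their first $p$ letters, they are identical; that is, $z=\til z$, which is (iii). (The degenerate case $x=\emptyword$ is immediate, as then $z=w$ is a palindrome outright.) Combined with Lemma~\ref{lem_equiv_i_iv}, this gives the equivalence of (i)--(v).

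The only (mild) obstacle is recognizing that the hypothesis $|w|\geq|x|$ is exactly what forces the shared prefix $w$ of $z$ and $\til z$ to be long enough to pin down a whole period, after which the two equal-length, equally-periodic words are compelled to coincide. Without this hypothesis the common prefix could be shorter than $p$, the period-propagation argument would break, and indeed (v) would remain strictly weaker than the other conditions.
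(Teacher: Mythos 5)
Your proof is correct; note, though, that there is no in-paper argument to compare it against, since the paper states this lemma with a citation to \cite{labbe_proprietes_2008,blondin_palindromic_2008} and omits the proof.

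Your reduction is the right one: by Lemma~\ref{lem_equiv_i_iv}, conditions (i)--(iv) are equivalent and each implies (v), so only (v)$\Rightarrow$(iii) under the hypothesis $|w|\geq|x|$ is at stake. The route suggested by the paper's own framework (and the one used in the cited sources) goes through the decomposition of Lemma~\ref{equationL}: write $x=uv$, $w=(uv)^iu=u(vu)^i$, $y=vu$; the hypothesis $|w|\geq|x|$ forces $i\geq1$, and then $\til{w}=w$ gives first $\til{u}=u$ (compare the prefixes of length $|u|$ of $w=u(vu)^i$ and $\til{w}=\til{u}(\til{v}\til{u})^i$) and then, since $i\geq 1$, $\til{v}=v$, which is condition (ii). Your argument bypasses the decomposition entirely and goes straight from (v) to (iii) by periodicity: $z$ and $\til{z}$ each have $w$ as both prefix and suffix with gap $|x|$, hence both have period $|x|$, and since they share the prefix $w$ of length $\geq|x|$, an induction on positions forces $z=\til{z}$. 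Both proofs are short and valid; yours is more self-contained (it needs nothing beyond Lemma~\ref{lem_equiv_i_iv} itself) and it isolates exactly where the hypothesis enters --- the common prefix must be long enough to cover one full period. Your closing remark that the implication genuinely fails without the hypothesis is also right: for instance $z=aaba$ with $x=aab$, $w=a$, $y=aba$ satisfies $z=xw=wy$ and (v) but not (iii), since here $|w|=1<3=|x|$.
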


\subsection{Morphisms}

A {\em morphism} is a function $\varphi : \alphabet^* \to \alphabet^*$
compatible with concatenation, that is, such that $\varphi (uv)= \varphi(u)
\varphi (v)$ for all $u,v \in \alphabet^*$.
The \emph{identity morphism} on $\alphabet$ is denoted by $\Id_\alphabet$ or
simply $\Id$ when the context is clear.  
  A matrix $M$ with elements  $M_{ab} =$  number of occurrences of the letter $ a$  in $\varphi(b)$ for each $a,b \in \alphabet$   is 
called   \emph{ incidence matrix  of } $\varphi$.  Obviously,  $M$ is a $(d\times d)$-matrix, where  $d=\Card\alphabet$.    It is easy to see that 
a morphism $\varphi$ is \emph{primitive} if and only if some power of   its incidence matrix    has only positive elements.
A morphism can be naturally extended   to a map over $\alphabet^\N$ by 
$$
\varphi(u_0u_1u_2\cdots) =  \varphi(u_0) \varphi(u_1) \varphi(u_2) \cdots
$$


A morphism is \emph{erasing} if the image of one of the letters is the
empty word.
If $\varphi$ is a nonerasing morphism, we define
$\Fst(\varphi):\alphabet\to\alphabet$ to be the
function such that $\Fst(\varphi)(a)$ is the first letter of $\varphi(a)$.
Similarly, let $\Lst(\varphi):\alphabet\to\alphabet$ be the
function such that $\Lst(\varphi)(a)$ is the last letter of $\varphi(a)$.

A morphism $\varphi$ is \emph{prolongable at the  letter $a$} if $\varphi(a) = aw$,  where
$w$ is a nonempty word $w$.  If $\varphi$ is prolongable at $a$, then
\[
\bw = a w \varphi(w) \varphi(\varphi(w)) \cdots \varphi^{n}(w) \cdots
\]
 is a \emph{fixed point} of
$\varphi$, i.e., $\varphi(\bw)=\bw$. A morphism may  be prolongable at more letters, in other words a morphism may have more fixed points.  
The fixed point starting with  letter $a$ will be denoted $ \varphi^\infty(a)$.
 As we have already mentioned,   all fixed points of a primitive  morphism
 $\varphi$  have the same language. Therefore, notation $\L(\varphi)$ is used instead of   $\L(\bu)$, where $\bu$ is a  fixed point of~$\varphi$.  
The \emph{reversal} of a morphism $\varphi$, denoted by $\til{\varphi}$,
is the morphism such that $\til{\varphi}(\alpha) = \til{\varphi(\alpha)}$ for
all $\alpha \in \alphabet$.
We now define formally class $\P$ morphisms.

\begin{definition}\label{classP}
A morphism $\varphi$ is in \emph{class $\P$} if there exists a palindrome $p$ such that  $p$ is a prefix of $\varphi(\alpha)$ and 
 $\varphi(\alpha)p$ is a palindrome for every $\alpha \in \alphabet$.  
\end{definition}
A  straightforward consequence of the above definition is that   the mapping $ \Psi: v\mapsto
 \varphi(v)p$  assigns to any palindrome $v$ a new palindrome  $\varphi(v)p$.  
If $\varphi$ is primitive then   $\Psi^n(\alpha)$  is a palindrome for any
letter $\alpha \in\mathcal{A}$ and any $n \in \mathbb{N}$  and clearly,
$\Psi^n(\alpha)$ belongs to $\L(\varphi)$. Therefore,   the language of any
fixed point of a primitive morphism in class $\P$ is palindromic.

\begin{remark} The primitivity of $\varphi \in \P$ is not necessary for palindromicity of  a fixed point of $\varphi$.
The following example was provided by Starosta
(personal communication, spring, 2014).
 Consider the
morphism $\varphi$ on a binary alphabet defined by $0\mapsto
000$ and $1\mapsto 10110100$. Clearly, the morphism belongs to 
class $\P$.  The fixed point $\varphi^\infty(1)$ is not uniformly recurrent
as it contains arbitrarily long blocks of zeros. It can be shown that
$\varphi^\infty(1)$  has defect  $0$  and thus contains infinitely many
palindromes.
\end{remark}

\subsection{Conjugacy of acyclic morphisms}

Recall from Lothaire \cite{MR1905123} (Section 2.3.4) that $\varphi$
is \emph{right conjugate} of $\psi$, or that $\psi$ is \emph{left
conjugate} of $\varphi$, noted $\psi\triangleright\varphi$, if there
exists $w \in \alphabet^*$ such that
\begin{equation}
 w\psi(x) = \varphi(x)w, \quad \textrm{for all words } x \in \alphabet^*, \label{FirstCond}
\end{equation}
or equivalently that
$ w\psi(\alpha) = \varphi(\alpha)w$, for all letters $\alpha \in \alphabet$.
We say that the word $w$ is the \emph{conjugate word of the relation
$\psi\triangleright\varphi$}.

A morphism $\varphi:\alphabet^*\to\alphabet^*$ is \emph{cyclic} \cite{MR1475463}
if there exists a word $w\in\alphabet^*$ such that
$\varphi(\alpha)\in w^*$ for all $\alpha\in\alphabet$.
Otherwise, we say that $\varphi$ is \emph{acyclic}.
If $\varphi$ is cyclic and $|w|>1$, then the fixed point of $\varphi$ is $wwww\cdots$ and
is periodic.
Observe that the converse does not hold. For example, $a\mapsto aba, b\mapsto bab$ is
acyclic but both  its fixed points are periodic. We have the following statement.

\begin{lemma}\label{lem:conjugatetoitself}
A morphism is cyclic if and only if it is conjugate to itself with a nonempty
conjugate word.
\end{lemma}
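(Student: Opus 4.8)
The plan is to unfold the meaning of ``conjugate to itself'': taking $\psi=\varphi$ in \eqref{FirstCond}, a nonempty conjugate word is precisely a nonempty word $w$ satisfying $w\varphi(\alpha)=\varphi(\alpha)w$ for every letter $\alpha\in\alphabet$. Thus the statement reduces to the assertion that $\varphi$ is cyclic if and only if there exists a nonempty word commuting with every image $\varphi(\alpha)$. I would prove the two implications separately, the forward one being a direct verification and the converse relying on the classical structure of commuting words.

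For the implication cyclic $\Rightarrow$ self-conjugate, suppose $\varphi(\alpha)\in w^*$ for all $\alpha$ and some word $w$. If $w\neq\emptyword$ I simply take $w$ itself as the conjugate word: writing $\varphi(\alpha)=w^{k_\alpha}$ gives $w\varphi(\alpha)=w^{k_\alpha+1}=\varphi(\alpha)w$, so $\varphi$ is conjugate to itself with the nonempty word $w$. The only degenerate situation is $w=\emptyword$, meaning $\varphi(\alpha)=\emptyword$ for all $\alpha$; then any nonempty word $z$ trivially satisfies $z\varphi(\alpha)=\emptyword=\varphi(\alpha)z$, so this case is covered as well.

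For the converse, assume a nonempty word $z$ satisfies $z\varphi(\alpha)=\varphi(\alpha)z$ for all $\alpha$. The engine is the classical characterization of commuting words: two words commute if and only if they are powers of a common word, which follows from the theorem of Fine and Wilf (Lemma~\ref{lem:finewilf2}). Alternatively one may apply Lemma~\ref{equationL} to the relation $z\varphi(\alpha)=\varphi(\alpha)z$ (with $X=Y=z$ and $W=\varphi(\alpha)$), obtaining $z=UV=VU$ and $\varphi(\alpha)=z^iU$, so that $U$ and $V$ commute. Let $\rho$ be the primitive root of $z$, nonempty since $z$ is. For each letter $\alpha$, either $\varphi(\alpha)=\emptyword\in\rho^*$, or $\varphi(\alpha)$ and $z$ are nonempty commuting words, hence both powers of a common word $t$; since $z\in t^*$ and $\rho$ is the unique primitive root of $z$, the word $t$ must itself lie in $\rho^*$, forcing $\varphi(\alpha)\in\rho^*$. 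Therefore $\varphi(\alpha)\in\rho^*$ for every $\alpha$, which is exactly cyclicity with $w=\rho$.

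The routine parts are the commutation check in the forward direction and the edge cases involving empty images. The step requiring the most care is the passage to the primitive root in the converse: from the pairwise statement ``$z$ commutes with $\varphi(\alpha)$'' one must extract a \emph{single} word $\rho$ such that all images lie in $\rho^*$, rather than merely letter-dependent common roots $t_\alpha$. This is precisely where the uniqueness of the primitive root of $z$ is invoked, and it is the hinge of the argument that I would state most carefully.
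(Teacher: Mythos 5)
Your proof is correct, and it follows the paper's broad outline while replacing its key finishing step with a different one. The forward direction is identical to the paper's: a nonempty cyclicity witness $w$ serves directly as the conjugate word, and the degenerate case $w=\emptyword$ (all images empty) is handled by taking any nonempty word. In the converse, both you and the paper begin the same way, applying the classical commutation theorem ($xy=yx$ if and only if $x$ and $y$ are powers of a common word, which the paper quotes from Lothaire) to obtain, for each letter $\alpha$, a common root of $\varphi(\alpha)$ and the conjugate word. The divergence is in how these letter-dependent roots are merged into a single word witnessing cyclicity: the paper runs a three-way case analysis on how many letters $\beta$ satisfy $w=(z_\beta)^{n_\beta}$ with $n_\beta>1$, invoking Fine and Wilf (its Lemma~\ref{lem:finewilf2}) when there are two or more such letters; you instead pass to the primitive root $\rho$ of the conjugate word and observe that every letter-dependent root, being itself a root of that word, must be a power of $\rho$ by uniqueness of primitive roots. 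Your route is more uniform --- it eliminates the case distinction entirely and treats empty images explicitly --- at the cost of importing a standard fact (existence and uniqueness of the primitive root) that the paper never states, whereas the paper's argument stays strictly within the lemmas it has already quoted. If you wanted your version to be self-contained relative to this paper, it would suffice to remark that uniqueness of the primitive root is itself a quick consequence of Lemma~\ref{lem:finewilf2}.
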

\begin{proof}
If $\varphi:\alphabet^*\to\alphabet^*$ is cyclic,
then there exists a word $w\in\alphabet^*$ such that
for all $\alpha\in\alphabet$ there exists an integer $n$ such that
$\varphi(\alpha)=w^n$. If $w$ is empty, then $\varphi$ is conjugate to itself
with any word of $\alphabet^+$ as conjugate word. Suppose that $w$ is
not empty.
Therefore $\varphi(\alpha)w=w^n\cdot w = w\cdot w^n =
w\varphi(\alpha)$ for all $\alpha\in\alphabet$ and $\varphi$ is conjugate to
itself with a nonempty conjugate word $w$.

For the reciprocal, recall that $xy=yx$ if and only if $x$ and $y$ are powers
of the same word \cite[Prop. 1.3.2]{MR1475463}. Suppose there exists $w\in\alphabet^+$ such that
$\varphi(\alpha)w=w\varphi(\alpha)$ for all $\alpha\in\alphabet$.
Then, there exists a nonempty word $z_\alpha$
such that $\varphi(\alpha)$ and $w$ are powers of $z_\alpha$
for all $\alpha\in\alphabet$.
If $w=z_\alpha$ for all $\alpha\in\alphabet$, then $\varphi(\alpha)\in w^*$
for all $\alpha\in\alphabet$ and $\varphi$ is cyclic.
If there is only one letter $\beta\in\alphabet$ such that
$w=(z_\beta)^n$ with $n>1$ and $w=z_\alpha$ for all
$\alpha\in\alphabet\setminus\{\beta\}$, then $\varphi(\alpha)\in (z_\beta)^*$
for all $\alpha\in\alphabet$ and $\varphi$ is cyclic.
If there are more than one letter $\beta\in\alphabet$ such that
$w=(z_\beta)^{n_\beta}$ with $n_\beta>1$, then
from Lemma~\ref{lem:finewilf2} there exists a word
$z\in\alphabet^+$ such that
$z_\beta\in z^*$ for all those letters $\beta$
and
$\varphi(\alpha)\in z^*$
for all $\alpha\in\alphabet$ and $\varphi$ is cyclic.
\end{proof}
\begin{definition}
Let $\varphi$ be a morphism.
The \emph{rightmost conjugate of $\varphi$} is a morphism $\varphi_R$   such
that the following two conditions hold:
\begin{enumerate}[\rm (i)]
\item $\varphi_R$ is right  conjugate of $\varphi$;
\item if $\psi$ is right  conjugate to $\varphi_R$, then $\psi=\varphi_R$.
\end{enumerate}
 The \emph{leftmost conjugate of $\varphi$}  is  defined analogously and
denoted by $\varphi_L$.
\end{definition}

In other words,  $\varphi_R$ is the rightmost
conjugate of $\varphi$ if $\varphi_R$ is a right conjugate of $\varphi$ and
$\Lst(\varphi_R)$ is not constant. Also,  $\varphi_L$ is the leftmost conjugate of
$\varphi$ if $\varphi_L$ is a left conjugate of $\varphi$ and
$\Fst(\varphi_L)$ is not constant.
Some morphisms do not have a leftmost or rightmost conjugate. For example let
$f:a\mapsto abab,b\mapsto ab$ and $g:a\mapsto baba,b\mapsto ba$. We have that
$f\triangleright g\triangleright f$.

\begin{lemma}\label{lem:leftmostexistence}
If a morphism is acyclic, it has a leftmost and a rightmost conjugate.
\end{lemma}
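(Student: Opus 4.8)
The plan is to show that an acyclic morphism admits a rightmost conjugate (the leftmost case being symmetric, obtained by applying the rightmost argument to the reversal $\til\varphi$). The key observation is that the conjugacy relation $\triangleright$ moves one letter at a time: if $\psi\triangleright\varphi$ with conjugate word $w$ and $|w|=1$, then $w$ is simultaneously the first letter of every $\varphi(\alpha)$, and passing from $\psi$ to $\varphi$ amounts to cyclically shifting each image by one position. The obstruction to continuing to conjugate to the right (and hence the condition defining $\varphi_R$) is precisely that $\Lst$ is not constant: if all images $\varphi(\alpha)$ end in the same letter, one can shift again, and otherwise one cannot. So I would first make precise that a morphism fails to have a right conjugate only if $\Lst(\varphi)$ is constant, and I must produce a genuinely rightmost one after finitely many shifts.

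First I would set up the iteration. Starting from $\varphi$, as long as the current morphism $\varphi^{(k)}$ has $\Lst(\varphi^{(k)})$ constant equal to some letter $c$, I can form its right conjugate $\varphi^{(k+1)}$ by moving that common last letter $c$ from the end of each image to its front, using the single-letter conjugate word $w=c$; concretely $c\,\varphi^{(k+1)}(\alpha)=\varphi^{(k)}(\alpha)\,c$ for all $\alpha$. This does not change the lengths $|\varphi^{(k)}(\alpha)|$, and it produces a right conjugate of $\varphi$ by transitivity of $\triangleright$ (composing the single-letter conjugate words). The termination claim is the heart of the matter: this shifting process must stop, i.e.\ after finitely many steps we reach a morphism whose $\Lst$ is not constant, which is then the desired $\varphi_R$.

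The hard part will be proving termination, and this is exactly where acyclicity is used. The danger is an infinite shift sequence, which would mean every $\varphi^{(k)}$ has constant $\Lst$. Since each image $\varphi(\alpha)$ has fixed finite length $\ell_\alpha=|\varphi(\alpha)|$ and the shift is a cyclic rotation of that fixed word, the sequence of morphisms $\varphi^{(k)}$ is eventually periodic in $k$ (there are only finitely many rotations of each finite image word, so the tuple of images recurs). If the process never halts, then we return to some $\varphi^{(k)}=\varphi^{(k+m)}$ after $m\geq 1$ shifts; this means $\varphi^{(k)}$ is conjugate to itself with a nonempty conjugate word (the concatenation of the $m$ single-letter words). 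By Lemma~\ref{lem:conjugatetoitself}, $\varphi^{(k)}$ is then cyclic, and since conjugacy preserves cyclicity (conjugate morphisms have images that are rotations of one another, hence all powers of a common word), $\varphi$ itself would be cyclic, contradicting the hypothesis. Therefore the shifting halts, yielding $\varphi_R$.

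It remains to verify that the halting morphism $\varphi_R$ satisfies both defining conditions: it is a right conjugate of $\varphi$ by construction, and its $\Lst$ is not constant, so no further right conjugation moving a single common last letter is possible. One should check the minor point that condition~(ii) in the definition—every right conjugate $\psi$ of $\varphi_R$ equals $\varphi_R$—is equivalent to $\Lst(\varphi_R)$ being nonconstant, which follows because any right conjugate with a nonempty word would begin by shifting a constant last letter. Finally, applying this entire argument to $\til\varphi$ and reversing gives the leftmost conjugate $\varphi_L$, completing the proof.
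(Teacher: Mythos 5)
Your argument is correct in substance, and it pivots on the same key fact as the paper, namely Lemma~\ref{lem:conjugatetoitself}, but it reaches that lemma by a genuinely different route. The paper's proof is a short contrapositive: if $\varphi$ had no rightmost conjugate, there would exist right conjugates $\psi$ of $\varphi$ with arbitrarily long conjugate words $w$; choosing $w$ with $|w|$ divisible by every $|\varphi(\alpha)|$, Lemma~\ref{equationL} forces the remainder word $u$ in \eqref{eq:lothaire} to be empty, hence $\psi(\alpha)=\varphi(\alpha)$ for every letter, so $\varphi$ is conjugate to itself with a nonempty word and is cyclic. You instead run the explicit one-letter-shift algorithm and prove termination by pigeonhole: each $\varphi^{(k)}(\alpha)$ is a rotation of $\varphi(\alpha)$, so a non-halting run must revisit some morphism, $\varphi^{(k)}=\varphi^{(k+m)}$, which by transitivity of $\triangleright$ is self-conjugate with a nonempty word, hence cyclic; you then need the additional observation that cyclicity is preserved under conjugacy to transport the contradiction back to $\varphi$. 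The paper's divisibility trick avoids both that transfer step and the decomposition of conjugation into single-letter shifts (a point you correctly flag as needing a check; it follows by comparing last letters on both sides of $w\chi(\alpha)=\psi(\alpha)w$ when $w\neq\emptyword$). What your approach buys is an effective, constructive description of $\varphi_R$; what the paper's buys is brevity. You could also streamline your ending: the shift map is deterministic and invertible, so a non-halting orbit is purely periodic, and already $\varphi^{(0)}=\varphi$ itself recurs, making $\varphi$ directly self-conjugate with a nonempty word and removing the need for the cyclicity-transfer step.

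Two minor slips should be fixed. First, your displayed relation is oriented the wrong way: with the paper's convention that $\psi\triangleright\varphi$ means $w\psi(x)=\varphi(x)w$, moving the common last letter $c$ of $\varphi^{(k)}$ to the front is expressed by $c\,\varphi^{(k)}(\alpha)=\varphi^{(k+1)}(\alpha)\,c$; as written, $c\,\varphi^{(k+1)}(\alpha)=\varphi^{(k)}(\alpha)\,c$ says that $\varphi^{(k)}$ is the \emph{right} conjugate of $\varphi^{(k+1)}$, i.e., your formula shifts in the leftward direction, contradicting your (correct) verbal description. Second, $\Lst$ is defined in the paper only for nonerasing morphisms, and an acyclic morphism may erase some letters (e.g.\ $a\mapsto ab$, $b\mapsto \emptyword$, $c\mapsto ca$); your halting criterion should read ``all nonempty images end in the same letter,'' after which the argument goes through unchanged.
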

\begin{proof}
    Suppose that a morphism $\varphi$ does not have a rightmost conjugate.
 Then there exists arbitrarily
    long  words $w$ satisfying 
 $\psi(\alpha)w =w\varphi(\alpha)$ for each $\alpha \in \mathcal{A}$. 
Take a word $w$ such that $|\varphi(\alpha)|$ divides the
    length of $w$ for all letters $\alpha\in\alphabet$.
According to Lemma~\ref{equationL},  $\psi(\alpha) =\varphi(\alpha)$  for each $\alpha \in \mathcal{A}$. This means that 
 $\varphi$ is conjugate to itself with a nonempty
    conjugate word. From Lemma~\ref{lem:conjugatetoitself}, $\varphi$ is
    cyclic.  
The argument for leftmost conjugates  is the same. 
\end{proof}

\begin{example}\label{longenough}
Consider the following morphisms:
\[
\begin{array}{ll}
\varphi_1 : a \mapsto babba, b \mapsto bab,&\varphi_5 : a \mapsto ababb, b \mapsto abb,\\
\varphi_2 : a \mapsto abbab, b \mapsto abb,&\varphi_6 : a \mapsto babba, b \mapsto bba,\\
\varphi_3 : a \mapsto bbaba, b \mapsto bba,&\varphi_7 : a \mapsto abbab, b \mapsto bab,\\
\varphi_4 : a \mapsto babab, b \mapsto bab.
\end{array}
\]
The morphism $\varphi_1$ is right conjugate to $\varphi_2$ with conjugate word
$b$ because
$\varphi_1(a)\cdot b = babbab = b\cdot\varphi_2(a)$ and
$\varphi_1(b)\cdot b = babb   = b\cdot\varphi_2(b)$.
In general, the following relations are satisfied:
\[
\varphi_L=
\varphi_7\triangleright
\varphi_6\triangleright
\varphi_5\triangleright
\varphi_4\triangleright
\varphi_3\triangleright
\varphi_2\triangleright
\varphi_1
=\varphi_R.
\]
The morphism
$\varphi_7$ is leftmost conjugate
and
$\varphi_1$ is rightmost conjugate
with conjugate word $babbab$.
\end{example}

Recall some simple properties of conjugate morphisms.

\begin{lemma}\label{lem:conjugationproperties}
Let a morphism $\varphi$  be right conjugate of a morphism $\psi$
and $w \in \mathcal{A}^*$ be the conjugate word of the relation
$\psi\triangleright\varphi$.
\begin{enumerate}

\item  For any $k\in \mathbb{N}$,  the  morphism $\varphi^k$ is right
conjugate of $\psi^k$.

\item $\varphi$ is injective if and only if $\psi$ is injective.

\item\label{primitivity}  $\varphi$ is primitive if and only if $\psi$ is primitive.

\item  If $\varphi$ is primitive, then $\mathcal{L}(\varphi)  =
\mathcal{L}(\psi)$.

\item  $\widetilde{\psi}$ is right conjugate of
$\widetilde{\varphi}$ and the corresponding  conjugate word is
$\widetilde{w}$.
 \end{enumerate}
\end{lemma}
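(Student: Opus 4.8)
The plan is to derive everything from the single defining relation $w\psi(x)=\varphi(x)w$, treating items (1), (2), (3), (5) as routine consequences and reserving the real work for the language statement (4).

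For (1) I would guess the conjugating word $W_k=\varphi^{k-1}(w)\varphi^{k-2}(w)\cdots\varphi(w)w$ and verify $W_k\psi^k(x)=\varphi^k(x)W_k$ by induction on $k$: applying $\varphi$ to $w\psi(x)=\varphi(x)w$ and combining the result with the relation evaluated at $\psi(x)$ advances the exponent by one. For (2), observe that $w\psi(x)=\varphi(x)w$ permits cancellation: if $\psi(x)=\psi(y)$ then $\varphi(x)w=w\psi(x)=w\psi(y)=\varphi(y)w$, so $\varphi(x)=\varphi(y)$, and symmetrically $\varphi(x)=\varphi(y)$ forces $\psi(x)=\psi(y)$; hence $\varphi$ is injective iff $\psi$ is. For (3), the equality $\varphi(\alpha)w=w\psi(\alpha)$ says the two sides are the \emph{same} word, so each letter $a$ occurs equally often on both sides; subtracting the contribution of $w$ yields $|\varphi(\alpha)|_a=|\psi(\alpha)|_a$ for all $a,\alpha$, i.e.\ $\varphi$ and $\psi$ share the same incidence matrix, and primitivity of one is equivalent to primitivity of the other by the matrix characterisation recalled above. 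For (5), applying the reversal to $w\psi(\alpha)=\varphi(\alpha)w$ and using $\til{\psi(\alpha)}=\til\psi(\alpha)$ and $\til{\varphi(\alpha)}=\til\varphi(\alpha)$ turns it into $\til w\,\til\varphi(\alpha)=\til\psi(\alpha)\,\til w$, which is exactly the statement that $\til\psi$ is a right conjugate of $\til\varphi$ with conjugate word $\til w$.

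The substantive part is (4). First I would pass from finite to infinite words: taking longer and longer prefixes in $w\psi(x)=\varphi(x)w$ pushes the trailing $w$ to infinity and yields $\varphi(\bx)=w\psi(\bx)$ for every $\bx\in\alphabet^\N$, and iterating gives $\varphi^k(\bx)=W_k\psi^k(\bx)$. By (3) the morphism $\psi$ is primitive, and after replacing $\varphi,\psi$ by a common power (which preserves the conjugacy by (1) and changes neither language) I may assume $\varphi$ is prolongable, with fixed point $\bv$ and $\L(\bv)=\L(\varphi)$. Evaluating the infinite relation at $\bv$ gives $\bv=W_k\psi^k(\bv)$, so $\psi^k(\bv)$ is a suffix of $\bv$; since $\bv$ is uniformly recurrent, deleting a finite prefix leaves the language unchanged, whence $\L(\psi^k(\bv))=\L(\bv)=\L(\varphi)$. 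As every letter $\beta$ occurs in $\bv$, the word $\psi^k(\beta)$ occurs in $\psi^k(\bv)$ and therefore lies in $\L(\varphi)$; ranging over $k$ and $\beta$ this gives $\L(\psi)\subseteq\L(\varphi)$.

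The hard part will be the reverse inclusion $\L(\varphi)\subseteq\L(\psi)$, where the asymmetry of the relation makes the suffix trick less immediate. The cleanest route I foresee uses uniform recurrence: fix $g\in\L(\varphi)=\L(\bv)$ and choose $N$ so that, by uniform recurrence of $\bv$, every factor of $\bv$ of length $N$ contains $g$. Since $\psi$ is primitive, $\L(\psi)$ contains words $h$ of length at least $N$; by the inclusion just proved $h\in\L(\varphi)=\L(\bv)$, so $h$ contains $g$, and as $\L(\psi)$ is closed under taking factors we conclude $g\in\L(\psi)$. Equivalently, $\L(\psi)\subseteq\L(\varphi)$ forces the subshift of $\psi$ to be a nonempty closed shift-invariant subset of the minimal subshift of $\varphi$, hence equal to it. I expect this minimality/uniform-recurrence step to be the only genuinely nonformal ingredient; the remaining reductions (existence of a prolongable power and invariance of the language under deleting a finite prefix) are standard for primitive morphisms.
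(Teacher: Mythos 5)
Your treatment of items (1), (2), (3) and (5) is exactly the paper's: the same conjugating word $W_k=\varphi^{k-1}(w)\cdots\varphi(w)w$ (the paper builds it by the recursion $w_{(k+1)}=\varphi(w_{(k)})w$), cancellation of $w$ for injectivity, equality of incidence matrices for primitivity, and the one-line reversal computation. For the substantive item (4), however, you take a genuinely different route. The paper argues symmetrically with finite words: fixing $n$, it takes a recurrence bound $R(n)$ valid for both languages, chooses $k$ with $|\varphi^k(a)|,|\psi^k(a)|\geq 2R(n)$, and applies Lothaire's factorization to $\varphi^k(a)\,y=y\,\psi^k(a)$ to write $\varphi^k(a)=uv$, $\psi^k(a)=vu$; the longer of $u,v$ is then a word of length at least $R(n)$ lying in \emph{both} languages, and uniform recurrence of each language transfers all length-$n$ factors in both directions simultaneously, yielding $\L_n(\varphi)=\L_n(\psi)$ for every $n$. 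You instead push the conjugacy to infinite words, get $\bv=W_k\psi^k(\bv)$ for a fixed point $\bv$ of (a power of) $\varphi$, deduce $\L(\psi)\subseteq\L(\varphi)$ because a suffix of a recurrent word has the full language, and then recover $\L(\varphi)\subseteq\L(\psi)$ by the minimality/uniform-recurrence argument. Both proofs are correct and rest on the same dynamical ingredient (uniform recurrence of fixed points of primitive morphisms); the paper's version buys symmetry and dispenses with your two auxiliary reductions (replacing $\varphi,\psi$ by a common prolongable power, and invariance of the languages under powering), while your version makes the inclusion $\L(\psi)\subseteq\L(\varphi)$ completely transparent and outsources the converse to a standard minimality fact. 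If you write it up, do make explicit that ``ranging over $k$ and $\beta$'' uses the fact that every word of $\L(\psi)$ is a factor of some $\psi^k(\beta_0)$ with $\beta_0$ a prolongable letter; this is standard but deserves a line.
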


\begin{proof}
1. Let us define recursively  $w_{(1)} = w$ and $w_{(k+1)}=
\varphi(w_{(k)})w = w\psi(w_{(k)})$ for any $k \in \mathbb{N}$.   We
show that $\varphi^k(u) w_{(k)} =   w_{(k)} \psi^k(u)$ for any $u
\in \mathcal{A}^*$.
We proceed by induction on $k$.


Assume that $\varphi^k(u)w_{(k)} =
w_{(k)}\psi^k(u)$.  As $\varphi(v)w = w\psi(v)$ for any word $v$, we can apply
this relation to  $v= \varphi^k(u)w_{(k)} = w_{(k)}\psi^k(u)$. We get
$\varphi^{k+1}(u)\varphi(w_{(k)})w =  w \psi(w_{(k)})\psi^{k+1}(u)$, or
equivalently $\varphi^{k+1}(u)w_{(k+1)} =  w_{(k+1)} \psi^{k+1}(u)$.


2. Let $u,v \in \mathcal{A}^*$. Then
$$
\psi(u) =  \psi(v) \
\Leftrightarrow  \ \psi(u)w = \psi(v)w   \  \Leftrightarrow \
w\varphi(u) = w\varphi(v) \  \Leftrightarrow  \ \varphi(u)
=\varphi(v)\,.
$$

3. Let us recall that a morphism is primitive if and only if there exists an
integer $k$ such that all elements of $k^{th}$ powers of its incidence matrix
are positive. Two mutually conjugate morphisms have the same incidence matrix.

4.  Let us fix an arbitrary $n \in \mathbb{N}$. We will show that   $\mathcal{L}_n(\varphi)   = \mathcal{L}_n(\psi)$, where   
$\mathcal{L}_n(\varphi)$ denotes  $\{w \in \mathcal{L}(\varphi) : |w|=n\}$ and    $ \mathcal{L}_n(\psi)$ is defined analogously.   According to  point
\ref{primitivity}, $\psi$ is primitive as well, and thus there exists a number $R(n)$
such that any factor of  $\mathcal{L}(\varphi)$ longer than  $R(n)$ contains
any factor of $\mathcal{L}_n(\varphi)$ and any factor of  $\mathcal{L}(\psi)$
longer than  $R(n)$ contains any factor of $\mathcal{L}_n(\psi)$. Let us find
$k$ such that for some letter $a$ both  words  $\varphi^k(a)$ and  $\psi^k(a)$
are longer than $2R(n)$.

As $\varphi^k$ and $\psi^k$ are conjugate, there exists a word, say $y$, such
that $\varphi^k(a)y=y\psi^k(a)$.  Using the fact that the equation $xy=yz$
implies $x=uv$ and $z=vu$  for some $u,v$, we can write $ \varphi^k(a) = uv$
and $\psi^k(a) =vu$, in particular $u,v \in   \mathcal{L}(\varphi)$ and $u,v
\in   \mathcal{L}(\psi)$.  Since $|\varphi^k(a)|\geq 2R(n)$, either $u$ or $v$
are longer than $R(n)$ and thus all elements from $\mathcal{L}_n(\varphi)$
occur in $u$ or $v$ and consequently in $\mathcal{L}(\psi)$ as well.

5. Applying reversal mapping to relation $\varphi(a)w
= w\psi(a)$ for any letter $a \in \mathcal{A}$, we get
$\widetilde{w} \widetilde{\varphi(a)} =
\widetilde{\psi(a)}\widetilde{w}$ as desired.
\end{proof}

\begin{lemma}\label{prolongBy_w}
Let $\varphi$ be a primitive  acyclic morphism. Denote  $\varphi_L$ and $\varphi_R$ the leftmost
 and the rightmost conjugate of $\varphi$, respectively.   Let $w \in
 \mathcal{A}^*$ be the conjugate word of the relation
 $\varphi_L\triangleright\varphi_R$.
If $u \in \mathcal{L}(\varphi)$ then $  \varphi_R(u)w =w \varphi_L(u)  \in    \mathcal{L}(\varphi)$.
\end{lemma}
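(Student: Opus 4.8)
The identity $\varphi_R(u)w = w\varphi_L(u)$ is nothing but the defining relation of the conjugacy $\varphi_L\triangleright\varphi_R$ evaluated at $x=u$, so the whole content of the statement lies in the membership in $\mathcal{L}(\varphi)$. First I would record the facts I am entitled to use: by Lemma~\ref{lem:leftmostexistence} the acyclic morphism $\varphi$ does possess a leftmost and a rightmost conjugate, and by parts \ref{primitivity} and~4 of Lemma~\ref{lem:conjugationproperties} both $\varphi_L$ and $\varphi_R$ are again primitive and satisfy $\mathcal{L}(\varphi_L)=\mathcal{L}(\varphi_R)=\mathcal{L}(\varphi)$. In particular $\mathcal{L}(\varphi)$ is the language of the primitive morphism $\varphi_R$, so it is closed under $\varphi_R$, i.e. $\varphi_R\big(\mathcal{L}(\varphi)\big)\subseteq\mathcal{L}(\varphi)$; this closure is exactly what will let me stay inside the language after applying $\varphi_R$.

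The key step is to upgrade the relation $\varphi_R(x)w=w\varphi_L(x)$ from finite words to a single infinite identity. Since $\varphi$ is primitive, some power of it is prolongable and produces a fixed point $\bu$ with $\mathcal{L}(\bu)=\mathcal{L}(\varphi)$; as $u\in\mathcal{L}(\varphi)=\mathcal{L}(\bu)$, the factor $u$ occurs in $\bu$, and I take $\mathbf{z}$ to be the right-infinite suffix of $\bu$ beginning at one such occurrence. Thus $u$ is a prefix of $\mathbf{z}$ and every factor of $\mathbf{z}$ lies in $\mathcal{L}(\varphi)$. Applying the relation to the finite prefixes $x$ of $\mathbf{z}$, the word $\varphi_R(x)$ is a prefix of $\varphi_R(x)w=w\varphi_L(x)$; letting $|x|\to\infty$ the left members converge to $\varphi_R(\mathbf{z})$ and the right members to $w\varphi_L(\mathbf{z})$, whence $\varphi_R(\mathbf{z})=w\varphi_L(\mathbf{z})$.

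It remains to read off the conclusion. Because $u$ is a prefix of $\mathbf{z}$, the word $w\varphi_L(u)$ is a prefix of $w\varphi_L(\mathbf{z})=\varphi_R(\mathbf{z})$, hence a factor of $\varphi_R(\mathbf{z})$. Now every factor of $\varphi_R(\mathbf{z})$ sits inside $\varphi_R(v)$ for some finite factor $v$ of $\mathbf{z}$, and $v\in\mathcal{L}(\mathbf{z})\subseteq\mathcal{L}(\varphi)$ gives $\varphi_R(v)\in\mathcal{L}(\varphi)$ by the closure remark above; therefore $w\varphi_L(u)\in\mathcal{L}(\varphi)$, and since $w\varphi_L(u)=\varphi_R(u)w$ this is precisely what was wanted. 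The only genuinely delicate points, and the ones I would be most careful about, are the closure property $\varphi_R\big(\mathcal{L}(\varphi)\big)\subseteq\mathcal{L}(\varphi)$ and the limit passage: one must make sure that $\mathbf{z}$ is a bona fide right-infinite word all of whose factors remain in $\mathcal{L}(\varphi)$, which is where primitivity (through $\mathcal{L}(\varphi)=\mathcal{L}(\varphi_R)$ and recurrence of $\bu$) is really used; the rest is formal manipulation of the conjugacy relation.
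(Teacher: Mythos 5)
Your proof is correct, and it is essentially the paper's argument reflected left-to-right, with one extra (avoidable) limit passage. The paper works finitarily on the \emph{left}: by recurrence it picks a long word $v$ with $vu\in\mathcal{L}(\varphi)$, uses closure of the language under $\varphi_L$ to get $\varphi_L(vu)\in\mathcal{L}(\varphi)$, and then rewrites $w\varphi_L(vu)=\varphi_R(v)\varphi_R(u)w$ so that $\varphi_R(u)w$ appears as a suffix of $\varphi_L(vu)$ once $|\varphi_R(v)|\geq|w|$. You instead extend $u$ to the \emph{right}, into an infinite suffix $\mathbf{z}$ of the fixed point, use closure under $\varphi_R$, and read $\varphi_R(u)w=w\varphi_L(u)$ off as a prefix of $\varphi_R(\mathbf{z})=w\varphi_L(\mathbf{z})$. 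The two proofs rest on exactly the same pillars (recurrence from primitivity, Lemma~\ref{lem:conjugationproperties}, and the conjugacy relation), so neither is more general. Your detour through the infinite word is unnecessary: if $uv'\in\mathcal{L}(\varphi)$ with $|\varphi_R(v')|\geq|w|$, then $w$ is a prefix of $\varphi_R(v')$ (because $\varphi_R(v')w=w\varphi_L(v')$), hence $\varphi_R(u)w$ is a prefix of $\varphi_R(uv')\in\mathcal{L}(\varphi)$ --- this is the finitary version of your last step and is exactly dual to the paper's computation. Finally, the closure property you rightly flag as delicate, $\varphi_R\bigl(\mathcal{L}(\varphi)\bigr)\subseteq\mathcal{L}(\varphi)$, is used just as implicitly by the paper (its ``Clearly, $\varphi_L(vu)\in\mathcal{L}(\varphi)$''); in both cases it requires that a conjugate of a primitive morphism is primitive with the same language, and that this language is invariant under the morphism --- which, if the conjugate itself is not prolongable, needs the small extra remark that some power of it has a fixed point whose language coincides with $\mathcal{L}(\varphi)$.
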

\begin{proof}
Let $\bu$ be a fixed point of $\varphi$. Since $\varphi$ is primitive,  $u$  occurs in 
$\bu$ infinitely many times. Hence there exists arbitrarily long word $v$ such that $vu \in    \mathcal{L}(\varphi)$. Clearly,  $\varphi_L(vu) \in   \mathcal{L}(\varphi)$. Consequently,  $w^{-1}w \varphi_L(vu) = w^{-1}\varphi_R(vu)w = w^{-1}\varphi_R(v)\varphi_R(u)w \in   \mathcal{L}(\varphi)$.  Since the factor $v$ is long enough,    $|w^{-1}\varphi_R(v)| >0$ and thus  $\varphi_R(u)w \in   \mathcal{L}(\varphi)$.
\end{proof}

\subsection{Marked morphisms}

Frid \cite{MR1650675,MR1734902} defined a morphism $\varphi$ to be marked if
both $\Fst(\varphi)$ and $\Lst(\varphi)$ are injective.
On a binary alphabet, Tan \cite{MR2363365} defined a morphism $\varphi$ to
be marked if $\Fst(\varphi)$ is injective and well-marked if
$\Fst(\varphi)$ is the identity. It is convenient to extend the
definition of Frid to morphisms such that the cardinality of their
conjugacy class is larger than one. Also, we do not need that $\Fst(\varphi)$
be the identity in the proof of Lemma~\ref{lem:xpalindrome}.  Thus, we introduce the
following definitions that will be useful in the sequel.
\begin{definition}
\label{def:marked}
Let $\varphi$ be an acyclic morphism.
We say that $\varphi$ is \emph{marked} if
\begin{center}
$\Fst(\varphi_L)$ and $\Lst(\varphi_R)$ are injective
\end{center}
and that $\varphi$ is \emph{well-marked} if
\begin{center}
it is marked and if $\Fst(\varphi_L)=\Lst(\varphi_R)$
\end{center}
where $\varphi_L$ ($\varphi_R$ resp.) is the leftmost (rightmost resp.)
conjugate of~$\varphi$.
\end{definition}

\begin{example}
The morphism $\varphi_3 : a \mapsto bbaba, b \mapsto bba$ is acyclic.
It has a leftmost conjugate $\varphi_L=\varphi_7: a \mapsto abbab, b \mapsto
bab$ and a rightmost conjugate $\varphi_R=\varphi_1: a \mapsto babba, b
\mapsto bab$.
The morphism $\varphi_3$ is marked since
$\Fst(\varphi_L)$ and $\Lst(\varphi_R)$ are injective.
It is also well-marked since $\Fst(\varphi_L)=\Lst(\varphi_R)$.
\end{example}

\begin{lemma}\label{lem:powerwellmarked}
A marked morphism has a well-marked power.
\end{lemma}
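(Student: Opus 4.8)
The plan is to prove the stronger statement that \emph{every} power $\varphi^k$ is again marked, to compute its leftmost and rightmost conjugates explicitly from those of $\varphi$, and then to use that the associated first- and last-letter maps are permutations of the finite alphabet, so that a suitable exponent drives both of them to the identity simultaneously.

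First I would record the multiplicativity of the operators $\Fst$ and $\Lst$: for a nonerasing morphism $\psi$ one has $\Fst(\psi^k)=\bigl(\Fst(\psi)\bigr)^k$ and $\Lst(\psi^k)=\bigl(\Lst(\psi)\bigr)^k$ as self-maps of $\alphabet$, which follows by an immediate induction since the first letter of $\psi^{k}(a)$ is read off by applying $\Fst(\psi)$ exactly $k$ times. A marked morphism is acyclic, so by Lemma~\ref{lem:leftmostexistence} its leftmost and rightmost conjugates $\varphi_L,\varphi_R$ exist; set $\sigma=\Fst(\varphi_L)$ and $\tau=\Lst(\varphi_R)$, which are injective by markedness, hence permutations of the finite set $\alphabet$. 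By Lemma~\ref{lem:conjugationproperties}(1) the morphism $(\varphi_L)^k$ is a left conjugate of $\varphi^k$ and $(\varphi_R)^k$ is a right conjugate of $\varphi^k$, and by the multiplicativity just noted $\Fst\bigl((\varphi_L)^k\bigr)=\sigma^k$ and $\Lst\bigl((\varphi_R)^k\bigr)=\tau^k$ are again permutations, in particular non-constant (the alphabet has $d:=\Card\alphabet\ge2$ letters, since a one-letter alphabet only carries cyclic morphisms). The characterization recorded after the definition of the extremal conjugates --- a left conjugate with non-constant $\Fst$ is automatically leftmost, and dually for $\Lst$ --- then identifies $(\varphi^k)_L=(\varphi_L)^k$ and $(\varphi^k)_R=(\varphi_R)^k$. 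Hence $\Fst\bigl((\varphi^k)_L\bigr)=\sigma^k$ and $\Lst\bigl((\varphi^k)_R\bigr)=\tau^k$ are injective, so $\varphi^k$ is marked for every~$k$.

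To obtain the well-marked condition I then only have to arrange $\sigma^k=\tau^k$. Since $\sigma$ and $\tau$ are permutations of a $d$-element set, I would take $k$ to be any common multiple of their orders, for instance $k=\mathrm{lcm}(\mathrm{ord}\,\sigma,\mathrm{ord}\,\tau)$ or simply $k=d!$, so that $\sigma^k=\Id=\tau^k$. For this exponent $\Fst\bigl((\varphi^k)_L\bigr)=\Id=\Lst\bigl((\varphi^k)_R\bigr)$; in particular the two maps coincide, which is exactly what well-markedness demands.

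The step I expect to require the most care is the clause ``$\varphi^k$ is acyclic'', which is built into being marked and is \emph{not} inherited from $\varphi$ for free: an acyclic morphism may have a cyclic power (for example $a\mapsto bc,\ b\mapsto c,\ c\mapsto c$ is acyclic but its square is cyclic), so acyclicity of $\varphi^k$ must be extracted from markedness. I would argue that $(\varphi_L)^k$ is visibly not cyclic, because a cyclic morphism sends every letter to a power of one fixed word and therefore has constant $\Fst$, whereas $\Fst\bigl((\varphi_L)^k\bigr)=\sigma^k$ is a permutation of an alphabet with at least two letters. It then remains to know that acyclicity is a conjugacy invariant; this I would derive from Lemma~\ref{lem:conjugatetoitself} by a short rotation computation: if $\chi(\alpha)=r^{n_\alpha}$ for a fixed primitive $r=st$ and $w\,\psi(\alpha)=\chi(\alpha)\,w$ with $w=r^{m}s$, then cancelling yields $\psi(\alpha)=(ts)^{n_\alpha}$ with the \emph{same} rotation $ts$ of $r$ for every $\alpha$, so $\psi$ is cyclic too. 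Being conjugate to the acyclic morphism $(\varphi_L)^k$, the power $\varphi^k$ is therefore acyclic, which completes the verification that the chosen $\varphi^k$ is well-marked.
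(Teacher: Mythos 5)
Your proof is correct and takes essentially the same route as the paper's: multiplicativity of $\Fst$ and $\Lst$ under composition, the observation that markedness makes $\Fst(\varphi_L)$ and $\Lst(\varphi_R)$ permutations of $\alphabet$, and the exponent $k=d!$ forcing both powers to the identity. The only difference is that you explicitly justify two points the paper asserts or leaves implicit --- that $(\varphi_L)^k$ and $(\varphi_R)^k$ are indeed the leftmost and rightmost conjugates of $\varphi^k$ (via the non-constant $\Fst$/$\Lst$ characterization), and that $\varphi^k$ remains acyclic --- which is added rigor rather than a different method.
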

\begin{proof}
Let us realize that  $\Fst(\varphi \circ\psi) = \Fst(\varphi)\circ\Fst(\psi)$ for each  morphisms
 $\varphi, \psi :\alphabet^*\to\alphabet^*$.  If   $\varphi$ is marked, then
$\Fst(\varphi_L)$ and $\Lst(\varphi_R)$ are permutations of the alphabet $\alphabet$.
Let $d=\Card\,\alphabet$
Then for
$k=d!$,  the permutations $ \bigl( \Fst(\varphi_L)\bigr)^{k}$ and $\bigl(
\Lst(\varphi_R)\bigr)^{k}$ are the identity.
Note that $\varphi_L^k$ is the leftmost conjugate and
$\varphi_R^k$ is the rightmost conjugate of
$\varphi^k$.
Since
$\Fst(\varphi_L^{k})
=\bigl( \Fst(\varphi_L)\bigr)^{k}
=\Id
=\bigl( \Lst(\varphi_R)\bigr)^{k}
=\Lst(\varphi_R^{k})$ the power    $\varphi^{k}$ is
well-marked.
\end{proof}

The power need not be $d!$. In fact, the least positive integer $a(d)$ for
which $p^{a(d)}=1$ for all permutations $p$ in $S_d$ is sufficient. This
sequence $(a(d))_d$ is well-known and indexed by A003418 in the
OEIS~\cite{OEIS}.  Of course, there might be an integer $N<a(d)$ such that
$\Fst(\varphi_L)^{N}=\Lst(\varphi_R)^{N}$. In particular, this happens for the
binary alphabet.

\begin{lemma}
A marked morphism is injective.
\end{lemma}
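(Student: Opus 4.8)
The plan is to reduce the injectivity of $\varphi$ to the injectivity of its leftmost conjugate $\varphi_L$, and then exploit that $\Fst(\varphi_L)$ is injective. Since $\varphi$ is marked it is acyclic, so by Lemma~\ref{lem:leftmostexistence} the leftmost conjugate $\varphi_L$ exists, and $\varphi_L$ is connected to $\varphi$ by a finite chain of conjugacy relations $\triangleright$. Each such relation preserves injectivity by point~2 of Lemma~\ref{lem:conjugationproperties}; hence $\varphi$ is injective if and only if $\varphi_L$ is injective. It therefore suffices to prove that $\varphi_L$ is injective.

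The core of the argument is the elementary fact that a nonerasing morphism $\psi$ with $\Fst(\psi)$ injective is itself injective. First I note that $\varphi_L$ is nonerasing: the map $\Fst(\varphi_L)\colon\alphabet\to\alphabet$ is defined on the whole alphabet and is injective by Definition~\ref{def:marked}, which forces each $\varphi_L(\alpha)$ to be nonempty. To prove the fact, suppose $\psi(u)=\psi(v)$ for finite words $u,v$ and argue by induction on $|u|+|v|$. If $u=\varepsilon$ then $\psi(v)=\varepsilon$, and nonerasingness gives $v=\varepsilon$; the case $v=\varepsilon$ is symmetric. Otherwise write $u=u_0u'$ and $v=v_0v'$ with $u_0,v_0\in\alphabet$; comparing the first letters of $\psi(u)=\psi(u_0)\psi(u')$ and $\psi(v)=\psi(v_0)\psi(v')$ gives $\Fst(\psi)(u_0)=\Fst(\psi)(v_0)$, so $u_0=v_0$ by injectivity of $\Fst(\psi)$. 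Cancelling the common prefix $\psi(u_0)=\psi(v_0)$ yields $\psi(u')=\psi(v')$, and the induction hypothesis gives $u'=v'$, whence $u=v$.

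Applying this to $\psi=\varphi_L$ shows that $\varphi_L$ is injective, and the reduction above then gives that $\varphi$ is injective. The argument is short, and the only genuinely delicate point is the bookkeeping in the cancellation step: one must ensure that the common prefix can indeed be cancelled, which is exactly where nonerasingness and the injectivity of $\Fst$ combine — the first letters pin down $u_0=v_0$, so that the images $\psi(u_0)$ and $\psi(v_0)$ coincide and can be deleted from both sides of $\psi(u)=\psi(v)$, leaving a strictly shorter instance. I expect no serious obstacle beyond verifying that $\varphi_L$ is nonerasing and that injectivity propagates along the conjugacy chain; a fully symmetric argument using $\Lst(\varphi_R)$ and suffixes would work equally well.
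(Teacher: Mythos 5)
Your proof is correct and follows essentially the same route as the paper's: reduce to the leftmost conjugate $\varphi_L$ via point~2 of Lemma~\ref{lem:conjugationproperties}, then deduce injectivity of $\varphi_L$ from injectivity of $\Fst(\varphi_L)$. The only difference is that you spell out the elementary induction behind the step ``$\Fst(\varphi_L)$ injective implies $\varphi_L$ injective,'' which the paper states without proof.
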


\begin{proof}
Let $\varphi_L$ be the leftmost conjugate of a marked morphism $\varphi$.
By definition $\Fst(\varphi_L)$ is injective so that $\varphi_L$ is injective.
From Lemma~\ref{lem:conjugationproperties}, injectivity is preserved by
conjugacy. Therefore $\varphi$ is injective.
\end{proof}

\section{Equivalent conditions for a morphism to be in class $\P$}\label{equivalentP}

Let $\varphi$ be an acyclic and primitive morphism.
Let $\varphi_R$ ($\varphi_L$ resp.) be the rightmost (leftmost resp.)
conjugate of $\varphi$ (their existence follows from the acyclic hypothesis,
see Lemma~\ref{lem:leftmostexistence}).
Let $w$ be the conjugate word
of the relation $\varphi_L\triangleright\varphi_R$. Then,
Equation \eqref{FirstCond} is satisfied:
\begin{equation*}
\varphi_R(x)w = w\varphi_L(x), \quad \textrm{for all words } x \in \alphabet^*.
\end{equation*}
Note that we have $|\varphi_R(x)|=|\varphi_L(x)|$ for all words $x\in
\alphabet^*$. Obviously, if a word $x\in \alphabet^*$ is such that $|\varphi_L(x)|\geq |w|$, then
$w$ is a suffix of $\varphi_L(x)$ and $w$ is a prefix of $\varphi_R(x)$.

The next lemma holds whatever is the size of the alphabet. An important
consequence is that HKS conjecture is satisfied for all morphisms satisfying
one of the equivalent conditions.

\begin{lemma}\label{lem:oneblargerthanw}
Let $\varphi$ be a morphism and
$\varphi_R$ ($\varphi_L$ resp.) be the rightmost (leftmost resp.) conjugate of
$\varphi$.
Let $w$ be the conjugate word such that
$w\varphi_L(u)=\varphi_R(u)w$ for all words $u\in\alphabet^*$.
Let $\B\subseteq\alphabet$ be the set of letters for which the image under
$\varphi$ is larger than the conjugate word~$w$, i.e.,
\[
\B = \{b\in\alphabet : |\varphi(b)|>|w| \}.
\]
For all $b\in\B$, let $p_b$ be the nonempty word such that $\varphi_L(b)=p_b w$.
Then, the following conditions are equivalent:
\begin{enumerate}[\rm (1)]
\item the $\lfloor\frac{|w|+1}{2}\rfloor$-th conjugate of $\varphi_L$ is in class $\P$;
\item $\varphi$ has a conjugate in class $\P$;
\item $\varphi$ and $\til{\varphi}$ are conjugates;
\item $\varphi_L=\til{\varphi_R}$;
\item $w$ is a palindrome and $p_b$ is a palindrome for all $b\in\B$.
\end{enumerate}
\end{lemma}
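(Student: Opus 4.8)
The plan is to establish the single cycle $(4)\Rightarrow(1)\Rightarrow(2)\Rightarrow(3)\Rightarrow(4)$ together with the equivalence $(4)\Leftrightarrow(5)$; these yield all five equivalences. The backbone is the chain of one-letter conjugates joining $\varphi_L$ to $\varphi_R$. Since $\varphi_L$ is leftmost, one can only shift rightward, and a single rightward shift from a conjugate $\chi$ is admissible exactly when $\Lst(\chi)$ is constant; the process terminates at $\varphi_R$ (whose $\Lst$ is non-constant) after exactly $|w|$ steps. Hence for every $k$ with $0\le k\le|w|$ the \emph{$k$-th conjugate $\psi$ of $\varphi_L$} exists and lies between $\varphi_L$ and $\varphi_R$. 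Writing $w=w_1w_2$ with $|w_2|=k$, this $\psi$ satisfies both $w_1\psi(x)=\varphi_R(x)w_1$ and $\psi(x)w_2=w_2\varphi_L(x)$ for all $x\in\alphabet^*$, and these relations hold for every letter regardless of image length.

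For $(4)\Leftrightarrow(5)$ I would argue letter by letter from $\varphi_R(\alpha)w=w\varphi_L(\alpha)$. For $(5)\Rightarrow(4)$: when $b\in\B$ the relation gives $\varphi_R(b)=wp_b$, and palindromicity of $w,p_b$ yields $\til{\varphi_L(b)}=\til{p_bw}=wp_b=\varphi_R(b)$; when $\alpha\notin\B$, the prefix and suffix of $w$ of length $|\varphi(\alpha)|$ are respectively $\varphi_R(\alpha)$ and $\varphi_L(\alpha)$, and they are reverses of one another because $w$ is a palindrome, so again $\varphi_R(\alpha)=\til{\varphi_L(\alpha)}$. For $(4)\Rightarrow(5)$: applying Lemma~\ref{lem_equiv_i_iv} to $z=\varphi_R(\alpha)w=w\varphi_L(\alpha)$ (so $x=\varphi_R(\alpha)$, $y=\varphi_L(\alpha)$), condition~(i) $x=\til y$ is exactly (4) for $\alpha$, so condition~(v) forces $w$ to be a palindrome; this is the step that covers even the case $\B=\emptyset$, where (5) reduces to ``$w$ is a palindrome''. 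Once $w$ is a palindrome, for $b\in\B$ I combine $\varphi_R(b)=wp_b$ with $\varphi_R(b)=\til{\varphi_L(b)}=w\til{p_b}$ and cancel $w$ to get $p_b=\til{p_b}$.

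The heart of the lemma is $(4)\Rightarrow(1)$ with $k=\lceil|w|/2\rceil=\lfloor\frac{|w|+1}{2}\rfloor$. Take $\psi$ the $k$-th conjugate of $\varphi_L$ and reverse $\psi(x)w_2=w_2\varphi_L(x)$: using $\til{\varphi_L(x)}=\varphi_R(x)$ (this is (4)) one obtains $\til{w_2}\,\til\psi(x)=\varphi_R(x)\,\til{w_2}$. When $|w|$ is even, $w_2=\til{w_1}$, hence $\til{w_2}=w_1$ and the identity reads $w_1\til\psi(x)=\varphi_R(x)w_1$; comparing with $w_1\psi(x)=\varphi_R(x)w_1$ and cancelling $w_1$ gives $\psi=\til\psi$, so every $\psi(\alpha)$ is a palindrome and $\psi\in\P$ with empty $p$. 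When $|w|$ is odd, writing $w=w_1m\til{w_1}$ so that $w_2=m\til{w_1}$ and $\til{w_2}=w_1m$, the same substitution together with $w_1\psi(x)=\varphi_R(x)w_1$ yields $\psi(\alpha)m=m\til{\psi(\alpha)}$; applying Lemma~\ref{lem_equiv_i_iv} to this relation (conjugate word the single letter $m$) forces $\psi(\alpha)=m\,q_\alpha$ with $q_\alpha$ a palindrome, so $\psi\in\P$ with $p=m$. The rest of the cycle is short: $(1)\Rightarrow(2)$ is immediate since the $k$-th conjugate of $\varphi_L$ is a conjugate of $\varphi$; for $(2)\Rightarrow(3)$, if $\psi\in\P$ is conjugate to $\varphi$ then writing $\psi(\alpha)=p\,r_\alpha$ with $p,r_\alpha$ palindromes gives $p\,\til\psi(\alpha)=p\,r_\alpha p=\psi(\alpha)\,p$, so $\til\psi$ is conjugate to $\psi$ and, by Lemma~\ref{lem:conjugationproperties}(5), $\varphi\sim\psi\sim\til\psi\sim\til\varphi$; finally $(3)\Rightarrow(4)$ follows because the rightmost conjugate is the unique maximal element of a conjugacy class, so $\varphi\sim\til\varphi$ forces $\varphi_R=(\til\varphi)_R=\til{\varphi_L}$, i.e. $\varphi_L=\til{\varphi_R}$.

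The main obstacle is the \emph{short} letters $\alpha\notin\B$, whose images are shorter than $w$ (and may make $\B$ empty, as for $\varphi_3$ in Example~\ref{longenough}). Computing the $k$-th conjugate image by image via Lemma~\ref{lem:conjugatevu} is clean only for $b\in\B$, where $\varphi_L(b)=p_bw$ is a product of two palindromes; for short letters the conjugate word $w_2$ may exceed the image length and no such factorisation is available. The reversal identity $\til{w_2}\,\til\psi(x)=\varphi_R(x)\,\til{w_2}$ used above is what dissolves this difficulty, since it is a morphism-level relation valid for all letters simultaneously and never refers to individual image lengths; this is exactly why I route $(4)\Rightarrow(1)$ through reversal rather than through a direct rotation computation. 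The only other delicate point is the uniqueness of the rightmost conjugate within a conjugacy class, needed for $(3)\Rightarrow(4)$, which holds for acyclic morphisms by Lemma~\ref{lem:leftmostexistence} and the linear ``shift'' ordering of the class.
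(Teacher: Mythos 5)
Your proposal is correct, but it takes a genuinely different route from the paper on the crucial direction. The paper closes the single cycle $(1)\Rightarrow(2)\Rightarrow(3)\Rightarrow(4)\Rightarrow(5)\Rightarrow(1)$, and its hard step is $(5)\Rightarrow(1)$: it computes the $\lfloor\frac{|w|+1}{2}\rfloor$-th conjugate of $\varphi_L$ image by image, splitting into $b\in\B$ (where $\varphi_L(b)=p_bw$ is a product of two palindromes and the rotated image is written explicitly as $czp_b\til{z}$ with $w=\til{z}cz$) and $\alpha\in\alphabet\setminus\B$ (where Lemma~\ref{lem_equiv_i_v} gives palindromes $u_\alpha,v_\alpha$ with $\varphi_L(\alpha)=v_\alpha u_\alpha$ and $w=(u_\alpha v_\alpha)^iu_\alpha$, and then Lemma~\ref{lem:conjugatevu} plus a parity analysis of $i$, $|u_\alpha|$, $|v_\alpha|$ finishes). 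Your hard step is instead $(4)\Rightarrow(1)$: you reverse the defining relation $\psi(x)w_2=w_2\varphi_L(x)$ of the half-shift conjugate $\psi$, substitute $\til{\varphi_L(\alpha)}=\varphi_R(\alpha)$, and compare with $w_1\psi(x)=\varphi_R(x)w_1$; cancellation yields $\psi(\alpha)=\til{\psi(\alpha)}$ when $|w|$ is even, or $\psi(\alpha)m=m\til{\psi(\alpha)}$ when $|w|$ is odd, which puts $\psi$ in class $\P$ with $p=\emptyword$ or $p=m$. This treats long and short letters uniformly and avoids Lemmas~\ref{lem:conjugatevu} and~\ref{lem_equiv_i_v} and all the parity casework; your diagnosis that the morphism-level reversal identity is what dissolves the short-letter difficulty is exactly right, and this is the main gain of your argument. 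The caveats are: (a) you rely on the existence of the chain of one-letter conjugates between $\varphi_L$ and $\varphi_R$, which is standard and consistent with Example~\ref{longenough}, but deserves a line of proof (the common last letter at each stage is forced by comparing last letters in $\varphi_R(x)w=w\varphi_L(x)$); (b) your $(4)\Rightarrow(1)$ silently uses that $w$ is a palindrome when writing $w_2=\til{w_1}$, respectively $w=w_1m\til{w_1}$, so it depends on your previously established $(4)\Rightarrow(5)$ --- the logic is sound because of the order in which you prove things, but the dependency should be stated explicitly; (c) condition (5) now reaches (1) only through the detour $(5)\Rightarrow(4)\Rightarrow(1)$, whereas the paper's computation shows directly how the concrete data $w$ and $p_b$ produce the class-$\P$ conjugate, with explicit formulas for its images. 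The remaining implications $(1)\Rightarrow(2)\Rightarrow(3)\Rightarrow(4)$ and your letter-wise $(4)\Leftrightarrow(5)$ (via condition (v) of Lemma~\ref{lem_equiv_i_iv} and cancellation of $w$) match the paper's arguments up to cosmetic differences: the paper proves $(4)\Rightarrow(5)$ using a single long word $u$ with $|\varphi_L(u)|\geq|w|$, and uses leftmost rather than rightmost conjugates in $(3)\Rightarrow(4)$.
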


\begin{proof}
$(1)\implies(2)$: This is clear.

$(2)\implies(3)$: Let $\varphi':\alpha\mapsto pq_\alpha$ be the conjugate of
$\varphi$ in class $\P$. Then $\til{\varphi}$ is conjugate to
$\til{\varphi'}:\alpha\mapsto q_\alpha p$.  But $\varphi'$ and
$\til{\varphi'}$ are conjugates. We conclude by transitivity of the conjugacy
of morphisms.

$(3)\implies(4)$:
The leftmost conjugate of $\til{\varphi}$ is $\til{\varphi_R}$. If
$\varphi$ and $\til{\varphi}$ are conjugates, they must share the same
leftmost conjugate. Therefore $\varphi_L=\til{\varphi_R}$.

$(4)\implies(5)$:
Let $u$ be a word long enough so that $|\varphi_L(u)|\geq |w|$.
Then $w$ is a suffix of $\varphi_L(u)$ because
$w\varphi_L(u)=\varphi_R(u)w$.
But $\varphi_L(u) = \til{\varphi_R}(u) = \til{\varphi_R(\til{u})}$.
Therefore, $\til{w}$ is a prefix of $\varphi_R(\til{u})$.
But again $w$ is a prefix of $\varphi_R(\til{u})$.
We conclude that $w$ is a palindrome.
If $b\in\B$, then $\til{w}\til{p_b}w =
\til{\varphi_L}(b)w=\varphi_R(b)w=w\varphi_L(b)=wp_bw$
and $p_b$ is a palindrome.

$(5)\implies(1)$:
Let $k=\lfloor\frac{|w|+1}{2}\rfloor$ and $x_\alpha^{(k)}$ be $k$-th conjugate
of the word $\varphi_L(\alpha)$.

First, suppose $b\in\B$.
Let $z$ be the word such that $w=\widetilde{z}c z$ for some letter (or empty
word) $c\in\alphabet\cup\{\emptyword\}$.
\[
x_b^{(k)} = c z p_b \widetilde{z}
\]
is a palindrome if $|w|$ is even and
the product of the letter $c$ and a palindrome if $|w|$ is odd.

Now suppose $\alpha\in\alphabet\setminus\B$.
Then $|w|\geq |\varphi_L(\alpha)|$.
The equation $\varphi_R(\alpha)w = w\varphi_L(\alpha)$ is depicted in
Figure~\ref{fig:wlarger}.
\begin{figure}[h]
\begin{center}
\includegraphics{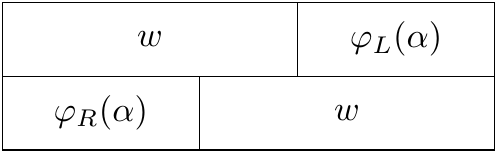}
\end{center}
\caption{When $w$ is longer than $\varphi_L(\alpha)$.}
\label{fig:wlarger}
\end{figure}
Hypothesis of Lemma~\ref{lem_equiv_i_v} are
satisfied and we conclude the existence of palindromes
$u_\alpha,v_\alpha\in\alphabet^*$ and an integer $i\geq1$ such that
$\varphi_L(\alpha)=v_\alpha u_\alpha$,
$w=(u_\alpha v_\alpha)^iu_\alpha$,
$\varphi_R(\alpha)=\widetilde{\varphi_L(\alpha)}=u_\alpha v_\alpha$ and
\[
|w| = i\cdot n_\alpha + |u_\alpha|, \quad 0\leq|u_\alpha|< n_\alpha.
\]
where $n_\alpha=|\varphi_L(\alpha)|$.
We show that the $\lfloor\frac{|w|+1}{2}\rfloor$-th conjugate of
$\varphi_L$ is in class $\P$.
Observe that $k\leq |w|$ so that the $k$-th conjugate
of $\varphi_L$ exists.
We have
\[
k=\left\lfloor\frac{|w|+1}{2}\right\rfloor
=\left\lceil\frac{|w|}{2}\right\rceil
=\left\lceil\frac{i|u_\alpha v_\alpha|+|u_\alpha|}{2}\right\rceil
=
\begin{cases}
    \displaystyle
    \left\lceil\frac{|u_\alpha|}{2}\right\rceil +\frac{i}{2}|u_\alpha
    v_\alpha| & \text{ if $i$ is even,} \\[1em]
    \displaystyle
    \left\lceil\frac{|v_\alpha|}{2}\right\rceil +|u_\alpha| +\frac{i-1}{2}|u_\alpha v_\alpha| & \text{ if $i$ is odd.}
\end{cases}
\]
If $|w|$ is even, then $i$ is odd and $|v_\alpha|$ is even or $i$ and
$|u_\alpha|$ are both even. In other words, the ceil function is applied on an integer
in the above formulas for $k$.  From Lemma~\ref{lem:conjugatevu}, the $k$-th
conjugate of the word $\varphi_L(\alpha)$ is a palindrome.
We conclude that the $k$-th conjugate of $\varphi_L$ is
mapping each letter $\alpha\mapsto x_\alpha^{(k)}$ on a palindrome, that is,
the $k$-th conjugate of $\varphi_L$ is in class $\P$.

If $|w|$ is odd, then $i$ is odd and $|v_\alpha|$ is odd or $i$ is even and
$|u_\alpha|$ is odd. In other words, the ceil function is applied on a half-integer
in the above formulas for $k$. From Lemma~\ref{lem:conjugatevu}, the $k$-th
conjugate of the word $\varphi_L(\alpha)$ is of the form $c_\alpha p$ where
$c_\alpha\in\alphabet$ is a letter and $p$ is a palindrome.
Since $k\geq 1$ and because of the definition of conjugacy, the first letter
of each word $x_\alpha^{(k)}$ is the same, i.e., $c_\alpha=c$.  We conclude
that the $k$-th conjugate of $\varphi_L$ is in class $\P$.
\end{proof}

\begin{corollary}\label{cor:wlarger}
Let  $w$ be a  conjugate word of the relation $\triangleright$ under which  a morphism  $\varphi$ is right conjugate of some  morphism.
If $w$ is a palindrome and $|w|\geq |\varphi(\alpha)|$ for all
$\alpha\in\alphabet$, then the $\lfloor\frac{|w|+1}{2}\rfloor$-th
conjugate of $\varphi$ is in class $\P$.
\end{corollary}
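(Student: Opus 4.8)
The plan is to recognize the statement as the special case $\B=\emptyset$ of Lemma~\ref{lem:oneblargerthanw} and to reuse the computation carried out at the end of its proof. Writing the conjugacy relation with its conjugate word $w$ as $\psi(\alpha)w=w\varphi(\alpha)$ for all $\alpha\in\alphabet$ (the orientation that places $w$ on the left of each $\varphi(\alpha)$, i.e. the orientation of $\varphi_L$ in Lemma~\ref{lem:oneblargerthanw}), the hypothesis $|w|\geq|\varphi(\alpha)|$ for every $\alpha$ says precisely that the set $\B=\{b\in\alphabet:|\varphi(b)|>|w|\}$ of that lemma is empty. Hence every letter falls into the case ``$\alpha\in\alphabet\setminus\B$'' handled there, and the only novelty is that $\varphi$ is not assumed to be a leftmost or rightmost conjugate; no such assumption is in fact used in that portion of the argument.

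First I would fix a letter $\alpha$ and apply Lemma~\ref{equationL} to $z=\psi(\alpha)w=w\varphi(\alpha)$, which has the form $z=xw=wy$ with $x=\psi(\alpha)$, $y=\varphi(\alpha)$ and $|w|\geq|y|=|\varphi(\alpha)|$. This yields words $u_\alpha,v_\alpha$ and a quotient $i_\alpha\geq1$ (positive exactly because $|w|\geq|\varphi(\alpha)|$) with $\varphi(\alpha)=v_\alpha u_\alpha$ and $w=(u_\alpha v_\alpha)^{i_\alpha}u_\alpha$. Since $w$ is a palindrome and $|w|\geq|x|$, the equivalence in Lemma~\ref{lem_equiv_i_v} (condition (v) giving condition (ii)) forces both $u_\alpha$ and $v_\alpha$ to be palindromes. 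Thus each image of $\varphi$ is a product of two palindromes, with the decomposition pinned down by $w$.

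It then remains to show that the $k$-th conjugate, with $k=\lfloor\frac{|w|+1}{2}\rfloor=\lceil\frac{|w|}{2}\rceil$, turns each such image into a palindrome, and into a letter followed by a palindrome when $|w|$ is odd. Using $|w|=i_\alpha|u_\alpha v_\alpha|+|u_\alpha|$ and splitting on the parity of $i_\alpha$, the integer $k$ reduces modulo $|u_\alpha v_\alpha|$ to one of the two residues $\lceil\frac{|u_\alpha|}{2}\rceil$ (when $i_\alpha$ is even) or $|u_\alpha|+\lceil\frac{|v_\alpha|}{2}\rceil$ (when $i_\alpha$ is odd) appearing in Lemma~\ref{lem:conjugatevu} for the word $v_\alpha u_\alpha$. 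Applying that lemma, the $k$-th conjugate of $\varphi(\alpha)$ is a palindrome when $|w|$ is even (the ceiling being taken at an integer), so the conjugate morphism lies in class~$\P$ with $p=\emptyword$; when $|w|$ is odd it is a letter $c_\alpha$ followed by a palindrome, and the definition of conjugacy forces the leading letter $c_\alpha=c$ to be the same for all $\alpha$, giving class~$\P$ with $p=c$.

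The main obstacle is exactly this bookkeeping: matching the single value $k=\lceil\frac{|w|}{2}\rceil$ to the correct residue class of Lemma~\ref{lem:conjugatevu} simultaneously for every letter, reconciling the two parity cases while $i_\alpha$ is allowed to vary with $\alpha$, and checking the agreement of the leading letters in the odd case. A secondary but essential point is the orientation of the conjugacy: one must align the decomposition $\varphi(\alpha)=v_\alpha u_\alpha$ with $w$ in the way described above (the $\varphi_L$-alignment), so that it is genuinely the $\lfloor\frac{|w|+1}{2}\rfloor$-th conjugate of $\varphi$—and not some neighbouring conjugate—that lands in class~$\P$.
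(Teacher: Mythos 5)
Your proposal is correct and is essentially the paper's own (implicit) proof: the corollary is exactly the $\B=\emptyset$ instance of the implication $(5)\implies(1)$ in Lemma~\ref{lem:oneblargerthanw}, and you reuse the same chain of Lemma~\ref{equationL}, Lemma~\ref{lem_equiv_i_v} and Lemma~\ref{lem:conjugatevu}, with the same reduction of $k=\lceil\frac{|w|}{2}\rceil$ modulo $|u_\alpha v_\alpha|$ and the same parity bookkeeping. Your insistence on the orientation is also well placed: the statement holds (and the lemma's computation applies) precisely when $w$ sits to the left of the images of $\varphi$, i.e.\ when $\varphi$ plays the role of $\varphi_L$, which is the alignment used in the paper's proof of the lemma.
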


The morphisms  $\varphi_R$ and $\varphi_L$  from  Example \ref{longenough}
satisfy the assumptions of the previous corollary. Therefore a conjugate of
them belongs to class $\P$.

\section{Properties of palindromic words }\label{bispecials}

This section contains results on properties of palindromic words. It ends with
a lemma giving sufficient conditions for the presence of infinitely many
palindromic bispecial factors.
First we recall the relationship between two properties  of language:  ``to be
palindromic"  and ``to be closed under reversal".

\begin{lemma}\label{lem:CuR}
Let  $\bu$ be a uniformly  recurrent palindromic word.  Then its   language is closed under reversal.
\end{lemma}

\begin{proof}
  Let $v \in \mathcal{L}(\bu)$  and $|v|=n$. Since
$\bu$  is uniformly recurrent, there exists an integer $R(n) $
such that any factor $u\in \mathcal{L}(\bu)$ with $|u|\geq R(n)$
contains all factors  from $ \mathcal{L}_n(\bu)$. In particular,
any palindrome $p \in  \mathcal{L}(\bu)$ with  $|p|\geq R(n)$
contains all factors of length $n$, i.e., $v$ occurs in $p$. And
clearly $\widetilde{v}$ occurs in $p$ as well.
\end{proof}

Berstel, Boasson, Carton and Fagnot provided in \cite{berstel_infinite_2009} a
nice  example of uniformly recurrent word whose language is closed under
reversal,  but the language is not palindromic. It means that the opposite
implication in the previous lemma does not hold.  If we restrict ourselves to
eventually periodic words, the opposite implication is valid.

The next result is well-known and already present in \cite{MR1964623, MR2071459}.

\begin{lemma}\label{lem:PeriodicCuR}
Let  $\bu$ be an eventually periodic word with language closed
under reversal. Then
 $\bu$ is  periodic and there exist two palindromes $p$ and $q$  such
that  $\bu = (pq)^\omega$.
\end{lemma}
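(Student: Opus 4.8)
The plan is to prove the statement in two stages: first to upgrade ``eventually periodic'' to genuinely ``periodic'', and then to exhibit the period word as a product of two palindromes. I expect the first stage to be the real obstacle, while the second will be a quick application of Lemma~\ref{lem_equiv_i_iv}.

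For the first stage I would write $\bu = v\,u^\omega$ using the \emph{minimal} period $P=|u|$ and the \emph{minimal} preperiod $m=|v|$, and argue by contradiction that $m\ge 1$ is impossible. Minimality of $m$ forces $\bu_{m-1}\neq\bu_{m-1+P}$; set $a=\bu_{m-1}$. For $k$ large the word $f=a\,u^k$ occurs in $\bu$ starting at position $m-1$, so by closure under reversal $\til f=(\til u)^k a\in\L(\bu)$. The key point is that \emph{wherever} $\til f$ occurs, its length $kP+1$ is so much larger than $m$ that its final letter $a$, together with the preceding full period, sits deep inside the periodic tail (positions $\ge m$). There the shift relation $\bu_i=\bu_{i-P}$ holds, so the final $a$ must equal the first letter of the last $\til u$-block, namely the first letter of $\til u$, which is the last letter of $u=\bu_m\cdots\bu_{m+P-1}$, i.e.\ $\bu_{m-1+P}$. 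This gives $\bu_{m-1}=\bu_{m-1+P}$, contradicting minimality; hence $m=0$ and $\bu=u^\omega$.

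This first stage is where the difficulty concentrates: the whole content is to rule out a nontrivial preperiod, and the care needed is precisely to guarantee that the occurrence of $\til f$ reaches far enough into the tail for the periodicity relation to apply, uniformly in the unknown position of that occurrence. Once that is secured, the contradiction is immediate.

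For the second stage I take $u$ to be the minimal period word, so $\bu=u^\omega$. Since $u\in\L(\bu)$, closure under reversal gives $\til u\in\L(\bu)$; being a factor of length $|u|$ of a word of period $|u|$, $\til u$ is a cyclic rotation of $u$, so I can write $u=st$ and $\til u=ts$ for some $s,t\in\alphabet^*$. I would then consider $z=sts$, noting $z=us=s\,\til u$, and apply Lemma~\ref{lem_equiv_i_iv} with the roles $x=u$, $w=s$, $y=\til u$: condition~(i), $x=\til y$, reads $u=\til{\til u}=u$ and holds trivially, so condition~(v) yields that $w=s$ is a palindrome. Symmetrically, applying the lemma to $z'=tst=\til u\,t=t\,u$ with roles $x=\til u$, $w=t$, $y=u$ shows that $t$ is a palindrome. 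Therefore $u=st$ is a product of two palindromes and $\bu=u^\omega=(st)^\omega$, so setting $p=s$ and $q=t$ completes the proof.
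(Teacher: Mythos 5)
Your proof is correct, and while it follows the same two-stage skeleton as the paper (first upgrade ``eventually periodic'' to ``periodic'', then split the period word into two palindromes), your first stage runs by a genuinely different mechanism. The paper writes $\bu = z w^\omega$ with $w$ the shortest period, first proves via Lemma~\ref{equationL} that $w$ is primitive (it occurs in $ww$ only as prefix and suffix), then asserts that closure under reversal makes every prefix of $\bu$ occur infinitely often, so the prefix $zw$ occurs inside some $w^n$; primitivity aligns that occurrence on a block boundary, making $z$ a suffix of $w^{n-1}$, and minimality of $|z|$ forces $z=\emptyword$. Your stage one instead is a local, letter-level contradiction: reversing $f=au^k$ and applying the shift relation $\bu_i=\bu_{i-P}$ deep in the tail yields $\bu_{m-1}=\bu_{m-1+P}$, contradicting minimality of the preperiod. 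This buys you two simplifications: you never need primitivity of the period word, and you avoid the recurrence-of-prefixes claim, which the paper uses without spelling out its (short but nontrivial) justification; the price is the index bookkeeping, which you handle correctly and uniformly in the unknown occurrence position of $\til{f}$. In the second stage both proofs use the identical decomposition ($\til{u}$ is a factor of length $|u|$ of a word of period $|u|$, hence a rotation: $u=st$, $\til{u}=ts$); the paper finishes in one line by comparing $ts=\til{u}=\til{t}\,\til{s}$ and matching lengths to get $s=\til{s}$, $t=\til{t}$, whereas you invoke Lemma~\ref{lem_equiv_i_iv} twice, on $sts=us=s\til{u}$ and on $tst=\til{u}t=tu$. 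Your applications are valid---condition (i) holds trivially and $x$ is nonempty in both, so the decomposition of Lemma~\ref{equationL} exists---but this is heavier machinery than the situation requires; the direct reversal computation gives the same conclusion immediately.
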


\begin{proof}

Let $\bu = z w^\omega$, where  the length of  $z$ is  minimal and  $w$  has
the shortest length among all possible periods. First we show the fact that
$w$ occurs in  $ww$ only twice:  once   as a prefix and once as a suffix.
Indeed, otherwise  there exist nonempty words $x$ and $y$ such that $w=xy=yx$.
Then  according to  Lemma \ref{equationL} the word $w$ is a  power of a
shorter  word and it  contradicts our choice of  $w$.

Since $\mathcal{L}(\bu)$ is closed under reversal, any prefix of
$\bu$ has infinitely many occurrences in $\bu$.  In particular, the prefix $zw$ occurs in the factor $w^n$ for some sufficiently large $n \in \N$. Due to the previous fact, $z$ is a suffix of $w^{n-1}$. The minimality of the length of $z$ implies that $z$ is empty.

Then $\bu$ is purely periodic, i.e.,  $\bu = w^\omega$. Closedness under
reversal gives that $\widetilde{w}$ is  a factor
of~$\bu$. Any factor of length $|w|$, in particular $\widetilde{w}$, occurs in
$ww$. Therefore there exist factors $p,q$ of $w$ such that $p\widetilde{w}
q = ww$.  Consequently $w =pq$ and    $\widetilde{w}  = qp$.
It implies that $\widetilde{p}=p$ and $\widetilde{q}=q$ and thus both $p$ and
$q$ are palindromes.
\end{proof}

\begin{lemma}\label{lem:infty_palinBranch}
Let  $\bu\in \mathcal{A}^\mathbb{N}$ be a   palindromic word. Then there  exists a
bi-infinite word ${\bf p}:=  \cdots
p_3p_2p_1p_0p_1p_2p_3\cdots $,   where $p_0 \in
\mathcal{A}\cup\{\varepsilon\} $ and $p_i\in \mathcal{A}$ for
$i\geq1$,   such that   for any nonnegative integer $m$  the string
$ p_mp_{m-1}\cdots p_0\cdots  p_{m-1}p_m$ is a palindrome occurring
in $\mathcal{L}(\bu)$.  The word  ${\bf p}$ is called {\it
infinite palindromic branch of} $\bu$.
\end{lemma}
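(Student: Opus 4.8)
The plan is to obtain the branch by a König's lemma (compactness) argument applied to the tree of palindromic factors of $\bu$, organised by their centres. The only input I would use from the hypothesis is that, $\bu$ being palindromic, $\L(\bu)$ contains infinitely many palindromes; since $\L(\bu)$ has at most $(\Card\alphabet)^n$ factors of each length $n$, these palindromes are of unbounded length.

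First I would build a forest $F$ whose vertices are the palindromes of $\L(\bu)$ together with the empty word $\emptyword$, declaring $P'$ to be a child of $P$ whenever $P'=aPa$ for some letter $a\in\alphabet$ (equivalently, $P$ is obtained from $P'$ by deleting its first and last letters). The roots are $\emptyword$ and the one-letter palindromes $a\in\L(\bu)$, so there are at most $\Card\alphabet+1$ of them. Two structural facts must then be verified. (i) $F$ is a forest covering every palindrome: if $P'$ is a palindrome of length at least $2$ then its first and last letters coincide, $P'=aPa$, and $P$ is again a palindrome---indeed $\til{aPa}=a\til{P}a=aPa$ forces $\til{P}=P$---and $P\in\L(\bu)$ as a factor of $P'$; iterating this peeling removes one letter from each side and terminates at a unique root, a single letter or $\emptyword$ according to the parity of $|P'|$, so the parity (hence the centre) is constant along each tree. (ii) $F$ is finitely branching: a vertex $P$ has at most $\Card\alphabet$ children, one for each letter $a$ with $aPa\in\L(\bu)$.

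Finally I would combine pigeonhole with König's lemma: $F$ has infinitely many vertices but only finitely many trees, so some tree $T$ is infinite, and being finitely branching it contains an infinite path $P_0,P_1,P_2,\dots$ where $P_0$ is the root and $P_{m+1}=p_{m+1}P_mp_{m+1}$ for letters $p_{m+1}\in\alphabet$. Taking $p_0\in\alphabet\cup\{\emptyword\}$ to be the root of $T$ defines the bi-infinite word $\mathbf{p}=\cdots p_2p_1p_0p_1p_2\cdots$, and by construction each $P_m=p_mp_{m-1}\cdots p_0\cdots p_{m-1}p_m$ is a palindrome lying in $\L(\bu)$, which is the required branch. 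The one genuinely delicate point is the bookkeeping in (i): checking that the peeling operation stays within the palindromes of $\L(\bu)$ and terminates at the correct centre, so that $F$ is well defined and the odd and even cases are cleanly separated. Once this is in place, finite branching and König's lemma finish the proof with no further computation.
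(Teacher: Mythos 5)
Your proof is correct and takes essentially the same route as the paper: the paper likewise builds the directed graph of palindromes with edges $p \to apa$, observes it is a forest with at most $1+\Card\alphabet$ components (rooted at $\varepsilon$ and the letters) and finite branching, and uses the pigeonhole principle together with K\H{o}nig's lemma to extract an infinite path giving the branch. Your explicit verification of the peeling step and of finite branching just spells out details the paper lists more tersely.
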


\begin{proof}
Let us construct  a directed infinite graph $G$: vertices of $G$ are
palindromes occurring in $\bu$.  A pair of palindromes $p$ and
$q$ are connected with an  edge starting in $p$ and ending in $q$ if
there exists a letter $a$ such that $q=apa$. It is readily seen
that

-- for any palindrome $q$ with length at least 2  there exists exactly one edge  ending in
$q$;

-- for any palindrome $q$   there exists at most $\Card \mathcal{A}$ edges starting   in
$q$;

-- no edge ends in a vertex from  $\mathcal{A}\cup\{\varepsilon\}$;

-- $G$ contains no directed cycle because any edge starts in a
shorter palindrome than it ends;

--  any palindrome $p$ is reachable by an oriented path from one of vertices  from  $\mathcal{A}\cup\{\varepsilon\}$.\\
It means that $G$ is a forest with  $1+\Card \mathcal{A}$ components.
Since the language of $\bu$ contains infinitely many palindromes, at
least one of these components is an infinite tree.  According to the famous
K\H{o}nig's lemma \cite{MR1035708},   this component contains  an infinite directed
path.  Denote its starting vertex $p_0 \in
\mathcal{A}\cup\{\varepsilon\} $. The $m^{th}$-vertex $P_m$ along
the path is a palindrome  $P_m:= p_mp_{m-1}\cdots p_0\cdots
p_{m-1}p_m$.

It implies that there exists a bi-infinite word ${\bf p}:=  \cdots p_3p_2p_1p_0p_1p_2p_3\cdots $ as desired.
\end{proof}

\begin{lemma}\label{lem:infty_bisp_pal}
Let  $\bu$ be a uniformly recurrent  palindromic word. If ${\bf
u}$  is not eventually periodic,    then its  language contains
infinitely many palindromic bispecial factors.
\end{lemma}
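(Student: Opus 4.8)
The plan is to argue by contradiction, reducing the statement to a periodicity claim. The starting observation is that, by Lemma~\ref{lem:CuR}, the language $\L(\bu)$ is closed under reversal. Consequently, reversal sends a right extension $wa$ of a palindrome $w=\til w$ to the left extension $\til{\,wa\,}=aw$, so $w$ is right special if and only if it is left special, if and only if it is bispecial. Thus it suffices to produce infinitely many right special palindromes. Suppose, for contradiction, that there are only finitely many of them and let $N$ be the length of the longest one (set $N=0$ if there are none). Then every palindrome of length greater than $N$ is neither right nor left special, hence admits a unique right extension letter and a unique left extension letter.

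Next I would invoke Lemma~\ref{lem:infty_palinBranch} to fix an infinite palindromic branch $\mathbf{p}=\cdots p_2p_1p_0p_1p_2\cdots$, together with the nested palindromes $P_m=p_m\cdots p_0\cdots p_m\in\L(\bu)$, and choose $m_0$ with $|P_{m_0}|>N$. For every $m\ge m_0$ the palindrome $P_m$ has length greater than $N$; since $P_{m+1}=p_{m+1}P_mp_{m+1}\in\L(\bu)$, both its unique right and its unique left extension letters equal $p_{m+1}$. The heart of the argument is a forcing step: I claim that whenever $P_{m_0}$ occurs in $\bu$ at position $k$, the word $\bu$ begins with $P_{m_0+k}$. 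This follows by induction on $j$: if $P_{m_0+j}$ occurs at position $k-j$ with $k-j\ge 1$, then its forced left and right neighbours are both $p_{m_0+j+1}$, so $P_{m_0+j+1}=p_{m_0+j+1}P_{m_0+j}p_{m_0+j+1}$ occurs at position $k-(j+1)$; after $k$ steps the occurrence reaches position $0$. In other words, non-specialness forces the word around any occurrence of $P_{m_0}$ to coincide with the branch $\mathbf{p}$, and pushing to the left boundary turns these palindromes into palindromic prefixes of $\bu$.

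Finally I would extract periodicity. Since $\bu$ is uniformly recurrent, $P_{m_0}$ occurs with bounded gaps: there is a constant $C$ such that consecutive occurrence positions satisfy $k_{i+1}-k_i\le C$, and in particular there are occurrences at arbitrarily large positions. By the forcing step each $P_{m_0+k_i}$ is a palindromic prefix of $\bu$, and for $i$ large these prefixes have unbounded length. As $P_{m_0+k_i}$ is a prefix of the longer palindrome $P_{m_0+k_{i+1}}$ and is itself a palindrome, it is also a suffix of $P_{m_0+k_{i+1}}$; hence it is a border, and $\bu$ admits the prefix $P_{m_0+k_{i+1}}$ with period $|P_{m_0+k_{i+1}}|-|P_{m_0+k_i}|=2(k_{i+1}-k_i)\le 2C$. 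So $\bu$ has arbitrarily long prefixes whose period lies in the finite set $\{1,\dots,2C\}$; fixing a value $r$ attained infinitely often and letting the prefix length tend to infinity yields $\bu_j=\bu_{j+r}$ for all $j$, so $\bu$ is purely periodic. This contradicts the hypothesis that $\bu$ is not eventually periodic, completing the proof.

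I expect the forcing step — promoting ``arbitrarily long palindromes occur somewhere'' to ``arbitrarily long palindromic prefixes'', by showing that non-specialness pins the nested palindromes to a common centre and lets them grow out to the left margin — to be the main obstacle; the remaining passage from borders of bounded length difference to a genuine period is routine once uniform recurrence supplies bounded gaps. A minor point to verify separately is the even-length case $p_0=\emptyword$, where the common centre is a point between two letters, but the same induction and border-to-period computation go through verbatim.
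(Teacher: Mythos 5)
Your proof is correct, but it follows a genuinely different route from the paper's. The paper argues directly, via complete return words: for any palindrome $w$, uniform recurrence makes the set of complete return words of $w$ finite; the longest common prefix of these return words has the form $wV$, is the unique right prolongation of $w$ of its length, and is right special (a unique complete return word would force periodicity); symmetrically the longest common suffix $Uw$ is left special, so $UwV$ is bispecial, and closure under reversal (Lemma~\ref{lem:CuR}) forces $U=\til{V}$, making $\til{V}wV$ a palindromic bispecial factor containing $w$. Infinitely many palindromes then immediately give infinitely many palindromic bispecial factors. You instead argue by contradiction: after using Lemma~\ref{lem:CuR} to identify right special, left special and bispecial palindromes, you assume there are only finitely many, invoke the infinite palindromic branch of Lemma~\ref{lem:infty_palinBranch} (which the paper proves but does not use in its own proof of this lemma), and show by your forcing induction that non-specialness turns every occurrence of $P_{m_0}$ into a palindromic prefix $P_{m_0+k}$ of $\bu$; bounded gaps from uniform recurrence then produce arbitrarily long prefixes whose periods lie in a fixed finite range, whence $\bu$ is periodic --- a contradiction. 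All the steps check out, including the existence of left extensions (from recurrence) and the border-to-period bookkeeping. What the paper's construction buys is a stronger, localized statement --- every palindrome $w$ sits inside a palindromic bispecial factor $\til{V}wV$ which is its unique palindromic extension of that length --- and this is exactly what the Remark following the lemma exploits to conclude that every palindromic branch contains infinitely many bispecial palindromes; your contradiction argument yields the cardinality claim but does not locate the bispecial palindromes. What your approach buys is independence from return-word machinery: it reduces the lemma to the classical principle that an aperiodic uniformly recurrent word cannot have too few special factors, with the palindromic branch supplying nested palindromes about a common center.
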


\begin{proof}  Let $w$ be a factor of $\bu$.
Recall that $u$ is a \emph{complete return word} of $w$ in $\bu$ if $u$ is a
factor of $\bu$, $w$ is a prefix and a suffix of $u$ and  $u$ contains no
other occurrences of $w$.

First we show the existence of a bispecial word containing any factor $w$.
 Since $\bu$   is
uniformly recurrent,  the gaps between consecutive occurrences of $w$ are
bounded, or  in other words, the set of complete return words to $w$ is
finite.
Let $v$ be the longest common prefix of all complete return words to
$w$. Clearly, it has the form  $v =wV$ for some (possibly empty) factor $V$ of
$\bu$.   If $wV$ contains two occurrences of $w$,
then $wV$ is the unique complete return word to $w$ and $\bu$ is periodic
which is a contradiction. Thus $wV$ contains only one occurrence of $w$ and
$wV$ must be shorter than the shortest complete return word to $w$. Therefore,
$wV$ is right special because it is the longest common prefix of two longer
complete return words to $w$. Moreover, $wV$ is the unique right prolongation of the factor $w$ with length $|wV|$.
Analogously, the longest common suffix  of all complete return words  to $w$  has the form $Uw$, it is
left special and $Uw$ is the unique left prolongation of the factor $w$ with length $|Uw|$.
As $Uw$ is left special and $wV$ is the unique right prolongation of the factor $w$ with length $|wV|$,
the factor $UwV$ is left special as well. For the same reason,  $UwV$ is right special and thus  $UwV$ is  a bispecial factor containing the factor $w$.

Let us show that if $w$ is a palindromic factor of   $\bu$, then the bispecial
factor $UwV$ is palindromic as well. Indeed, as $\bu$ is closed under reversal
(Lemma~\ref{lem:CuR}), the set
of complete return words to the palindrome  $w$  is closed under reversal as well.
Therefore the longest common suffix of all complete return words to $w$ is
just the reversal of the longest common prefix of all complete return words to
$w$, in other words $\widetilde{U} = V$, i.e., the bispecial factor $UwV$ is a palindrome.

We have shown that for any palindrome $w$ there exists  a palindromic
bispecial factor with length at least $|w|$. Since $\bu$ contains infinitely
many palindromes,  necessarily $\bu$ contains infinitely many palindromic bispecial factors.
\end{proof}


\begin{remark} In the proof of the previous lemma,  we have shown that any palindrome $w$ can be extended to a bispecial  factor of the form $Uw\widetilde{U}$, where $Uw\widetilde{U}$ is  the unique palindromic extension of $w$ with length $|Uw\widetilde{U}|$.  It implies that any palindromic branch  ${\bf p}:=  \cdots p_3p_2p_1p_0p_1p_2p_3\cdots $ contains infinitely many bispecial palindromes $P_m:= p_mp_{m-1}\cdots p_0\cdots
p_{m-1}p_m$.
\end{remark}

\section{Properties of well-marked morphisms}\label{wellMarked}

\begin{lemma}\label{lem:xpalindrome}
Let $\varphi$ be  a well-marked morphism. Denote  $\varphi_L$ and $\varphi_R$ the leftmost and the rightmost conjugate of $\varphi$, respectively.
Let $w \in \mathcal{A}^*$ be the conjugate word of the relation
 $\varphi_L\triangleright\varphi_R$.
If there exist $u, v \in\alphabet^*$ such that
\begin{equation}\label{mirror}
\widetilde{\varphi_R(u)w} = \varphi_R(v)w\,,
 \end{equation} then
 $w$ is a palindrome,
  $\widetilde{u}=v$ and
$\varphi_L(a) = \widetilde{\varphi_R}(a)$ for any letter $a$ occurring in $u$.
\end{lemma}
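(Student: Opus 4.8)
The plan is to rewrite hypothesis \eqref{mirror} into a single clean word equation and then read off all three conclusions essentially by inspection, using only the conjugacy relation, reversal, and the defining equalities of a well-marked morphism. First I would recall that $w$ is the conjugate word of $\varphi_L\triangleright\varphi_R$, so $\varphi_R(x)w=w\varphi_L(x)$ and $|\varphi_R(x)|=|\varphi_L(x)|$ for all $x\in\alphabet^*$; in particular $\varphi$ (hence $\varphi_L,\varphi_R$) is nonerasing. Taking the reversal of the left-hand side of \eqref{mirror} and using the identity $\widetilde{\varphi_R(u)}=\widetilde{\varphi_R}(\widetilde u)$ together with the relation $\varphi_R(v)w=w\varphi_L(v)$ on the right-hand side transforms \eqref{mirror} into the single equation
\[
\widetilde w\,\widetilde{\varphi_R}(\widetilde u)=w\,\varphi_L(v).
\]
This is the whole engine of the proof, so I would state it as the first displayed step.

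The palindromicity of $w$ then falls out immediately: both sides above are the \emph{same} word, so their prefixes of length $|w|=|\widetilde w|$ coincide, and these prefixes are exactly $\widetilde w$ on the left and $w$ on the right; hence $\widetilde w=w$. Cancelling this common prefix yields $\widetilde{\varphi_R}(\widetilde u)=\varphi_L(v)$, which is the key relation for the remaining two claims (note this also subsumes the degenerate case $u=\emptyword$, where the length count forces $\varphi_L(v)=\emptyword$, hence $v=\emptyword$ by nonerasingness).

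For the last two conclusions I would introduce $f:=\varphi_L$ and $g:=\widetilde{\varphi_R}$ and argue by induction on $|u|=|\widetilde u|$, peeling one letter at a time from the equation $g(\widetilde u)=f(v)$. The decisive point is that $\Fst(g)=\Fst(\widetilde{\varphi_R})=\Lst(\varphi_R)$, which by the \emph{well-marked} hypothesis equals $\Fst(\varphi_L)=\Fst(f)$ and is an injective map (a permutation of $\alphabet$). Comparing first letters of $g(\widetilde u)$ and $f(v)$ therefore forces the leading letters of $\widetilde u$ and $v$ to be equal; since $|\varphi_R(a)|=|\varphi_L(a)|$ for every letter $a$, the corresponding image blocks $g(a)$ and $f(a)$ have equal length and are prefixes of the same word, so they coincide, and one cancels them and recurses. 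The process ends simultaneously on both sides (equal total lengths), giving $\widetilde u=v$ and $g(a)=f(a)$, i.e. $\varphi_L(a)=\widetilde{\varphi_R}(a)$, for every letter $a$ occurring in $\widetilde u$, equivalently in $u$.

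I expect no genuinely hard obstacle once the opening rewriting is in place; the only real subtlety is recognizing that the proof requires the full well-marked equality $\Fst(\varphi_L)=\Lst(\varphi_R)$, not merely that $\varphi$ be marked. With two distinct (though injective) first-letter maps the peeling step would only match $\widetilde u$ and $v$ up to a fixed relabelling of letters, so the identity $\Fst(\varphi_L)=\Fst(\widetilde{\varphi_R})$ is precisely what upgrades this to $\widetilde u=v$; I would flag this explicitly. The secondary care point is the bookkeeping of lengths (using $|\varphi_R(a)|=|\varphi_L(a)|$) so that the induction handles images shorter or longer than $w$ uniformly, avoiding any appeal to Fine--Wilf.
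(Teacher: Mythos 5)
Your proposal is correct and follows essentially the same route as the paper's proof: rewriting \eqref{mirror} via conjugacy and reversal into $\widetilde{w}\,\widetilde{\varphi_R}(\widetilde{u}) = w\,\varphi_L(v)$, reading off $\widetilde{w}=w$ by comparing prefixes of length $|w|$, and then peeling letters one at a time using the injectivity of $\Fst(\varphi_L)=\Lst(\varphi_R)$ together with the equality of block lengths $|\varphi_L(a)|=|\varphi_R(a)|$. Your remark that the full well-marked equality (not mere markedness) is what forces $\widetilde{u}=v$ is exactly the point the paper exploits.
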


The hypothesis of this lemma can be made more general (injectivity of
$\varphi_L$ and $\varphi_R$ instead of well-marked) but we use it only for
well-marked morphisms.


\begin{proof}
Suppose $u=u_0u_1\cdots u_n$ and $v=v_0v_1\cdots v_m$. Due to
\eqref{mirror} we   have $\widetilde{w}\widetilde{\varphi_R(u)} =
w\varphi_L(v)$. It immediately gives $\widetilde{w} = w$ and
moreover, $\widetilde{\varphi_R(u_n)}\cdots
\widetilde{\varphi_R(u_0)} = \varphi_L(v_0)\cdots  \varphi_L(v_m)$.
%
Hence,
\[
\Fst(\varphi_L(v_0)) = \Fst(\widetilde{\varphi_R(u_n)}) =
\Lst(\varphi_R(u_n)).
\]
Since $\varphi$ is well-marked, then
\[
\Lst(\varphi_R(u_n))=\Fst(\varphi_L(u_n)).
\]
We get $\Fst(\varphi_L)(v_0)=\Fst(\varphi_L)(u_n)$. But
$\Fst(\varphi_L)$ is injective since $\varphi$ is well-marked. We
conclude that $u_n=v_0$.  As $|\varphi_L(a)| = |\varphi_R(a)|
=|\widetilde{\varphi_R}(a)| $ for any letter $a \in \mathcal{A}$, we
also deduce    $ \varphi_L(v_0) = \widetilde{\varphi_R}(v_0)$. It
implies $ \varphi_L(v_1) \cdots \varphi_L(v_m)=
\widetilde{\varphi_R}(u_{n-1}) \cdots \widetilde{\varphi_R}(u_0) $.
For the same reason as above, we have   $v_1=u_{n-1}$ and $
\varphi_L(v_1) = \widetilde{\varphi_R}(v_1)$ and $m=n$.
\end{proof}

\begin{proposition}\label{generatingBispecial}
Let $\varphi$ be a primitive marked morphism. Denote  $\varphi_L$
and $\varphi_R$ the leftmost and the rightmost conjugate of $\varphi$,
respectively.   Let $w \in \mathcal{A}^*$ be the conjugate word of the
relation  $\varphi_L\triangleright\varphi_R$.  Define
 $\Phi:  \mathcal{L}(\varphi) \mapsto  \mathcal{L}(\varphi)$  by
$$ \Phi(u) =  \varphi_R(u)w\,. $$
\begin{enumerate}
\item If $u \in \mathcal{L}(\varphi) $ is a left (resp. right) special factor, then $ \Phi(u)$  is a left (resp. right) special factor, too.

\item  There exist a finite number of bispecial factors, say
$u^{(1)},   u^{(2)}, \ldots, u^{(N)} \in  \mathcal{L}(\varphi)$, such that
 any bispecial factor of  $ \mathcal{L}(\varphi)$ equals $\Phi^n(u^{(j)})$ for some $j =1,2,\ldots, N$ and $n \in \mathbb{N}$, where  $\Phi^n$ denotes the $n^{th}$ iteration of $\Phi$.
\end{enumerate}
\end{proposition}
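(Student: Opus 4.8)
The plan is to prove the two assertions separately, the first by a direct computation with the conjugacy relation together with markedness, the second by a desubstitution argument resting on circularity. For assertion~1 I would use both factorizations of $\Phi$ afforded by the relation $\varphi_R(x)w=w\varphi_L(x)$, namely $\Phi(u)=\varphi_R(u)w=w\varphi_L(u)$. If $u$ is left special, pick distinct letters $a\neq b$ with $au,bu\in\mathcal L(\varphi)$; by Lemma~\ref{prolongBy_w} the words $\Phi(au)=\varphi_R(a)\Phi(u)$ and $\Phi(bu)=\varphi_R(b)\Phi(u)$ belong to $\mathcal L(\varphi)$, so $\Phi(u)$ admits the two left extensions $\Lst(\varphi_R)(a)$ and $\Lst(\varphi_R)(b)$, which are distinct because $\varphi$ is marked; hence $\Phi(u)$ is left special. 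Dually, if $u$ is right special with $ua,ub\in\mathcal L(\varphi)$, then $\varphi_R(a)w=w\varphi_L(a)$ gives $\Phi(ua)=\Phi(u)\varphi_L(a)$ and $\Phi(ub)=\Phi(u)\varphi_L(b)$ in $\mathcal L(\varphi)$, so $\Phi(u)$ admits the two distinct right extensions $\Fst(\varphi_L)(a)\neq\Fst(\varphi_L)(b)$ and is right special. Combining both cases shows $\Phi$ maps bispecial factors to bispecial factors.

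For assertion~2 I would first collect two structural facts about $\Phi$. It is injective because $\varphi_R$ is (a marked morphism is injective and injectivity is a conjugacy invariant, Lemma~\ref{lem:conjugationproperties}), so a factor has at most one $\Phi$-preimage. Moreover $\Phi^{n}(u)$ begins with $\varphi_R^{\,n}(u)$ while $\varphi_R$ is primitive, so $|\Phi^{n}(u)|\to\infty$ for every nonempty $u$, and consequently any chain of successive $\Phi$-preimages is finite. If the fixed point is eventually periodic the factor complexity is bounded, there are only finitely many bispecial factors, and the claim holds with all $n=0$; so I may assume $\bu$ aperiodic.

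The crux is then a desubstitution lemma: there is a constant $C$ such that every bispecial factor $B$ with $|B|>C$ can be written $B=\varphi_R(B')w=\Phi(B')$ for a shorter bispecial factor $B'$. To prove it I would invoke the circularity of the primitive morphism $\varphi$ on its aperiodic fixed point — this is exactly where Klouda's description \cite{MR2928192} of the bispecial factors of circular D0L systems enters — to obtain a synchronization threshold beyond which all occurrences of $B$ meet the $\varphi$-factorization of $\bu$ in the same phase. The left-special structure of $B$ then pins its left border to the cut whose preceding letters are the pairwise distinct last letters of the images, which is precisely the border encoded by $\varphi_R$; symmetrically the right-special structure pins the right border to the leftmost cut and peels off the trailing $w$. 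Reading the synchronized interior produces the candidate $B'$ with $\Phi(B')=B$, and transporting the two left and the two right extensions of $B$ back through the injective maps $\Lst(\varphi_R)$ and $\Fst(\varphi_L)$ shows that $B'$ is itself bispecial. The main obstacle is precisely this step: verifying that the synchronized borders are exactly a left block of the form $\varphi_R(\cdot)$ and a right block equal to $w$, with a genuinely bispecial (and not merely special) interior. This boundary bookkeeping, together with the reduction to a circular system, is where Klouda's theorem is indispensable.

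Granting the lemma, assertion~2 follows formally. By injectivity of $\Phi$ each bispecial factor $B$ has a unique backward chain $B,B',B'',\dots$ of bispecial $\Phi$-preimages; by the length growth recorded above this chain is finite and terminates at a bispecial factor of length at most $C$. Since there are only finitely many factors of length at most $C$, taking $u^{(1)},\dots,u^{(N)}$ to be the bispecial factors of length at most $C$ yields that every bispecial factor equals $\Phi^{n}(u^{(j)})$ for some $j\in\{1,\dots,N\}$ and some $n\in\mathbb N$.
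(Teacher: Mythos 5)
Your proposal is correct and follows essentially the same route as the paper: part~1 is the identical argument (Lemma~\ref{prolongBy_w} together with injectivity of $\Lst(\varphi_R)$ and $\Fst(\varphi_L)$), and part~2 rests, exactly as in the paper, on Klouda's description of bispecial factors (Theorem~\ref{thm:klouda}) applied to $\varphi_R$, where markedness forces the left boundary word of the desubstitution to be empty and the right boundary word to be $w$, giving $B=\varphi_R(B')w=\Phi(B')$. The only difference is cosmetic: you make explicit the termination bookkeeping (injectivity of $\Phi$ and the growth of $|\Phi^n(u)|$) that the paper leaves implicit.
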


The proof of the proposition is based on results of Klouda from \cite{MR2928192}.
They concern a  very broad class of circular non-pushy $D0L$-systems. Any
primitive injective morphism belongs to this class.
For two words $x,y \in \mathcal{A}^*$,  we define $\LCP\{x,y\}$
 to be the longest common prefix of $x$ and $y$   and  $\LCS\{x,y\}$   to be the longest common suffix of $x$ and $y$.
Let us summarize the relevant consequences of  Theorems 22  and 36 from
\cite{MR2928192} in the case of injective primitive morphism.

\begin{theorem}[Klouda, \cite{MR2928192}]\label{thm:klouda}  
Let $\varphi$ be an injective primitive morphism. Then,
there exist a finite set $\mathcal{I}\subset\L(\varphi)$ of bispecial factors
and  two  finite sets  $\mathcal{B}_L$  and $ \mathcal{B}_R$
satisfying
\begin{align*}
    &\mathcal{B}_L \subseteq \left\{ \LCS\{ \varphi(x), \varphi(y)\} :
        x,y\in\mathcal{A}^+,
    \Lst(x)\neq\Lst(y)\right\},\\
 &\mathcal{B}_R \subseteq \left\{ \LCP\{ \varphi(x), \varphi(y)\} :
        x,y\in\mathcal{A}^+,
    \Fst(x)\neq\Fst(y)\right\}
\end{align*}
such that
any bispecial factor  $u \in \L(\varphi)\setminus\mathcal{I}$  has the form  $u=  f_L\, \varphi(u')\, f_R$, where
 $u'\in\L(\varphi) $  is a bispecial factor,  $f_L \in \mathcal{B}_L$  and $f_R \in \mathcal{B}_R$.
\end{theorem}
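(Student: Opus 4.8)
The statement is the specialization to injective primitive morphisms of the bispecial-structure theorem for circular $D0L$-systems, so one legitimate route is to note (as the excerpt already records) that an injective primitive morphism is circular and non-pushy, and then quote Theorems 22 and 36 of \cite{MR2928192}. I sketch instead the self-contained argument that underlies it. The single essential input is \emph{recognizability} (circularity): because $\varphi$ is injective and primitive, there is a constant $L$ such that every factor $v\in\L(\varphi)$ with $|v|\geq L$ admits a \emph{unique interpretation}. Concretely, writing an occurrence of $v$ inside the fixed point $\bu=\varphi(\bu)$, there are a unique word $u'\in\L(\varphi)$, a unique (possibly empty) proper suffix $f_L$ of some image $\varphi(a)$ and a unique proper prefix $f_R$ of some image $\varphi(b)$ with $au'b\in\L(\varphi)$ and
\[
v=f_L\,\varphi(u')\,f_R,
\]
where the central data $(f_L,u',f_R)$ are the same for every occurrence of $v$ and only the flanking letters $a,b$ may vary across occurrences. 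I would establish this from Mossé-type recognizability (equivalently Klouda's circularity), which is exactly where injectivity and primitivity enter.

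Next I would set the finite exceptional set $\mathcal{I}$ to consist of the bispecial factors of length $<L$ (finite, since the alphabet is finite), take an arbitrary bispecial $u\in\L(\varphi)\setminus\mathcal{I}$, and apply recognizability to write $u=f_L\,\varphi(u')\,f_R$. The heart of the proof is transferring both special properties from $u$ to $u'$. For the right side, $u$ right special gives distinct letters $e\neq e'$ with $ue,ue'\in\L(\varphi)$; each right extension continues $u$ past $f_R$ into the image of the block following $u'$, and since a fixed image has a determined continuation, the two extensions must come from two distinct following letters $b\neq b'$ with $u'b,u'b'\in\L(\varphi)$. Hence $u'$ is right special, and comparing the positions where $e$ and $e'$ sit forces $f_R=\LCP\{\varphi(b),\varphi(b')\}$ with $\Fst(b)\neq\Fst(b')$, so $f_R\in\mathcal{B}_R$. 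The mirror argument on the left shows $u'$ is left special and $f_L=\LCS\{\varphi(a),\varphi(a')\}$ with $\Lst(a)\neq\Lst(a')$, so $f_L\in\mathcal{B}_L$. Therefore $u'$ is bispecial and $u$ has the claimed form.

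Finally I would verify finiteness of the glue sets: each element of $\mathcal{B}_R$ is a common prefix of two letter-images, hence a prefix of some $\varphi(c)$, so $\mathcal{B}_R\subseteq\{\,\text{prefixes of }\varphi(c):c\in\alphabet\,\}$ is finite, and symmetrically for $\mathcal{B}_L$. The main obstacle is the recognizability step and its precise calibration: one must prove that long factors desubstitute uniquely and that, after desubstitution, a \emph{single} appended letter on each side already witnesses a branching at the block level rather than inside an image block. It is exactly the threshold $L$ guaranteeing this that pins down the exceptional set $\mathcal{I}$, and the transfer argument must ensure the extensions $e,e'$ diverge precisely at the end of $f_R$, yielding $f_R=\LCP\{\varphi(b),\varphi(b')\}$ and not a shorter or longer common prefix.
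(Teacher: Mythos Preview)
The paper does not prove this theorem at all: it is quoted as a black box from Klouda~\cite{MR2928192}, with the sole remark that injective primitive morphisms fall into Klouda's class of circular non-pushy $D0L$-systems so that his Theorems~22 and~36 apply. Your proposal already identifies this citation route explicitly and then supplies a self-contained sketch; that sketch is the standard recognizability argument underlying Klouda's result and is essentially correct in the aperiodic case.

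Two small points are worth flagging. First, Moss\'e-type recognizability in the form you invoke requires the fixed point to be aperiodic, whereas the statement covers all injective primitive morphisms, including those with periodic fixed points (e.g.\ $a\mapsto aba$, $b\mapsto bab$); there your synchronization constant $L$ does not exist, but the theorem then holds trivially since the set of bispecial factors is finite and can be absorbed into $\mathcal{I}$. Second, your argument actually yields the sharper conclusion $f_R=\LCP\{\varphi(b),\varphi(b')\}$ for single \emph{letters} $b\neq b'$, whereas the stated theorem allows $x,y\in\alphabet^+$; the extra generality is needed in Klouda's broader framework but is harmless here, and your version suffices for the application in the paper (Proposition~\ref{generatingBispecial}).
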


\begin{proof}[Proof of Proposition~\ref{generatingBispecial}]
According to Lemma~\ref{prolongBy_w}, the definition of the mapping  $\Phi$ is correct.
There exist $a, b \in \mathcal{A}$, $a\neq b$ such that  $au, bu \in \mathcal{L}(\varphi)$. According to Lemma~\ref{prolongBy_w},   words  $\varphi_R(a) \varphi_R(u)w$ and   $\varphi_R(b) \varphi_R(u)w$ belong to $\mathcal{L}(\varphi)$ too. Since $\varphi$ is marked, the last letters of  $\varphi_R(a)$ and $\varphi_R(b)$ differ and thus  $ \varphi_R(u)w$  is left special.

Analogously,  there exist $c, d \in \mathcal{A}$, $c\neq d$ such that  $uc, ud \in \mathcal{L}(\varphi)$. According to Lemma~\ref{prolongBy_w},   words  $ \varphi_R(u)\varphi_R(c)w=\varphi_R(u)w\varphi_L(c) $ and   $ \varphi_R(u)\varphi_R(d)w =\varphi_R(u)w\varphi_L(d) $ belong to $\mathcal{L}(\varphi)$ too. Since $\varphi$ is marked, the first  letters of  $\varphi_L(c)$ and $\varphi_L(d)$ differ and thus  $ \varphi_R(u)w$  is right special.

Let us apply Theorem~\ref{thm:klouda} to the morphism $\varphi_R$. Since
$\Lst(\varphi_R)$ is  injective, $\LCS\{\varphi_R(x),\varphi_R(y)\}$ is
empty for any pair of nonempty words $x,  y$  with distinct last letters.
Thus   the set $\mathcal{B}_L$ contains only the empty word.   On the
other hand,   since $\varphi_R(x)w=w\varphi_L(x)$ and
$\varphi_R(y)w=w\varphi_L(y)$  and  $\Fst(\varphi_L)$ is  injective,
we have  $\LCP\{\varphi_R(x),\varphi_R(y)\} = w$ for any 
pair of nonempty words $x,y$  with distinct first letters. Therefore,  the set $\mathcal{B}_R$ contains only the word $w$.   It means that any
bispecial factor $u$ from $\mathcal{L}(\varphi_R)\setminus\mathcal{I}$ where
$\mathcal{I}$ is finite equals to $ \varphi_R(u')w = \Phi(u')$ for some
bispecial factor $u'\in\L(\varphi_R)$.
\end{proof}

The fixed point of a cyclic primitive morphism is periodic.
As we have already illustrated on the example 
$$\xi(a)=aba \quad \text{and}\quad \xi(b)= bab$$
the converse does not hold. This  morphisms has two periodic  fixed  points
$(ab)^\omega$   and  $(ba)^\omega$. Let us point out that the shortest  period
is 2 and  both letters of the binary alphabet occur in any factor  of length 2.
Moreover,  the morphism $\xi$  is primitive and well-marked, i.e., it
satisfies the assumption of the previous propositions. The following corollary
describes morphisms of this type.

\begin{corollary}\label{formOfMorphism} Let $\varphi$ be a primitive marked
morphism over an alphabet $\mathcal{A}$  and $\bu$ be a fixed point of~$\varphi$. If $\bu$ is eventually periodic, then  there exists a word $w \in \mathcal{A}^*$ such that
$\bu = w^\omega$, $|w|=\Card\,\alphabet$, every letter of  $\mathcal{A}$
occurs exactly once in $w$ and $\varphi(w) = w^k$ for some $k\in
\mathbb{N}$.
\end{corollary}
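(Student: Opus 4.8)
The plan is to first upgrade eventual periodicity to pure periodicity, then to use Proposition~\ref{generatingBispecial} to rule out long special factors, and finally to read off the announced structure. Since $\varphi$ is primitive, it is well known that its fixed point $\bu$ is uniformly recurrent and that every letter of $\alphabet$ occurs in it infinitely often; a uniformly recurrent eventually periodic word is purely periodic, so $\bu=w^\omega$ where $w$ is the minimal period, of length $p:=|w|$. It then suffices to prove that $p=\Card\alphabet$ and that each letter occurs exactly once in $w$, after which the relation $\varphi(w)=w^k$ will follow.

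The heart of the argument is the claim that $\bu$ has no nonempty right special factor. First I would record that a purely periodic word has bounded factor complexity, hence only finitely many right special factors: concretely, a factor of length $\ge p$ determines its position modulo $p$ (otherwise two occurrences at different residues would yield a period of $w$ smaller than $p$), so it has a unique right extension and cannot be right special. Now suppose for contradiction that some nonempty $u\in\L(\varphi)$ is right special, and let $\Phi\colon\L(\varphi)\to\L(\varphi)$ be the map of Proposition~\ref{generatingBispecial}; this is exactly where markedness is used. By part~(1) of that proposition, $\Phi(u)$, and inductively each $\Phi^n(u)$, is again right special. Since $|\Phi^n(u)|\ge|\varphi^n(u)|$ and $\varphi$ is primitive and prolongable (so its incidence matrix has Perron eigenvalue larger than $1$, as $|\varphi(a)|\ge 2$ for the letter $a$ with $\varphi(a)=a\cdots$), the lengths $|\Phi^n(u)|$ tend to infinity. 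Thus $\bu$ would have right special factors of arbitrarily large length, contradicting their boundedness. Hence no nonempty right special factor exists. The main obstacle is precisely this paragraph: pinning down the boundedness of special factors in the periodic word and feeding a right special factor into $\Phi$ so that Perron growth forces the contradiction.

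Consequently every letter $a\in\alphabet$, viewed as a factor of length $1$, has a unique right extension $\sigma(a)\in\alphabet$, defining a map $\sigma\colon\alphabet\to\alphabet$; and $\bu$ is exactly the $\sigma$-orbit $a_0\,\sigma(a_0)\,\sigma^2(a_0)\cdots$ of its first letter. As $\alphabet$ is finite this orbit is eventually periodic, and its eventual cycle is a cycle of the functional graph of $\sigma$, whose vertices are therefore pairwise distinct. Because every letter occurs infinitely often in $\bu$, no letter can lie in a pre-periodic tail, so the orbit is a single cycle visiting every letter exactly once. Therefore $w$ equals this cycle: $|w|=\Card\alphabet$, each letter of $\alphabet$ occurs exactly once in $w$, and $\sigma$ is the induced cyclic permutation.

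It remains to prove $\varphi(w)=w^k$. Applying $\varphi$ to $\bu=w^\omega$ gives $\varphi(w)^\omega=\varphi(\bu)=\bu=w^\omega$, so $\varphi(w)$ is a prefix of $\bu$ whose length is a period of $\bu$. Since the set of periods of an infinite periodic word is closed under $\gcd$ and $p=|w|$ is the minimal period, every period is a multiple of $p$; hence $|\varphi(w)|=kp$ for some $k\in\N$, and the length-$kp$ prefix of $w^\omega$ is exactly $w^k$.
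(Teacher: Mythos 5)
Your proof is correct and rests on essentially the same approach as the paper's: both hinge on Point 1 of Proposition~\ref{generatingBispecial} combined with the fact that an (eventually) periodic word has only finitely many special factors, to conclude that $\mathcal{L}(\varphi)$ contains no nonempty special factor, from which the structure of $w$ follows. The remaining bookkeeping differs only in presentation --- you get pure periodicity from uniform recurrence and deduce $\varphi(w)=w^k$ from Fine--Wilf (the minimal period divides every period), whereas the paper kills the preperiod via a left-special argument and proves $\varphi(w)=w^k$ by noting that every prefix of $\bu$ beginning and ending with the first letter $c$ of $w$ has the form $w^kc$; your functional-graph argument for ``each letter occurs exactly once'' is just a more detailed rendering of the paper's one-line claim.
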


\begin{proof}
According to Point 1 of Proposition \ref{generatingBispecial}, if
$\mathcal{L}(\varphi)$ contains  one left or right special  factor  of length
at least 1, then it contains infinitely many left and right special factors.
Since  language of an eventually periodic word  has only finitely many left
and right special factors, necessarily  there are no nonempty special factors
in $\mathcal{L}(\varphi)$ at all. Let $\bu = v w^\omega$, where $|w|$ is the
shortest period and $|v|$ is the shortest preperiod of $\bu$. Obviously,  $v$ is
empty, otherwise the first letter of $w$ is left special. For the
same reason, any letter of the alphabet occurs in $w$  exactly once.

Let us denote by $c$ the first letter of $w$.  As   $\bu$ is  a
fixed point of $\varphi$  the first letter of $\varphi(c)$ is $c$.  Since any
letter occurs in $w$ exactly once, any prefix $u$ of  $\bu $  which starts
and ends by the  letter $c$  has the form $u=w^kc$ for some $k\in \mathbb{N}$.
As  $\varphi(w)c$  is a prefix of $\varphi(ww) = \varphi(w)\varphi(w)$ which
is a prefix  $\bu $, the word
$\varphi(w)c = w^kc$  for some $k\in \mathbb{N}$.
\end{proof}

\begin{example}
The morphism $\varphi$ defined  by $a \mapsto abcabcab$, $b \mapsto cabca$ and
$c\mapsto bc$  is a primitive marked morphism over the alphabet $\{a,b,c\}$.
Its fixed point is  $(abc)^\omega$ and $\varphi(abc) = (abc)^5$.  But the
language of the fixed point  $(abc)^\omega$ unlike the fixed point
$(ab)^\omega$ of the morphisms $\xi$
is not palindromic.
\end{example}

\begin{corollary}\label{onlyBinaryAlphabet}
Let $\varphi$ be a primitive marked morphism over an alphabet $\mathcal{A}$
and $\bu$ be an eventually periodic fixed point of $\varphi$. If $\bu$
is palindromic, then  $\mathcal{A}$ is a binary alphabet and $\varphi$ belongs
to class $\P$.
\end{corollary}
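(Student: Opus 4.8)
The plan is to bootstrap off the structural result already proved in Corollary~\ref{formOfMorphism} and then extract the palindromic information. First I would invoke Corollary~\ref{formOfMorphism}: since $\varphi$ is primitive and marked and $\bu$ is an eventually periodic fixed point, there is a word $w$ with $\bu=w^\omega$, $|w|=\Card\alphabet$, every letter of $\alphabet$ occurring exactly once in $w$, and $\varphi(w)=w^k$ for some $k$. Because each letter occurs once, $w$ cannot be a power of a shorter word, so $w$ is the primitive minimal period of $\bu$. Being purely periodic, $\bu$ is uniformly recurrent, so by Lemma~\ref{lem:CuR} the language $\L(\bu)$ is closed under reversal, and then Lemma~\ref{lem:PeriodicCuR} applies: its proof produces palindromes $p,q$ that are factors of the minimal period and satisfy $w=pq$.

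The load-bearing observation is that a palindrome whose letters are pairwise distinct has length at most $1$, since a palindrome of length $\geq 2$ repeats its first letter at its last position. As $p$ and $q$ are factors of $w$, in which every letter occurs exactly once, they are palindromes with pairwise distinct letters, so $|p|,|q|\leq 1$ and hence $\Card\alphabet=|w|=|p|+|q|\leq 2$. A one-letter alphabet would force $\varphi(a)=a^k\in\{a\}^*$, making $\varphi$ cyclic and contradicting the acyclicity built into the definition of marked morphism (Definition~\ref{def:marked}). Therefore $\Card\alphabet=2$ and $\alphabet$ is binary.

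It then remains to show $\varphi\in\P$. Write $\alphabet=\{a,b\}$ with $w=ab$, which is legitimate since $\bu$ starts with the first letter of $w$. From $\varphi(w)=w^k$ we get $\varphi(a)\varphi(b)=(ab)^k$, so $\varphi(a)$ is the prefix of $(ab)^k$ of length $|\varphi(a)|$ and $\varphi(b)$ is the complementary suffix. If both lengths were even, both images would lie in $(ab)^*$ and $\varphi$ would again be cyclic; acyclicity excludes this, so both $|\varphi(a)|$ and $|\varphi(b)|$ are odd. Consequently $\varphi(a)=(ab)^i a$ and $\varphi(b)=b(ab)^j$, and both of these words are palindromes. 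Taking the empty word as the palindrome prefix in Definition~\ref{classP} then shows $\varphi\in\P$, completing the argument.

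The two appeals to the earlier lemmas are routine, and I expect the genuinely decisive step to be the distinct-letters bound that collapses the alphabet to size two, followed by the short parity analysis (using acyclicity a second time) that identifies the images as palindromes. The only point requiring care is to confirm that the period $w$ furnished by Corollary~\ref{formOfMorphism} is exactly the one decomposed by Lemma~\ref{lem:PeriodicCuR}; this holds because both refer to the primitive minimal period of $\bu$, so the decomposition $w=pq$ with $|p|+|q|=\Card\alphabet$ is available.
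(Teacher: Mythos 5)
Your proof is correct and takes essentially the same route as the paper's: invoke Corollary~\ref{formOfMorphism} to get $\bu=w^\omega$ with $\varphi(w)=w^k$ and each letter occurring once in $w$, apply Lemma~\ref{lem:PeriodicCuR} to write $w=pq$ with $p,q$ palindromes (forcing $|w|\leq 2$ since a palindrome on distinct letters has length at most $1$), and then use markedness/acyclicity to exclude $\varphi(a),\varphi(b)\in(ab)^*$, leaving the palindromic images $\varphi(a)=(ab)^\ell a$ and $\varphi(b)=b(ab)^{k-\ell-1}$, which are in class~$\P$. Your extra care (citing Lemma~\ref{lem:CuR} for closure under reversal, ruling out the unary alphabet, and checking that the two lemmas refer to the same minimal period) merely makes explicit steps the paper leaves implicit.
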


\begin{proof}  According to Corollary \ref{formOfMorphism} $\bu = w^\omega$
and the period $w$ contains each letter of alphabet exactly once.  Due to
Lemma \ref{lem:PeriodicCuR} the period $w = pq$, where $p$ and $q$ are
palindromes. The only possibility is that $|p|=|q| =1$  and the period has the
form $w=ab$, where $a,b \in \mathcal{A}$, $a\neq b$.  Thus, the  cardinality
of the alphabet is $|w|=2$.

Corollary \ref{formOfMorphism} moreover says that $\varphi(ab) =(ab)^k$. If $\varphi(a)= (ab)^\ell$  for some $\ell <k$ then $\varphi(b)= (ab)^{k-\ell}$  and the morphism $\varphi$ is not marked. Therefore,  $\varphi(a)= (ab)^\ell a$ and
$\varphi(b)= b(ab)^{k-\ell-1}$ and obviously belongs to class $\P$.
\end{proof}

\section{Proof of Theorem~\ref{thm:main}}\label{proofTheorem}

\begin{proposition}\label{prop:leftrightmarkedhks}
Let $\varphi:\alphabet^*\to\alphabet^*$ be  a primitive and
well-marked morphism. If the language   $\L(\varphi)$ is palindromic, then $\varphi$ has a
conjugate in class $\P$.
\end{proposition}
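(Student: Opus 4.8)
The plan is to split according to whether a fixed point $\bu$ of $\varphi$ is eventually periodic or not. If $\bu$ is eventually periodic, then Corollary~\ref{onlyBinaryAlphabet} immediately gives that $\varphi$ itself lies in class~$\P$, and there is nothing more to prove. So the substance of the argument is the aperiodic case, and my target is to verify condition~(4) of Lemma~\ref{lem:oneblargerthanw}, namely $\varphi_L=\til{\varphi_R}$; since conditions~(1)--(5) of that lemma are equivalent, establishing~(4) yields at once that $\varphi$ has a conjugate in class~$\P$.

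Assume then that $\bu$ is aperiodic. As $\varphi$ is primitive, $\bu$ is uniformly recurrent, so by Lemma~\ref{lem:infty_bisp_pal} the language $\L(\varphi)$ contains infinitely many palindromic bispecial factors. On the other hand, writing $\Phi(u)=\varphi_R(u)w$ as in Proposition~\ref{generatingBispecial}, that proposition furnishes finitely many seed bispecial factors $u^{(1)},\dots,u^{(N)}$ such that every bispecial factor has the form $\Phi^n(u^{(j)})$. Combining these two facts by the pigeonhole principle, I would fix a single seed $u^{(j)}$ for which infinitely many of the words $\Phi^n(u^{(j)})$, $n\in\N$, are palindromes.

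Now I would exploit the well-marked hypothesis through Lemma~\ref{lem:xpalindrome}. For each such $n\geq 1$, set $x=\Phi^{n-1}(u^{(j)})$, so that $\Phi^n(u^{(j)})=\varphi_R(x)w$ is a palindrome; that is, $\widetilde{\varphi_R(x)w}=\varphi_R(x)w$, which is exactly Equation~\eqref{mirror} with $u=v=x$. Lemma~\ref{lem:xpalindrome} then yields that $w$ is a palindrome and, crucially, that $\varphi_L(a)=\til{\varphi_R}(a)$ for every letter $a$ occurring in $x=\Phi^{n-1}(u^{(j)})$. Because there are infinitely many such $n$ and the iterates grow in length, the predecessors $\Phi^{n-1}(u^{(j)})$ are factors of $\bu$ of unbounded length; by uniform recurrence (equivalently, primitivity) a sufficiently long factor contains every letter of $\alphabet$. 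Choosing $n$ large enough that $\Phi^{n-1}(u^{(j)})$ exhausts $\alphabet$, I conclude $\varphi_L(a)=\til{\varphi_R}(a)$ for all $a\in\alphabet$, i.e.\ $\varphi_L=\til{\varphi_R}$, which is condition~(4).

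The main obstacle I anticipate is the bookkeeping needed to guarantee that the argument word $\Phi^{n-1}(u^{(j)})$ really does exhaust the alphabet: one must be sure the chosen seed produces genuinely longer and longer iterates, so that the palindromes $\Phi^n(u^{(j)})$ are distinct and unbounded in length, and that Proposition~\ref{generatingBispecial} legitimately captures all of the infinitely many palindromic bispecials through the finite seed set. Everything else is a direct chaining of the cited lemmas, with the well-marked hypothesis entering only, but essentially, via Lemma~\ref{lem:xpalindrome}.
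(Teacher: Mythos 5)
Your proposal is correct and follows essentially the same route as the paper's own proof: dispose of the eventually periodic case via Corollary~\ref{onlyBinaryAlphabet}, then in the aperiodic case combine Lemma~\ref{lem:infty_bisp_pal} with Proposition~\ref{generatingBispecial} to produce a long palindromic bispecial factor of the form $\varphi_R(u')w$, apply Lemma~\ref{lem:xpalindrome} with $u=v=u'$ (where $u'$ contains every letter, by uniform recurrence) to get $\varphi_L=\til{\varphi_R}$, and conclude by Lemma~\ref{lem:oneblargerthanw}. Your pigeonhole bookkeeping over the finitely many seeds is just a slightly more explicit version of the paper's ``take a long enough bispecial palindrome,'' so there is no substantive difference.
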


\begin{proof}
Because of Corollary \ref{onlyBinaryAlphabet}, we can focus on $\varphi$ with aperiodic fixed points.    Since $\varphi$ is primitive,  the language   $\L(\varphi)$ is uniformly recurrent and thus there exists a constant $K$ such that any factor longer than $K$ contains all letters from  $\mathcal{A}$.
From Lemma~\ref{lem:infty_bisp_pal}, $\L(\varphi)$ contains an
infinite number of bispecial palindromes. If a bispecial palindrome
$u$  is long enough, then  according to Proposition
\ref{generatingBispecial}  there exists   $u'$ such that $u =
\varphi_R(u')w = w \varphi_L(u')$. Without loss of generality, we
can assume that $u'$ is longer than $K$ and thus all letters  of
$\mathcal{A}$ occur in $u'$.  Due to Lemma  \ref{lem:xpalindrome},
$\varphi_L = \widetilde{\varphi_R}$. This together with
Lemma~\ref{lem:oneblargerthanw} implies the statement.
\end{proof}

We may now prove the main result, namely that Version 1 of HKS Conjecture holds for
general alphabet for the case of marked morphisms.

\begin{proof}[Proof of Theorem~\ref{thm:main}]
From Lemma~\ref{lem:powerwellmarked}, $\varphi^k$ is well-marked for some
integer $k$.
From Proposition~\ref{prop:leftrightmarkedhks}, $\varphi^k$ has a conjugate in
class $\P$.
\end{proof}

The result of Tan thus becomes a corollary of this theorem.

\begin{corollary}\label{cor:binaryhks}
Let $\alphabet$ be a binary alphabet and $\varphi:\alphabet^*\to\alphabet^*$
be an acyclic and primitive morphism.
  If the language   $\L(\varphi)$ is palindromic, then $\varphi$ or
  $\varphi^2$ has a conjugate in class $\P$.
\end{corollary}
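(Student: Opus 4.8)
The plan is to deduce this from the general machinery by checking that, over a binary alphabet, the hypotheses of the main results become automatic. Since $\varphi$ is acyclic, Lemma~\ref{lem:leftmostexistence} guarantees that both the leftmost conjugate $\varphi_L$ and the rightmost conjugate $\varphi_R$ exist. By the very definition of these conjugates, $\Fst(\varphi_L)$ and $\Lst(\varphi_R)$ are non-constant maps from $\alphabet$ to $\alphabet$. On a two-letter alphabet a non-constant self-map is necessarily a bijection, hence injective, so $\varphi$ is marked in the sense of Definition~\ref{def:marked}. This elementary upgrade from ``non-constant'' to ``injective'' is the only place where the binary hypothesis is genuinely used.

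Next I would pin down the precise power needed. Following the proof of Lemma~\ref{lem:powerwellmarked}, $\varphi_L^2$ is the leftmost conjugate of $\varphi^2$ and $\varphi_R^2$ is its rightmost conjugate, with $\Fst(\varphi_L^2)=\Fst(\varphi_L)^2$ and $\Lst(\varphi_R^2)=\Lst(\varphi_R)^2$. Every permutation of a two-element set squares to the identity, so $\Fst(\varphi_L^2)=\Id=\Lst(\varphi_R^2)$; in particular $\varphi^2$ is well-marked. This is exactly the binary case flagged in the discussion after Lemma~\ref{lem:powerwellmarked}, where the sufficient exponent is $a(2)=2$. If it should happen that $\Fst(\varphi_L)=\Lst(\varphi_R)$ already, then $\varphi$ itself is well-marked, and this dichotomy is precisely what produces the disjunction ``$\varphi$ or $\varphi^2$''.

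To finish I would invoke Proposition~\ref{prop:leftrightmarkedhks}. Since $\varphi$ is primitive, so is $\varphi^2$, and any fixed point $\bu$ of $\varphi$ is also a fixed point of $\varphi^2$; as both morphisms are primitive, their languages coincide with $\L(\bu)$, whence $\L(\varphi^2)=\L(\varphi)$ is palindromic by hypothesis. Applying Proposition~\ref{prop:leftrightmarkedhks} to the primitive well-marked morphism $\varphi^2$ yields a conjugate of $\varphi^2$ in class $\P$, which proves the corollary. If instead $\varphi$ is itself well-marked, the same proposition applied directly to $\varphi$ gives a conjugate of $\varphi$ in class $\P$, so in either case one of $\varphi,\varphi^2$ qualifies.

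I do not expect a substantive obstacle: all the analytic content is already packaged in Proposition~\ref{prop:leftrightmarkedhks}, and behind it Klouda's description of bispecial factors together with Lemma~\ref{lem:xpalindrome}. The only things to verify are the two elementary binary observations above and the bookkeeping identity $\L(\varphi^2)=\L(\varphi)$. The mildest care is needed only in phrasing the conclusion as the stated disjunction rather than merely asserting that $\varphi^2$ works.
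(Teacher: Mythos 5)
Your proof is correct and takes essentially the same route as the paper's own (very terse) proof: an acyclic binary morphism is automatically marked because the non-constant maps $\Fst(\varphi_L)$ and $\Lst(\varphi_R)$ are bijections on two letters, squaring then forces well-markedness since permutations of a two-element set square to the identity, and Proposition~\ref{prop:leftrightmarkedhks} applied to whichever of $\varphi$, $\varphi^2$ is well-marked finishes the argument. You have simply filled in the details that the paper compresses into two sentences, including the bookkeeping $\L(\varphi^2)=\L(\varphi)$.
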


\begin{proof}
Any acyclic binary morphism $\varphi$ is marked. If $\varphi$ is not
well-marked then so is $\varphi^2$.
\end{proof}

It turns out that the square $\varphi^2$ is necessary only in some particular
cases for a binary alphabet $\alphabet=\{a,b\}$, namely when
$|w|<|\varphi(a)|$ and $|w|<|\varphi(b)|$.\\

As we have already mentioned,  for a uniformly recurrent word $\bu$,  the
closedness of languages under reversal  does not imply  that  $\bu$
is palindromic.   The following proposition is  an analogy of Theorem 3.13
from \cite{MR2363365} for larger alphabet, which is stated for  binary alphabet.

\begin{proposition}\label{prop:markedMirror}
Let $\varphi:\alphabet^*\to\alphabet^*$ be  a primitive
and marked morphism. If $\L(\varphi)$  is closed under reversal,
then $\L(\varphi)$  is palindromic.
\end{proposition}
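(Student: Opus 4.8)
The plan is to reduce to the well-marked case and then use the closure under reversal to force the leftmost and rightmost conjugates of $\varphi$ to be reversals of one another; by Lemma~\ref{lem:oneblargerthanw} this produces a conjugate in class $\P$, and since the language is invariant under conjugacy (Lemma~\ref{lem:conjugationproperties}) while primitive class $\P$ morphisms have palindromic language, the claim follows. First, by Lemma~\ref{lem:powerwellmarked} some power $\varphi^k$ is well-marked; as $\varphi$ is primitive, $\L(\varphi^k)=\L(\varphi)$, so I may assume $\varphi$ itself is well-marked. If the fixed point $\bu$ is eventually periodic, then since $\L(\varphi)$ is closed under reversal, Lemma~\ref{lem:PeriodicCuR} gives $\bu=(pq)^\omega$ with $p,q$ palindromes; the central factors $p(qp)^n$ and $q(pq)^n$ are then arbitrarily long palindromes, so $\L(\varphi)$ is palindromic and we are done. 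Hence I may assume $\bu$ is aperiodic.

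In the aperiodic case I first observe that $\L(\varphi)$ contains arbitrarily long bispecial factors: the complete return word construction appearing in the proof of Lemma~\ref{lem:infty_bisp_pal} produces, for every factor, a bispecial factor containing it, and that part of the argument uses only uniform recurrence, not palindromicity. By Proposition~\ref{generatingBispecial}(2) every bispecial factor has the form $\Phi^n(u^{(j)})$ for one of finitely many seeds $u^{(1)},\dots,u^{(N)}$, so any bispecial factor $B$ longer than all seeds satisfies $B=\Phi(u')=\varphi_R(u')w$ for some bispecial factor $u'$. Taking $B$ long forces $u'$ to be long (since the image lengths of $\varphi_R$ are bounded per letter), hence by uniform recurrence $u'$ contains every letter of $\alphabet$.

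Because $\L(\varphi)$ is closed under reversal, the reversal $\widetilde{B}$ is again a bispecial factor of the same length (the two left and the two right extensions of $B$ reverse to the two right and two left extensions of $\widetilde{B}$). Choosing $B$ long enough that $\widetilde{B}$ also exceeds all seed lengths, Proposition~\ref{generatingBispecial}(2) gives $\widetilde{B}=\Phi(v')=\varphi_R(v')w$ for some bispecial $v'$. Thus $\widetilde{\varphi_R(u')w}=\varphi_R(v')w$, which is exactly the hypothesis of Lemma~\ref{lem:xpalindrome}. Applying that lemma yields that $w$ is a palindrome and that $\varphi_L(a)=\widetilde{\varphi_R}(a)$ for every letter $a$ occurring in $u'$; since $u'$ contains all letters, this upgrades to $\varphi_L=\widetilde{\varphi_R}$, which is condition (4) of Lemma~\ref{lem:oneblargerthanw}.

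Consequently $\varphi$ has a conjugate $\psi$ in class $\P$. By Lemma~\ref{lem:conjugationproperties}, $\psi$ is primitive and $\L(\psi)=\L(\varphi)$, and the fixed point of a primitive class $\P$ morphism is palindromic; therefore $\L(\varphi)$ is palindromic. I expect the main obstacle to be precisely the aperiodic step: guaranteeing a long bispecial factor $B$ whose $\Phi$-preimage $u'$ already contains every letter of the alphabet while its reversal $\widetilde{B}$ still lies in the image of $\Phi$. This is what lets Lemma~\ref{lem:xpalindrome} deliver the full identity $\varphi_L=\widetilde{\varphi_R}$ on all of $\alphabet$ rather than merely on the letters of some proper subword, and hence is the crux of invoking Lemma~\ref{lem:oneblargerthanw}.
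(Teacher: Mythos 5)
Your proof is correct and follows essentially the same route as the paper's: reduce to a well-marked power, produce a long bispecial factor whose reversal is again bispecial, use Proposition~\ref{generatingBispecial} to write both as $\Phi$-images and obtain $\widetilde{\varphi_R(u')w}=\varphi_R(v')w$, invoke Lemma~\ref{lem:xpalindrome} to get $\varphi_L=\widetilde{\varphi_R}$ on the whole alphabet, and conclude via Lemma~\ref{lem:oneblargerthanw} together with conjugacy-invariance of the language. The only differences are that you explicitly dispose of the eventually periodic case (via Lemma~\ref{lem:PeriodicCuR}) and justify the existence of arbitrarily long bispecial factors through the return-word construction, two points the paper's proof leaves implicit.
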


\begin{proof}
Since $\L(\varphi) = \L(\varphi^k) $,  we can  without loss of
generality assume that  $\varphi$ is well-marked. We  exploit
Proposition \ref{generatingBispecial}.  Consider a bispecial factor
$u$ which contains all letters from the alphabet and which is longer
than any initial bispecial factor on the list $u^{(1)},
\ldots,u^{(N)} $.  The same proposition says  that the factor
$\Phi(u) = \varphi_R(u)w$ is bispecial.  Since $\L(\varphi)$ is
closed under reversal, the factor $\widetilde{\varphi_R(u)w}$ is
bispecial, too. By Proposition \ref{generatingBispecial},  there
exists a bispecial factor $v$ such that $\widetilde{\varphi_R(u)w} =
\Phi(v) = \varphi_R(v)w$. Lemma \ref{lem:xpalindrome} forces
$u=\widetilde{v}$ and $\widetilde{\varphi_R}=\varphi_L$.
Thus, $\varphi$ has a conjugate in class $\P$.
\end{proof}







\section{Comments and open questions}

In our article we focused exclusively  on infinite words generated by marked morphisms. Moreover, we studied only  Version 1 of HKS conjecture. Let us  comment  some problems  we did not touch.
\begin{itemize}

\item  We believe that Version 1 of HKS conjecture can be
proved for a larger class of primitive morphisms and not only for morphisms
having a well-marked power.  For example, consider a   word  coding a
three interval exchange transformation $T$ under permutation (321). Such word  is palindromic.  If the transformation $T$ satisfies the so-called
infinite distinct orbit condition  (i.d.o.c.) introduced by Keane in \cite{MR0357739}, then a primitive morphism fixing  a coding of $T$ cannot be marked.

Is  some  power of this   morphism conjugate to
a morphism from  class $\P$?

 The counterexample to Version 1 of  HKS conjecture constructed by the
first author is an infinite word over a ternary alphabet. In fact
this word - say $\bu$ -  is the coding of a three interval
exchange transformation $T$ with permutation $(321)$.   But it does not
satisfy i.d.o.c. It means that the factor complexity
$\mathcal{C}$ of $\bu$ is bounded by $\mathcal{C}(n)\leq n+ K$
for some constant $K$. Such word is usually referred to as a
degenerate 3iet word and it is just a morphic image of a sturmian
word.


\item On  the  binary alphabet one may consider  besides reversal mapping
also  the mapping $E$ defined by $E(w_1w_2\cdots w_n) =
(1-w_n) (1-w_{n-1}) \cdots (1-w_1)$.   Obviously, the mapping $E$ is an
involutive antimorphism, i.e.,  $E^2 = {\rm Id}$ and $E(uv) = E(v)E(u)$
for all  words $u,v \in \{0,1\}^*$.

 Let
us look at  the Thue-Morse morphism  $\varphi_{TM}: \, 0\mapsto 01$, $1\mapsto 10$.  The second iteration $\varphi_{TM}^2: \, 0\mapsto
0110$, $1\mapsto 1001$ belongs to  class $\P$ and thus
the language $\mathcal{L}(\varphi_{TM})$   contains infinitely many
palindromes. On the other
hand, the language $\mathcal{L}(\varphi_{TM})$  is closed under
involutive antimorphism $E$  as well  and  it contains infinitely
many $E$-palindromes, i.e.,  factors  $w$ satisfying $E(w)=w$. In
fact, images of letters $\varphi_{TM}(0) = 01$ and
$\varphi_{TM}(1) = 10$ are both  $E$-palindromes.  The question is:
What is an $E$-analogy of class $\P$ and HKS conjecture?

\item As we have already mentioned,  Harju, Vesti and Zamboni verified  Version 3 of HKS conjecture for  words with finite defect.     Words coding symmetric exchange
transformation and also Arnoux-Rauzy words belong to most
prominent examples of  words with defect zero.
Of the same interest is the opposite  question: Which
morphisms from  class $\P$ have a fixed point with finite defect?

In this context we have to mention the conjecture stated in the last chapter
of the article \cite{MR2566171}.

{\bf Conjecture}: {\it Let $\bu$  be a fixed point of a primitive morphism
$\varphi$. If the defect of $\bu$ is finite but nonzero,  then $\bu$ is
periodic.}

In \cite{bucci_vaslet_2012}, Bucci and Vaslet provided a morphism $\varphi$
over a ternary alphabet which  contradicts this conjecture. But their morphism
$\varphi$ is not injective.       Therefore, the validity of Conjecture is
still open for morphisms over a binary alphabet or injective morphisms.
It can be deduced from
\cite{MR2566171}, that the conjecture is true if $\varphi$ is a uniform marked
morphism.

\item The main open problem remains to prove validity of Version 2 of HKS conjecture  or at least validity of its relaxed Version 3.

\end{itemize}

\section*{Acknowledgements}

We are grateful to the anonymous referees for their many valuable comments. 
The first author is supported by NSERC postdoctoral fellowship (Canada).
The second author acknowledges support of GA\v CR 13-03538S (Czech Republic).


\bibliographystyle{plain} 
\bibliography{biblio}

\begin{thebibliography}{10}

\bibitem{MR1964623}
J.-P. Allouche, M.~Baake, J.~Cassaigne, and D.~Damanik.
\newblock Palindrome complexity.
\newblock {\em Theoret. Comput. Sci.}, 292(1):9--31, 2003.

\bibitem{berstel_infinite_2009}
J.~Berstel, L.~Boasson, O.~Carton, and I.~Fagnot.
\newblock Infinite words without palindrome.
\newblock {\em CoRR}, abs/0903.2382, 2009.
\newblock \url{http://arxiv.org/abs/0903.2382}.

\bibitem{blondin_defaut_2008}
A.~{Blondin Mass\'e}.
\newblock Sur le d\'efaut palindromique des mots infinis.
\newblock M\'emoire, Universit\'e du Qu\'ebec \`a Montr\'eal, 2008.

\bibitem{MR2566171}
A.~Blondin~Mass{\'e}, S.~Brlek, A.~Garon, and S.~Labb{\'e}.
\newblock Combinatorial properties of {$f$}-palindromes in the {T}hue-{M}orse
  sequence.
\newblock {\em Pure Math. Appl. (PU.M.A.)}, 19(2-3):39--52, 2008.

\bibitem{MR2830256}
A.~Blondin~Mass{\'e}, S.~Brlek, A.~Garon, and S.~Labb{\'e}.
\newblock Equations on palindromes and circular words.
\newblock {\em Theoret. Comput. Sci.}, 412(27):2922--2930, 2011.

\bibitem{blondin_palindromic_2008}
A.~{B}londin Mass{\'e}, S.~{B}rlek, and S.~Labb{\'e}.
\newblock Palindromic lacunas of the {T}hue-{M}orse word.
\newblock In {\em Proc.~GASCom 2008}, pages 53--67, (16-20 June 2008, Bibbiena,
  Arezzo-Italia), 2008.

\bibitem{MR2071459}
S.~Brlek, S.~Hamel, M.~Nivat, and C.~Reutenauer.
\newblock On the palindromic complexity of infinite words.
\newblock {\em Internat. J. Found. Comput. Sci.}, 15(2):293--306, 2004.

\bibitem{bucci_vaslet_2012}
M.~Bucci and E.~Vaslet.
\newblock Palindromic defect of pure morphic words.
\newblock {\em Internal Proceedings of the 14th Mons Days of Theoretical
  Computer Science}, 2012.

\bibitem{MR1734902}
A.~Frid.
\newblock Applying a uniform marked morphism to a word.
\newblock {\em Discrete Math. Theor. Comput. Sci.}, 3(3):125--139 (electronic),
  1999.

\bibitem{MR1650675}
A.~E. Frid.
\newblock On uniform {D}0{L} words.
\newblock In {\em S{TACS} 98 ({P}aris, 1998)}, volume 1373 of {\em Lecture
  Notes in Comput. Sci.}, pages 544--554. Springer, Berlin, 1998.

\bibitem{harju_remark_2013}
T.~Harju, J.~Vesti, and L.~Q. Zamboni.
\newblock On a remark of {H}of, {K}nill and {S}imon on palindromic substitutive
  systems.
\newblock {arXiv} e-print 1311.0185, November 2013.

\bibitem{MR1372804}
A.~Hof, O.~Knill, and B.~Simon.
\newblock Singular continuous spectrum for palindromic {S}chr\"odinger
  operators.
\newblock {\em Comm. Math. Phys.}, 174(1):149--159, 1995.

\bibitem{MR0357739}
M.~Keane.
\newblock Interval exchange transformations.
\newblock {\em Math. Z.}, 141:25--31, 1975.

\bibitem{MR2928192}
K.~Klouda.
\newblock Bispecial factors in circular non-pushy {D}0{L} languages.
\newblock {\em Theoret. Comput. Sci.}, 445:63--74, 2012.

\bibitem{MR1035708}
D.~K{\"o}nig.
\newblock {\em Theory of finite and infinite graphs}.
\newblock Birkh\"auser Boston, Inc., Boston, MA, 1990.

\bibitem{labbe_proprietes_2008}
S.~Labb\'e.
\newblock Propri\'et\'es combinatoires des $f$-palindromes.
\newblock M\'emoire, Universit\'e du Qu\'ebec \`a Montr\'eal, 2008.
\newblock M10615.

\bibitem{labbe_counterexample_2014}
S.~Labb{\'e}.
\newblock A counterexample to a question of {H}of, {K}nill and {S}imon.
\newblock {\em The Electronic Journal of Combinatorics}, 21(3):\#P3.11, 2014.

\bibitem{MR675953}
M.~Lothaire.
\newblock {\em Combinatorics on words}, volume~17 of {\em Encyclopedia of
  Mathematics and its Applications}.
\newblock Addison-Wesley Publishing Co., Reading, Mass., 1983.

\bibitem{MR1475463}
M.~Lothaire.
\newblock {\em Combinatorics on words}.
\newblock Cambridge Mathematical Library. Cambridge University Press,
  Cambridge, 1997.

\bibitem{MR1905123}
M.~Lothaire.
\newblock {\em Algebraic combinatorics on words}, volume~90 of {\em
  Encyclopedia of Mathematics and its Applications}.
\newblock Cambridge University Press, Cambridge, 2002.

\bibitem{OEIS}
N.~J.~A. Sloane.
\newblock The {O}n-{L}ine {E}ncyclopedia of {I}nteger {S}equences, published
  electronically at \url{http://oeis.org}, 2010.

\bibitem{MR2363365}
B.~Tan.
\newblock Mirror substitutions and palindromic sequences.
\newblock {\em Theoret. Comput. Sci.}, 389(1-2):118--124, 2007.

\end{thebibliography}

\end{document}